\newtheorem{theorem}{Theorem}
\newtheorem{lemma}{Lemma}
\newtheorem{proposition}{Proposition}
\newtheorem{corollary}{Corollary}
\theoremstyle{remark}
\newtheorem{remark}{Remark}
\newtheorem{definition}{Definition}
\newtheorem{example}{Example}
\let\Pr\relax
\DeclareMathOperator\Pr{\mathbb{P}}
\DeclareMathOperator\Qr{\mathbb{Q}}
\DeclareMathOperator\E{\mathbb{E}}
\DeclareMathOperator\Ber{Bernoulli}
\newcommand{\KL}[2]{ { D \left({#1} \;\middle\Vert\; {#2}\right) } }
\newcommand{\KLr}[2]{ { D \left({#1} \;\middle\Vert\; {#2}\right) } }
\newcommand{\KLd}[2]{ { \dot{D} \left({#1} \;\middle\Vert\; {#2}\right) } }
\newcommand{\KLrd}[2]{ { \dot{D} \left({#1} \;\middle\Vert\; {#2}\right) } }
\newcommand{\KLb}[2]{ { D_2 \left({#1} \;\middle\Vert\; {#2}\right) } }
\newcommand{\btheta}{{\pmb{\theta}}}
\newcommand{\blambda}{{\pmb{\lambda}}}
\newcommand{\bphi}{{\pmb{\phi}}}
\newcommand{\Prtheta}{\Pr_\btheta^\bphi}
\newcommand{\Etheta}{\E_\btheta^\bphi}
\newcommand{\Prlambda}{\Pr_\blambda^\bphi}
\newcommand{\Elambda}{\E_\blambda^\bphi}
\newcommand{\calE}{{\mathcal E}}
\newcommand{\calF}{{\mathcal F}}
\newcommand{\calM}{{\mathcal M}}
\newcommand{\calX}{{\mathcal X}}
\newcommand{\Int}[1]{ { {\mathbb Z}^{#1} } }
\newcommand{\Intp}[1]{ { {\mathbb Z_{\ge 0}}^{#1} } }
\newcommand{\Intpp}[1]{ { {\mathbb Z_{> 0}}^{#1} } }
\newcommand{\Real}[1]{ { {\mathbb R}^{#1} } }
\newcommand{\Realp}[1]{ { {\mathbb R}_{\ge 0}^{#1} } }
\newcommand{\dss}{\displaystyle}
\newcommand{\inv}{^{-1}}
\DeclareMathOperator*{\argmin}{arg\,min}
\title{\textbf{Finite-Time Analysis of Round-Robin Kullback-Leibler Upper Confidence Bounds for Optimal Adaptive Allocation with Multiple Plays and Markovian Rewards}}
\author{
Vrettos Moulos
\thanks{Supported in part by the NSF grant CCF-1816861.}
\\
University of California Berkeley \\
\href{mailto:vrettos@berkeley.edu}{vrettos@berkeley.edu}
}
\date{}
\begin{document}

\maketitle

\begin{abstract}
We study an extension of the classic stochastic multi-armed bandit problem which involves multiple plays and Markovian rewards in the rested bandits setting.
In order to tackle this problem we consider an adaptive allocation rule which at each stage combines the information from the sample means of all the arms,
with the Kullback-Leibler upper confidence bound of a single arm which is selected in round-robin way.
For rewards generated from a one-parameter exponential family of Markov chains, we provide a finite-time upper bound for the regret incurred from this adaptive allocation rule, which reveals the logarithmic dependence of the regret on the time horizon, and which is asymptotically optimal.
For our analysis we devise several concentration results for Markov chains,
including a maximal inequality for Markov chains,
that may be of interest in their own right.
As a byproduct of our analysis we also establish asymptotically optimal, finite-time guarantees for the case of multiple plays, and i.i.d. rewards drawn from a one-parameter exponential family of probability densities.
Additionally, we provide simulation results that illustrate that calculating Kullback-Leibler upper confidence bounds in a round-robin way, is significantly more efficient than calculating them for every arm at each round, and that the expected regrets of those two approaches behave similarly.
\end{abstract}

\section{Introduction}

In this paper we study a generalization of the stochastic multi-armed bandit problem,
where there are $K$ independent arms, and each arm $a \in [K] = \{1, \ldots, K\}$ is associated with a parameter $\theta_a \in \Real{}$,
and modeled as a discrete time stochastic process governed by the probability law $\Pr_{\theta_a}$.
A time horizon $T$ is prescribed, and at each round $t \in [T] = \{1, \ldots, T\}$ we select $M$ arms, where $1 \le M \le K$, without any prior knowledge of the statistics of the underlying stochastic processes.
The $M$ stochastic processes that correspond to the selected arms evolve by one time step, and we observe this evolution through a reward function, while the stochastic processes for the rest of the arms stay frozen, i.e. we consider the rested bandits setting.
Our goal is to select arms in such a way so as to make the cumulative reward over the whole time horizon $T$ as large as possible.
For this task we are faced with an exploitation versus exploration dilemma.
At each round we need to decide whether we are going to exploit the best $M$ arms according to the information that we have gathered so far, or we are going to explore some other arms which do not seem to be so rewarding, just in case that the rewards we have observed so far deviate significantly from the expected rewards.
The answer to this dilemma is usually coming by calculating indices for the arms and ranking them according to those indices,
which should incorporate both information on how good an arm seems to be as well as on how many times it has been played so far.
Here we take an alternative approach where instead of calculating the indices for all the arms at each round, we just calculate the index for a single arm in a round-robin way.

\subsection{Contributions}

\begin{enumerate}
    \item
We first consider the case that the $K$ stochastic processes are irreducible Markov chains, coming from a one-parameter exponential family of Markov chains.
The objective is to play as much as possible the $M$ arms with the largest stationary means, although we have no prior information about the statistics of the $K$ Markov chains.
The difference of the best possible expected rewards  coming from those $M$ best arms and the expected reward coming from the arms that we played is the regret that we incur.
To minimize the regret we consider an index based adaptive allocation rule,~\autoref{alg:KL-UCB},
which is based on sample means and Kullback-Leibler upper confidence bounds for the stationary expected rewards using the Kullback-Leibler divergence rate.
We provide a finite-time analysis,~\autoref{thm:main}, for this KL-UCB adaptive allocation rule which shows that the regret depends logarithmically on the time horizon $T$, 
and matches exactly the asymptotic lower bound,~\autoref{cor:optimal}.

    \item In order to make the finite-time guarantee possible we devise several deviation lemmata for Markov chains. An exponential martingale for Markov chains is proven,~\autoref{lem:martingale},
    which leads to a maximal inequality for Markov chains,~\autoref{lem:maximal}. In the literature there are several approaches that use martingale techniques 
    either to derive Hoeffding inequalities for Markov chains~\cite{Glynn-Ormoneir-02,Moulos-Hoeffding-20},
    or more generally to study concentration of measure for  
    Markov chains~\cite{Marton-96-a,Marton-96-b,Marton-98,Samson-00,Marton-03,Chazottes-Collet-Kulske-Redig-07,Kontorovich-Ramanan-08,Paulin15}.
    Nonetheless, they're all based either on Dynkin's martingale or on Doob's martingale, combined with coupling ideas,
    and there is no evidence that they can lead to maximal inequalities. Moreover, a Chernoff bound for Markov chains is devised,~\autoref{lem:Chernoff}, and its relation with the work of~\cite{Moulos-Ananth-19} is discussed in~\autoref{rmk:Chernoff-comparison}.
    
    \item We then consider the case that the $K$ stochastic processes are i.i.d. processes, each corresponding to a density coming from a one-parameter exponential family of densities. We establish,~\autoref{thm:main-IID}, that~\autoref{alg:KL-UCB} still enjoys the same finite-time regret guarantees, which are asymptotically optimal. 
    The case where~\autoref{thm:main-IID} follows directly from~\autoref{thm:main} is discussed in~\autoref{rmk:IID-Markov}.
    The setting of single plays is studied in~\cite{Cappe-Garivier-Maillard-Munos-Stoltz-13}, but with a much more computationally intense adaptive allocation rule.
    
    \item In~\autoref{sec:simulations} we provide simulation results illustrating the fact that round-robin KL-UCB adaptive allocation rules are much more computationally efficient than
    KL-UCB adaptive allocation rules, and similarly round-robin UCB adaptive allocation rules are more computationally efficient than
    UCB adaptive allocation rules, while the expected regrets, in each family of algorithms, behave in a similar way.
    This brings to light round-robin schemes as an appealing practical alternative to the mainstream schemes that calculate indices for all the arms at each round.
    
\end{enumerate}

\subsection{Motivation}

Multi-armed bandits provide a simple abstract statistical model that can be applied to study real world problems such as clinical trials, ad placement,
gambling, adaptive routing, resource allocation in computer systems etc.
We refer the interested reader to the survey of~\cite{BC12} for more context, and to the recent books of~\cite{Lat-Szep-19,Slivkins-19}. The need for multiple plays can be understood in the setting of resource allocation. Scheduling jobs to a single CPU is an instance of the multi-armed bandit problem with a single play at each round, where the arms correspond to the jobs.
If there are multiple CPUs we get an instance of the multi-armed bandit problem with multiple plays.
The need of a richer model which allows the presence of Markovian dependence is illustrated in the context of gambling, where the arms correspond to slot-machines.
It is reasonable to try to model the assertion that if a slot-machine
produced a high reward the $n$-th time played, then it is very likely that it will produce a much lower reward the $(n+1)$-th time played, simply because the casino may decide to change the reward distribution to a much stingier one if a big reward was just produced. This assertion requires, the reward distributions to depend on the previous outcome, which is precisely captured by the Markovian reward model. 
Moreover, we anticipate this to be an important problem attempting to bridge classical stochastic bandits, controlled Markov chains (MDPs), and non-stationary bandits.

\subsection{Related Work}

The cornerstone of the multi-armed bandits literature is the pioneering work of~\cite{Lai-Robbins-85}, which studies the problem for the case of i.i.d. rewards and single plays.~\cite{Lai-Robbins-85} introduce the change of measure argument to derive a lower bound for the problem, as well as round robin adaptive allocation rules based on upper confidence bounds which are proven to be asymptotically optimal.~\cite{Ananth-Varaiya-Walrand-I-87} extend the results of~\cite{Lai-Robbins-85} to the case of i.i.d. rewards and multiple plays, while~\cite{Agrawal-95} considers index based allocation rules which are only based on sample means and are computationally simpler, although they may not be asymptotically optimal. 
The work of~\cite{Agrawal-95} inspired the
first finite-time analysis for the adaptive allocation rule called UCB
by~\cite{Auer-Cesa-Fischer-02}, which is though asymptotically suboptimal.
The works of~\cite{Cappe-Garivier-Maillard-Munos-Stoltz-13,Garivier-Cappe-11, Maillard-Munos-Stoltz-11} bridge this gap by providing the KL-UCB adaptive allocation rule, with finite-time guarantees which are asymptotically optimal.
Additionally,~\cite{KHN-15} study a Thompson sampling algorithm for multiple plays and binary rewards, and they establish a  finite-time analysis which is asymptotically optimal. Here we close the problem of multiple plays and rewards coming from an exponential family of probability densities by showing finite-time guarantees which are asymptotically optimal, via adaptive allocation rules which are much more efficiently computable than their precursors.

The study of Markovian rewards and multiple plays in the rested setting, is initiated in the work of~\cite{Ananth-Varaiya-Walrand-II-87}.
They report an asymptotic lower bound, as well as a round robin upper confidence bound adaptive allocation rule which is proven to be asymptotically optimal.
However, it is unclear if the statistics that they use in order to derive the upper confidence bounds, in their Theorem 4.1, can be recursively computed, and the practical applicability of their results is therefore questionable.
In addition, they don't provide any finite-time analysis, and they use a different type of assumption on their one-parameter family of Markov chains. In particular, they assume that their one-parameter family of transition probability matrices is log-concave in the parameter,
equation (4.1) in~\cite{Ananth-Varaiya-Walrand-II-87}, while we assume that it is a one-parameter exponential family of transition probability matrices.
\cite{Tekin-Liu-10,Tekin-Liu-12} extend the UCB adaptive allocation rule of~\cite{Auer-Cesa-Fischer-02}, to the case of Markovian rewards and multiple plays. They provide a finite-time analysis, but their regret bounds are suboptimal. Moreover they impose a different type of assumption on their configuration of Markov chains. They assume that the transition probability matrices are reversible, so that they can apply Hoeffding bounds for Markov chains from~\cite{Gillman93, Lezaud98}.
In a recent work~\cite{Moulos-Hoeffding-20} developed a Hoeffding bound for Markov chains, which does not assume any conditions other than irreducibility, and using this he extended the analysis of UCB to an even broader class of Markov chains.
One of our main contributions is to bridge this gap and provide a KL-UCB adaptive allocation rule, with a finite-time guarantee which is asymptotically optimal. In a different line of work~\cite{ORAM-12, Tekin-Liu-12} consider the restless bandits Markovian reward model, in which the state of each arm evolves according to a Markov chain independently of the player's action. Thus in the restless setting the state that we next observe is now dependent on the amount
of time that elapses between two plays of the same arm.
\section{Problem Formulation}

\subsection{One-Parameter Family of Markov Chains}\label{sec:general-family}

We consider a one-parameter family of irreducible Markov chains on
a finite state space $S$.
Each member of the family is indexed by a parameter
$\theta \in \Real{}$,
and is characterized by an initial distribution
$q_\theta = [q_\theta (x)]_{x \in S}$,
and an irreducible transition probability matrix
$P_\theta = [P_\theta (x,y)]_{x, y \in S}$,
which give rise to a probability law $\Pr_\theta$.
There are $K \ge 2$ arms,
with overall parameter configuration
$\btheta = (\theta_1, \ldots, \theta_K) \in \Real{}^K$,
and each arm $a \in [K] = \{1, \ldots, K\}$ evolves internally
as the Markov chain with parameter $\theta_a$ which we denote by
$\{X_n^a\}_{n \in \Intp{}}$.
There is a common noncostant real-valued reward function on the state space
$f : S \to \Real{}$, and successive plays of arm $a$
result in observing samples from the stochastic process
$\{Y_n^a\}_{n \in \Intp{}}$, where $Y_n^a = f (X_n^a)$. 
In other words, the distribution of the rewards coming from arm $a$ is a function of the Markov chain with parameter $\theta_a$,
and thus it can have more complicated dependencies.
As a special case, if we pick the reward function $f$ to be injective,
then the distribution of the rewards is Markovian.

For $\theta \in \Real{}$, due to irreducibility,
there exists a unique stationary distribution for the transition probability matrix $P_\theta$ which we denote with
$\pi_\theta = [\pi_\theta (x)]_{x \in S}$.
Furthermore, let $\mu (\theta) = \sum_{x \in S} f (x) \pi_\theta (x)$ be the stationary mean reward corresponding to the Markov chain parametrized by $\theta$. Without loss of generality we may assume that the $K$ arms are ordered so that,
\[
\mu (\theta_1) \ge \ldots \ge \mu (\theta_N) > 
\mu (\theta_{N+1}) \ldots = \mu (\theta_M) = \ldots = \mu (\theta_L) >
\mu (\theta_{L+1}) \ge \ldots \ge \mu (\theta_K),
\]
for some $N \in \{0, \ldots, M-1\}$ and $L \in \{M, \ldots, K\}$,
where $N = 0$ means that $\mu (\theta_1) = \ldots = \mu (\theta_M),~
L = K$ means that $\mu (\theta_M) = \ldots = \mu (\theta_K)$,
and we set $\mu (\theta_0) = \infty$ and $\mu (\theta_{K+1}) = - \infty$.

\subsection{Regret Minimization}

We fix a time horizon $T$, and at each round $t \in [T] = \{1, \ldots, T\}$ we play a set $\phi_t$ of $M$ distinct arms, where $1 \le M \le K$ is the same through out the rounds,
and we observe rewards $\{Z_t^a\}_{a \in [K]}$ given by,
\[
Z_t^a =
\begin{cases}
Y_{N_a (t)}^a, & \text{if}~ a \in \phi_t \\
0, & \text{if}~ a \not\in \phi_t,
\end{cases}
\]
where $N^a (t) = \sum_{s=1}^t I \{a \in \phi_s\}$ is the number of times we played arm $a$ up to time $t$.
Using the stopping times $\tau_n^a = \inf \{t \ge 1 : N^a (t) = n\}$,
we can also reconstruct the $\{Y_n^a\}_{n \in \Intpp{}}$ process,
from the observed $\{Z_t^a\}_{t \in \Intpp{}}$ process, via the identity $Y_n^a = Z_{\tau_n^a}^a$.
Our play $\phi_t$ is based on the information that we have accumulated so far.
In other words, the event $\{\phi_t = A\}$, for $A \subseteq [K]$ with $|A| = M$, belongs to the $\sigma$-field generated by
$\phi_1, \{Z_1^a\}_{a \in [K]}, \ldots, \phi_{t-1}, \{Z_{t-1}^a\}_{a \in [K]}$. We call the sequence $\bphi = \{\phi_t\}_{t \in \Int{}_{>0}}$ of our plays an \emph{adaptive allocation rule}. Our goal is to come up with an adaptive allocation rule $\bphi$,
that achieves the greatest possible expected value for the sum of the rewards,
\[
S_T
= \sum_{t=1}^T \sum_{a \in [K]} Z_t^a
= \sum_{a \in [K]} \sum_{n=1}^{N^a (T)} Y_n^a,
\]
which is equivalent to minimizing the expected regret,
\begin{equation}\label{eqn:regret}
R_\btheta^\bphi (T) =
T \sum_{a=1}^M \mu (\theta_a) - \E_\btheta^\bphi [S_T].
\end{equation}

\subsection{Asymptotic Lower Bound}

A quantity that naturally arises in the study of regret minimization for Markovian bandits is the \emph{Kullback-Leibler divergence rate} between
two Markov chains, which is a generalization of the usual Kullback-Leibler divergence between two probability distributions.
We denote by $\KLr{\theta}{\lambda}$ the Kullback-Leibler divergence rate between
the Markov chain with parameter $\theta$ and the Markov chain with parameter $\lambda$, which is given by,
\begin{equation}\label{eqn:KLr}
\KLr{\theta}{\lambda} = 
\sum_{x, y \in S} \log \frac{P_\theta (x, y)}{P_\lambda (x, y)}
\pi_\theta (x) P_\theta (x, y),
\end{equation}
where we use the standard notational conventions $\log 0 = \infty, ~ \log \frac{\alpha}{0} = \infty ~ \text{if} ~ \alpha > 0$,
and $0 \log 0 = 0 \log \frac{0}{0} = 0$. Indeed note that, if
$P_\theta (x, \cdot) = p_\theta (\cdot)$ and $P_\lambda (x, \cdot) = p_\lambda (\cdot)$, for all $x \in S$, i.e. in the special case that the Markov chains correspond to IID processes, then
the Kullback-Leibler divergence rate $\KLr{\theta}{\lambda}$
is equal to the Kullback-Leibler divergence $\KL{p_\theta}{p_\lambda}$ between $p_\theta$ and $p_\lambda$,
\[
\KLr{\theta}{\lambda} =
\sum_{x, y \in S} \log \frac{p_\theta (y)}{p_\lambda (y)}
p_\theta (x) p_\theta (y) =
\sum_{y \in S} \log \frac{p_\theta (y)}{p_\theta (y)} p_\theta (y) =
\KL{p_\theta}{p_\lambda}.
\]

Under some regularity assumptions on the one-parameter family of Markov chains,~\cite{Ananth-Varaiya-Walrand-II-87} in their Theorem 3.1 are able to establish the following asymptotic lower bound on the expected regret for any adaptive allocation rule $\bphi$ which is uniformly good across all parameter configurations,
\begin{equation}\label{eqn:lower-bound}
\liminf_{T \to \infty} \frac{R_\btheta^\bphi (T)}{\log T}
\ge
\sum_{b=L+1}^K
\frac{\mu (\theta_M) -\mu (\theta_b)}
{\KLr{\theta_b}{\theta_M}}.
\end{equation}
A further discussion of this lower bound, as well as an alternative derivation can be found in~\autoref{a:lower-bound},

The main goal of this work is to derive a finite time analysis for an adaptive allocation rule which is based on Kullback-Leibler divergence rate indices, that is asymptotically optimal. We do so for the one-parameter exponential family of Markov chains, which forms a generalization of the classic one-parameter exponential family generated by a probability distribution with finite support.

\subsection{One-Parameter Exponential Family Of Markov Chains}\label{sec:exp-fam}

Let $S$ be a finite state space, $f : S \to \Real{}$ be a nonconstant reward function on the state space, and $P$ an irreducible transition probability matrix on $S$, with associated stationary distribution $\pi$.
$P$ will serve as the generator stochastic matrix of the family.
Let $\mu (0) = \sum_{x \in S} f (x) \pi (x)$ be the stationary mean of the Markov chain induced by $P$ when $f$ is applied.
By tilting exponentially the transitions of $P$ we are able to construct
new transition matrices that realize a whole range of stationary means around $\mu (0)$ and form the exponential family of stochastic matrices.
Let $\theta \in \Real{}$, and consider the matrix
$\Tilde{P}_\theta (x, y) = P (x, y) e^{\theta f (y)}$.
Denote by $\rho (\theta)$ its spectral radius. 
According to the Perron-Frobenius theory, 
see Theorem 8.4.4 in the book of~\cite{HJ13}, 
$\rho (\theta)$ is a simple eigenvalue of
$\Tilde{P}_\theta$, called the Perron-Frobenius eigenvalue, and we can associate to it unique left $u_\theta$ and right $v_\theta$ eigenvectors such that they are both positive, $\sum_{x \in S} u_\theta (x) = 1$ and $\sum_{x \in S} u_\theta (x) v_\theta (x) = 1$.
Using them we define the member of the exponential family which corresponds to the natural parameter $\theta$ as,
\begin{equation}\label{eqn:exp-fam-MC}
P_\theta (x, y) =
\frac{v_\theta (y)}{v_\theta (x)}
\exp\left\{\theta f (y) - \Lambda (\theta)\right\}
P (x, y),
\end{equation}
where $\Lambda (\theta) = \log \rho (\theta)$ is the log-Perron-Frobenius eigenvalue.
It can be easily seen that $P_\theta (x, y)$ is indeed a stochastic matrix, and its stationary distribution is given by $\pi_\theta (x) = u_\theta (x) v_\theta (x)$.
The initial distribution $q_\theta$ associated to the parameter $\theta$, can be any distribution on $S$, since the KL-UCB adaptive allocation rule that we devise, and its guarantees, will be valid no matter the initial distributions.

\begin{example}[Two-state chain]\label{e:example}
Let $S = \{0, 1\}$, and consider the transition probability matrix, $P$, representing two coin-flips, $\Ber (p)$ when we're in state $0$, and $\Ber (q)$ when we're in state $1$.
We require that $P$ is irreducible, so $p \in (0, 1]$ and $q \in [0, 1)$.
\[
P = 
\begin{bmatrix}
1-p & p \\
1-q & q
\end{bmatrix}
\]
The exponential family of transition probability matrices generated by $P$ and $f (x) = 2x-1$ is given by,
\[
P_\theta = 
\frac{1}{\rho (\theta)}
\begin{bmatrix}
(1-p)e^{-\theta} & \rho (\theta) - (1-p)e^{-\theta} \\
\rho (\theta) - q e^\theta & q e^\theta
\end{bmatrix},
\]
where,
\[
\rho (\theta) = \frac{(1-p)e^{-\theta} + qe^\theta + \sqrt{((1-p)e^{-\theta} - qe^\theta)^2 + 4p(1-q)}}{2}.
\]
In the special case that $p = q$, we get back the typical exponential family of $\Ber (p_\theta)$ coin-flips, with
\[
1 - p_\theta =
\frac{(1-p)e^{-\theta}}{(1-p)e^{-\theta} + pe^\theta}.
\]
\end{example}

Exponential families of Markov chains date back to the work of~\cite{Miller61}. 
For a short overview of one-parameter exponential families of Markov chains, as well as proofs of the following properties, we refer the reader to Section 2 in~\cite{Moulos-Ananth-19}.
The log-Perron-Frobenius eigenvalue $\Lambda (\theta)$ is a convex analytic function on the real numbers, and through its derivative, $\dot{\Lambda} (\theta)$, we obtain the stationary mean $\mu (\theta)$
of the Markov chain with transition matrix $P_\theta$ when $f$ is applied, i.e. $\dot{\Lambda} (\theta) = \mu (\theta) = \sum_{x \in S} f (x) \pi_\theta (x)$.
When $\Lambda (\theta)$ is not the linear function $\theta \mapsto \mu (0) \theta$, the log-Perron-Frobenius eigenvalue, $\Lambda (\theta)$, is strictly convex and thus its derivative $\dot{\Lambda} (\theta)$ is strictly increasing, and it forms a bijection
between the natural parameter space, $\Real{}$, and the mean parameter
space, $\calM = \dot{\Lambda} (\Real{})$,
which is a bounded open interval.

The Kullback-Leibler divergence rate from~\eqref{eqn:KLr},
when instantiated for the exponential family of Markov chains, can be expressed as,
\[
\KLr{\theta}{\lambda} =
\Lambda (\lambda) - \Lambda (\theta) - \dot{\Lambda} (\theta) (\lambda - \theta),
\]
which is convex and differentiable over $\Real{} \times \Real{}$.
Since $\dot{\Lambda} : \Real{} \to \calM$ forms a bijection from the natural parameter space, $\Real{}$, to the mean parameter space, $\calM$,
with some abuse of notation we will write
$\KLr{\mu}{\nu}$ for $\KLr{\dot{\Lambda}\inv (\mu)}{\dot{\Lambda}\inv (\nu)}$,
where $\mu, \nu \in \calM$.
Furthermore, $\KLr{\cdot}{\cdot} : \calM \times \calM \to \Real{}_{\ge 0}$ can be extended continuously, to a function
$\KLr{\cdot}{\cdot} : \Bar{\calM} \times \bar{\calM} \to \Real{}_{\ge 0} \cup \{\infty\}$, where $\Bar{\calM}$ denotes the closure of $\calM$.
This can even further be extended to a convex function on $\Real{} \times \Real{}$, by setting $\KLr{\mu}{\nu} = \infty$ if $\mu \not\in \bar{\calM}$ or $\nu \not\in \Bar{\calM}$.
For fixed $\nu \in \Real{}$, the function $\mu \mapsto \KLr{\mu}{\nu}$
is decreasing for $\mu \le \nu$ and increasing for $\mu \ge \nu$.
Similarly, for fixed $\mu \in \Real{}$, the function $\nu \mapsto \KLr{\mu}{\nu}$
is decreasing for $\nu \le \mu$ and increasing for $\nu \ge \mu$.
\section{A Maximal Inequality for Markov Chains}\label{sec:concentration}

Here we present an exponential martingale for Markov chains, which in turn leads to a maximal inequality.
For proofs, and a Chernoff bound for Markov chains we refer the interested reader to~\autoref{a:concentration}.

\begin{lemma}[Exponential martingale for Markov chains]\label{lem:martingale}
Let $\{X_n\}_{n \in \Int{}_{\ge 0}}$ be a Markov chain
over the finite state space $S$
with an irreducible transition matrix $P$ and initial distribution $q$.
Let $f : S \to \Real{}$ be a nonconstant real-valued function on the state space.
Fix $\theta \in \Real{}$ and define,
\begin{equation}\label{eqn:martingale}
M_n^\theta =
\frac{v_\theta (X_n)}{v_\theta (X_0)}
\exp\left\{\theta (f (X_1) + \ldots + f (X_n)) - n \Lambda (\theta)\right\}.
\end{equation}
Then $\{M_n^\theta\}_{n \in \Intpp{}}$ is a martingale with respect to the filtration $\{\calF_n\}_{n \in \Intpp{}}$, where $\calF_n$
is the $\sigma$-field generated by $X_0, \ldots, X_n$.
\end{lemma}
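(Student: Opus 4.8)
The plan is to verify the three defining properties of a martingale --- adaptedness, integrability, and the one-step conditional expectation identity --- with only the last being substantive. Adaptedness is immediate, since $M_n^\theta$ is a deterministic function of $X_0, \ldots, X_n$ and hence $\calF_n$-measurable. Integrability follows at once from the finiteness of $S$: the Perron--Frobenius eigenvector $v_\theta$ is strictly positive on the finite set $S$, so both $v_\theta$ and $1/v_\theta$ are bounded, and $f$ is bounded; thus $M_n^\theta$ is a bounded random variable for each fixed $n$.

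For the martingale identity, I would first record the multiplicative one-step recursion obtained by comparing the definitions of $M_{n+1}^\theta$ and $M_n^\theta$:
\[
M_{n+1}^\theta = M_n^\theta \cdot \frac{v_\theta(X_{n+1})}{v_\theta(X_n)} \exp\left\{\theta f(X_{n+1}) - \Lambda(\theta)\right\}.
\]
Since $M_n^\theta$ and $v_\theta(X_n)$ are $\calF_n$-measurable, taking the conditional expectation given $\calF_n$ pulls these factors out, leaving
\[
\E[M_{n+1}^\theta \mid \calF_n] = \frac{M_n^\theta}{v_\theta(X_n)}\, e^{-\Lambda(\theta)} \, \E\!\left[v_\theta(X_{n+1})\, e^{\theta f(X_{n+1})} \;\middle|\; \calF_n\right].
\]
By the Markov property the remaining conditional expectation depends only on $X_n$, and on the event $\{X_n = x\}$ it equals $\sum_{y \in S} P(x,y)\, v_\theta(y)\, e^{\theta f(y)}$.

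The crux is then to recognize this sum through the Perron--Frobenius eigen-relation. By definition $\Tilde{P}_\theta(x,y) = P(x,y)\, e^{\theta f(y)}$, and $v_\theta$ is its right Perron--Frobenius eigenvector with eigenvalue $\rho(\theta) = e^{\Lambda(\theta)}$, so that $\sum_{y \in S} \Tilde{P}_\theta(x,y)\, v_\theta(y) = \rho(\theta)\, v_\theta(x)$. Substituting this back yields
\[
\E[M_{n+1}^\theta \mid \calF_n] = \frac{M_n^\theta}{v_\theta(X_n)}\, e^{-\Lambda(\theta)}\, \rho(\theta)\, v_\theta(X_n) = M_n^\theta,
\]
since $e^{-\Lambda(\theta)} \rho(\theta) = 1$, which completes the argument. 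I do not anticipate a genuine obstacle; the only point demanding care is applying the eigen-equation in the correct direction --- it is the \emph{right} eigenvector acting on the column indexed by the future state $X_{n+1}$, and the identification $\rho(\theta) = e^{\Lambda(\theta)}$ is exactly what cancels the normalizing factor $e^{-\Lambda(\theta)}$.
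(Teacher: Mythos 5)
Your proof is correct and follows essentially the same route as the paper: factor out the $\calF_n$-measurable part of the one-step ratio, apply the Markov property, and invoke the right Perron--Frobenius eigen-relation $\sum_{y} \Tilde{P}_\theta(x,y) v_\theta(y) = \rho(\theta) v_\theta(x)$ with $\rho(\theta) = e^{\Lambda(\theta)}$ to cancel the normalization. The only addition is your explicit check of adaptedness and integrability, which the paper leaves implicit.
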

The following definition is the technical condition that we will require for our maximal inequality.

\begin{definition}[Doeblin's type of condition]\label{def:Doeblin}
Let $P$ be a transition probability matrix on the finite state space $S$.
For a nonempty set of states $A \subset S$, we say that $P$ is $A$-Doeblin
if, the submatrix of $P$ with rows and columns in $A$ is irreducible,
and for every $x \in S - A$ there exists $y \in A$ such that $P (x, y) > 0$.
\end{definition}

\addtocounter{example}{-1}
\begin{example}[continued]
For this example $P$ being $\{0\}$-Doeblin means that 
$p, q \in [0, 1)$, but already irreducibility imposed the constraints $p \in (0, 1]$ and $q \in [0, 1)$, hence the only additional constraint is $p \neq 1$.
\end{example}

\begin{remark}
Our~\autoref{def:Doeblin} is inspired by the classic Doeblin's Theorem,
see Theorem 2.2.1 in~\cite{Stroock14}.
Doeblin's Theorem states that, if the transition probability matrix $P$ 
satisfies Doeblin's condition (namely there exists $\epsilon > 0$,
and a state $y \in S$ such that for all $x \in S$ we have $P (x, y) \ge \epsilon$),
then $P$ has a unique stationary distribution $\pi$, and for all initial distributions $q$ we have geometric convergence to stationarity,
i.e. $\|q P^n - \pi\|_1 \le 2 (1-\epsilon)^n$.
Doeblin's condition, according to our~\autoref{def:Doeblin}, corresponds to $P$ being $\{y\}$-Doeblin for some $y \in S$.
\end{remark}

\begin{lemma}[Maximal inequality for irreducible Markov chains satisfying Doeblin's condition]\label{lem:maximal}
Let $\{X_n\}_{n \in \Int{}_{\ge 0}}$ be an irreducible Markov chain
over the finite state space $S$
with transition matrix $P$, initial distribution $q$,
and stationary distribution $\pi$.
Let $f : S \to \Real{}$ be a non-constant function on the state space.
Denote by $\mu (0) = \sum_{x \in S} f (x) \pi (x)$ the stationary mean when $f$ is applied, and by
$\bar{Y}_n = \frac{1}{n} \sum_{k=1}^n Y_k$ the empirical mean,
where $Y_k = f (X_k)$.
Assume that $P$ is $\left(\argmin_{x \in S} f (x)\right)$-Doeblin.
Then for all $\epsilon > 1$ we have
\[
\Pr \left(
\bigcup_{k=1}^n \left\{
\mu (0) \ge \bar{Y}_k
~\text{and}~
k \KLr{\bar{Y}_k}{\mu (0)} \ge \epsilon
\right\}
\right)
\le
C_- e \lceil \epsilon \log n \rceil e^{- \epsilon},
\]
where $C_- = C_- (P, f)$ is a positive constant depending only on the transition probability matrix $P$ and the function $f$.
\end{lemma}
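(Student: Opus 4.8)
The plan is to combine a peeling (slicing) argument over the time index $k$ with Doob's maximal inequality applied to the exponential martingale of \autoref{lem:martingale}. Write $\mu = \mu(0)$, $S_k = \sum_{j=1}^k Y_j$, and $d(x) = \KLr{x}{\mu}$. First I would record that $\E[M_k^\theta]=1$ for every $k$: conditioning on $X_0$ and using that $v_\theta$ is the right Perron--Frobenius eigenvector of $\Tilde P_\theta$ with eigenvalue $\rho(\theta)=e^{\Lambda(\theta)}$, together with $\Lambda(0)=0$ and $\dot\Lambda(0)=\mu$. Set $\alpha = e^{1/\epsilon}$ and $D = \lceil \epsilon \log n\rceil$, and cover $\{1,\dots,n\}$ by the slices $I_i = [\alpha^{i-1},\alpha^i)$, $i=1,\dots,D$ (possible since $\alpha^D \ge n$). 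For $I_i$ let $k_i^+ = \lfloor \alpha^i\rfloor$ and let $\gamma_i < \mu$ solve $d(\gamma_i) = \epsilon/k_i^+$ (if $\epsilon/k_i^+$ exceeds $\sup_{x<\mu} d(x)$ the slice is vacuous). Since $x \mapsto \KLr{x}{\mu}$ is decreasing on $(-\infty,\mu]$, for $k \in I_i$ the event $\{\mu \ge \bar Y_k,\ k\,d(\bar Y_k)\ge\epsilon\}$ forces $d(\bar Y_k)\ge \epsilon/k \ge \epsilon/k_i^+ = d(\gamma_i)$ with $\bar Y_k < \mu$, hence $\bar Y_k \le \gamma_i$, i.e. $S_k \le k\gamma_i$. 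Thus on each slice the varying threshold is replaced by a single value $\gamma_i$.

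Next, on $I_i$ I would tilt by $\theta_i = \dot\Lambda^{-1}(\gamma_i) < 0$ and lower bound the martingale. On $\{S_k \le k\gamma_i\}$, since $\theta_i<0$ we have $\theta_i S_k \ge \theta_i k\gamma_i$, so
\[
M_k^{\theta_i} = \frac{v_{\theta_i}(X_k)}{v_{\theta_i}(X_0)}\,e^{\theta_i S_k - k\Lambda(\theta_i)} \ge c_{\theta_i}\, e^{k(\theta_i\gamma_i - \Lambda(\theta_i))} = c_{\theta_i}\,e^{k\,d(\gamma_i)} = c_{\theta_i}\,e^{k\epsilon/k_i^+},
\]
where $c_\theta = \min_{y} v_\theta(y)/\max_{x} v_\theta(x) > 0$, and the middle equality uses that $\theta_i\gamma_i - \Lambda(\theta_i)$ is the Legendre transform $\KLr{\gamma_i}{\mu}=d(\gamma_i)$ evaluated at its optimizer $\theta_i$. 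For $k\in I_i$ we have $k/k_i^+ \ge \alpha^{i-1}/\alpha^i = e^{-1/\epsilon}$, so $M_k^{\theta_i}\ge c_{\theta_i} e^{\epsilon e^{-1/\epsilon}}$ on that event. Applying Doob's maximal inequality to the nonnegative mean-one martingale $\{M_k^{\theta_i}\}_{1\le k\le k_i^+}$ gives
\[
\Pr\!\left(\exists k\in I_i:\ S_k\le k\gamma_i\right)\le \Pr\!\left(\max_{1\le k\le k_i^+} M_k^{\theta_i}\ge c_{\theta_i}e^{\epsilon e^{-1/\epsilon}}\right)\le \frac{1}{c_{\theta_i}}\,e^{-\epsilon e^{-1/\epsilon}}.
\]
Using $e^{-x}\ge 1-x$ with $x=1/\epsilon$ yields $\epsilon e^{-1/\epsilon}\ge \epsilon-1$, hence $e^{-\epsilon e^{-1/\epsilon}}\le e\,e^{-\epsilon}$. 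Summing over the $D=\lceil\epsilon\log n\rceil$ slices produces the factor $e\lceil\epsilon\log n\rceil e^{-\epsilon}$, with the constant $\sup_i 1/c_{\theta_i}$ in front.

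The main obstacle is to show this constant depends only on $P$ and $f$: I must bound $C_- := \sup_{\theta<0} 1/c_\theta = \sup_{\theta<0}\big(\max_x v_\theta(x)/\min_y v_\theta(y)\big)$ and prove it finite. On any compact $\theta$-interval this is immediate from analyticity and positivity of the Perron eigenvector, so the only issue is the regime $\theta\to-\infty$, which is precisely where the hypothesis that $P$ is $(\argmin_x f(x))$-Doeblin (\autoref{def:Doeblin}) is used. Normalizing $\Tilde P_\theta$ by $e^{-\theta \min f}$ and letting $\theta\to-\infty$, the tilted matrix converges to $[P(x,y)\,\one\{y\in A\}]$ with $A = \argmin_x f(x)$; the Doeblin condition guarantees that the $A$-block is irreducible and that every state has a positive transition into $A$, so the limiting matrix still possesses a simple dominant eigenvalue with a strictly positive right eigenvector. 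Hence $v_\theta$ converges up to scale to a positive vector and the ratio $\max_x v_\theta(x)/\min_y v_\theta(y)$ stays bounded, giving $C_-<\infty$. Combining the three steps yields $\Pr(\calE)\le C_- e\lceil\epsilon\log n\rceil e^{-\epsilon}$. I expect the delicate part to be this uniform, $\theta\to-\infty$ control of the eigenvector ratio; the slicing and the Doob estimate are routine once the threshold has been frozen per slice.
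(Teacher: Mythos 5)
Your proof is correct and takes essentially the same route as the paper's: a geometric peeling of the time range into $O(\epsilon\log n)$ blocks, freezing the Kullback--Leibler threshold on each block, applying Ville's/Doob's maximal inequality to the exponential martingale of \autoref{lem:martingale}, and using the $\left(\argmin_{x\in S} f(x)\right)$-Doeblin hypothesis to bound the Perron--Frobenius eigenvector ratio uniformly over $\theta \le 0$. The only cosmetic difference is that you sketch the $\theta\to-\infty$ eigenvector-ratio argument directly, whereas the paper cites Proposition 1 of Moulos and Anantharam (2019) for that uniform bound.
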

\begin{remark}
If we only consider values of $\epsilon$ from a bounded subset of $(1, \infty)$, then we don't need to assume that $P$ is
$\left(\argmin_{x \in S} f (x)\right)$-Doeblin, and the constant $C_-$ will further depend on this bounded subset.
But in the analysis of the KL-UCB adaptive allocation rule we will need to consider values of $\epsilon$ that increase with the time horizon $T$,
therefore we have to impose the assumption that $P$ is $\left(\argmin_{x \in S} f (x)\right)$-Doeblin, so that $C_-$ has no dependencies on $\epsilon$.
\end{remark}
i.i.d. versions of this maximal inequality have found applicability not only in multi-armed bandit problems, but also in the case of context tree estimation,~\cite{Garivier-Leonardi-11}, indicating that our~\autoref{lem:maximal} may be of interest for other applications as well.

\section{The Round-Robin KL-UCB Adaptive Allocation Rule for Multiple Plays and Markovian Rewards}\label{sec:analysis}

For each arm $a \in [K]$ we define the empirical mean at the global time $t$ as,
\begin{equation}\label{eqn:mean-def}
\Bar{Y}_a (t) = (Y_1^a + \ldots + Y_{N_a (t)}^a)/N_a (t),
\end{equation}
and its local time counterpart as,
\[
\Bar{Y}_n^a = (Y_1^a + \ldots + Y_n^a)/n,
\]
with their link being $\Bar{Y}_n^a = \Bar{Y}_a (\tau_n^a)$,
where $\tau_n^a = \inf \{t \ge 1 : N_a (t) = n\}$.
At each round $t$ we calculate a single upper confidence bound index,
\begin{equation}\label{eqn:UCB-def}
U_a (t) =
\sup \left\{\mu \in \calM : \KL{\Bar{Y}_a (t)}{\mu} \le \frac{g (t)}{N_a (t)}\right\},
\end{equation}
where $g (t)$ is an increasing function,
and we denote its local time version by,
\[
U_n^a (t) = \sup\left\{\mu \in \calM : \KL{\Bar{Y}_n^a}{\mu} \le \frac{g (t)}{n}\right\}.
\]
Note that $U_a (t)$ is efficiently computable via a bisection method due to the monotonicity of $\KL{\Bar{Y}_a (t)}{\cdot}$.
It is straightforward to check, using the definition of $U_n^a (t)$,
the following two relations,
\begin{align}
& \Bar{Y}_n^a \le U_n^a (t) ~ \text{for all}~ n \le t \label{eqn:UCB}, \\
& U_n^a (t) ~ \text{is increasing in}~ t \ge n ~\text{for fixed}~ n. \label{eqn:incr}
\end{align}
Furthermore, in~\autoref{a:UCBs} we study the concentration properties of those upper confidence indices and of the sample means, using the concentration results for Markov chains from~\autoref{sec:concentration}.
The idea of calculating indices in a round robin way, dates back to the seminal work of~\cite{Lai-Robbins-85}.
Here we exploit this idea, which seems to have been forgotten over time in favor of algorithms that calculate indices for all the arms at each round,
and we augment it with the usage of the upper confidence bounds in~\eqref{eqn:UCB-def}, which are efficiently computable, see~\autoref{sec:simulations} for simulation results, as opposed to the statistics in Theorem 4.1 from~\cite{Ananth-Varaiya-Walrand-II-87}.
Moreover, this combination of a round-robin scheme and the indices in~\eqref{eqn:UCB-def} is amenable to a finite-time analysis, see~\autoref{a:analysis}.

\begin{algorithm}[H]
\SetAlgoLined
\caption{The round-robin KL-UCB adaptive allocation rule.}\label{alg:KL-UCB}
\kwParams{
number of arms $K \ge 2$,
time horizon $T \ge K$,
number of plays $1 \le M \le K$, \\
KL divergence rate function $\KLr{\cdot}{\cdot} : \bar{\calM} \times \bar{\calM} \to \Real{}_{\ge 0}$,
increasing function $g : \Intpp{} \to \Real{}$,
parameter $\delta \in (0, 1/K)$
}\;
\kwInit{
In the first $K$ rounds pull each arm $M$ times and set
$\Bar{Y}_a (K) = (Y_1^a + \ldots + Y_M^a)/M$, for $a = 1, \ldots, K$
}\;

\For{$t = K, \ldots, T-1$}{
Let $W_t = \{a \in [K] : N_a (t) \ge \lceil \delta t \rceil\}$\;
Pick any subset of arms $L_t \subseteq W_t$ such that:
\begin{itemize}
    \item $|L_t| = M$\;
    \item and $\dss\min_{a \in L_t} \Bar{Y}_a (t) \ge
    \sup_{b \in W_t-L_t} \Bar{Y}_b (t)$\;
\end{itemize}
Let $b \equiv t + 1 \pmod{K}$, with $b \in [K]$\;
Let
$\dss
U_b (t) =
\sup \left\{\mu \in \calM : \KLr{\Bar{Y}_b (t)}{\mu} \le
\frac{g (t)}{N_b (t)}\right\}
$\;
\eIf{$b \in L_t$ \textbf{or} $\dss\min_{a \in L_t} \Bar{Y}_a (t) \ge U_b (t)$}
{Pull the $M$ arms in $\phi_{t+1} = L_t$\;}
{Pick any $\dss a \in \argmin_{a \in L_t} \Bar{Y}_a (t)$\;
Pull the $M$ arms in $\phi_{t+1} = (L_t \setminus \{a\}) \cup \{b\}$\;
}
}
\end{algorithm}

\begin{proposition}\label{prop:well-defined}
For each $t \ge K$ we have that $|W_t| \ge M$,
and so~\autoref{alg:KL-UCB} is well defined.
\end{proposition}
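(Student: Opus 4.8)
The plan is to prove this by an elementary counting (pigeonhole) argument that exploits the constraint $\delta \in (0, 1/K)$. The starting point is a book-keeping identity: since exactly $M$ arms are pulled at every round, the total number of pulls up to round $t$ is pinned down. Summing $N_a(t) = \sum_{s=1}^t I\{a \in \phi_s\}$ over $a \in [K]$ gives $\sum_{a \in [K]} N_a(t) = \sum_{s=1}^t |\phi_s| = Mt$. I would then combine this exact total with two trivial per-arm bounds: each arm can be pulled at most once per round, so $N_a(t) \le t$ for every $a$; and, by the very definition of $W_t$, every arm outside $W_t$ satisfies $N_a(t) \le \lceil \delta t\rceil - 1$, where one always has $\lceil \delta t\rceil - 1 < \delta t$.

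Next I would split the total $Mt$ according to membership in $W_t$. The arms outside $W_t$ contribute only $\sum_{a \notin W_t} N_a(t) \le (K - |W_t|)(\lceil \delta t\rceil - 1) < K \delta t < t$, where the final strict inequality is exactly where the hypothesis $\delta < 1/K$ is used, while the arms inside $W_t$ contribute at most $|W_t| \cdot t$. Adding the two pieces yields $Mt < (|W_t| + 1)\, t$, hence $M < |W_t| + 1$; since $|W_t|$ and $M$ are integers this forces $|W_t| \ge M$, which is what we want.

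I do not expect a deep obstacle here: the only care needed is in handling the ceiling correctly (using $\lceil x\rceil - 1 < x$ for every real $x$, together with $\lceil \delta t\rceil \ge 1$ so that the bound $\lceil \delta t\rceil - 1 \ge 0$ is a legitimate nonnegative upper bound) and in keeping track of which inequalities are strict so that the final integer comparison goes through. As a sanity check one can examine the degenerate regime: for $t \ge K$ every arm has already been pulled at least $M \ge 1$ times during the initialization, so whenever $\lceil \delta t\rceil = 1$ one in fact has $W_t = [K]$ and $|W_t| = K \ge M$ directly, which is consistent with the general bound. Once $|W_t| \ge M$ is established, the selection step of \autoref{alg:KL-UCB} that picks a size-$M$ subset $L_t \subseteq W_t$ with the stated max-min property is feasible, and hence the allocation rule is well defined.
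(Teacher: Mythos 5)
Your proof is correct. Both your argument and the paper's start from the same book-keeping identity $\sum_{a\in[K]}N_a(t)=Mt$ and both ultimately lean on $\delta<1/K$, but the decompositions differ. The paper runs a greedy, iterative pigeonhole: it extracts an arm $a_1$ with $N_{a_1}(t)\ge tM/K$, removes it, finds $a_2$ with $N_{a_2}(t)\ge t(M-1)/(K-1)$, and so on, producing $M$ distinct arms each with at least $t/K>\delta t$ pulls, hence each in $W_t$. You instead partition the total $Mt$ according to membership in $W_t$, bound the outside contribution by $(K-|W_t|)(\lceil\delta t\rceil-1)<K\delta t<t$ and the inside contribution by $|W_t|\,t$, and conclude $M<|W_t|+1$ by an integrality argument. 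Your route is slightly more direct and avoids the induction; the paper's route yields the marginally stronger byproduct that there are $M$ specific arms each pulled at least $t/K$ times, which is not needed for the proposition. Your care with the ceiling (using $\lceil x\rceil-1<x$ together with $\lceil\delta t\rceil\ge 1$ so the per-arm bound is a legitimate nonnegative quantity before multiplying by $K-|W_t|$) is exactly the right place to be careful, and the strictness bookkeeping needed for the final integer comparison checks out.
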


\begin{theorem}[Markovian rewards and multiple plays: finite-time guarantees]\label{thm:main}
Let $P$ be an irreducible transition probability matrix on the finite state space $S$,
and $f : S \to \Real{}$ be a real-valued reward function, such that
$P$ is $\left(\argmin_{x \in S} f (x)\right)$-Doeblin.
Assume that the $K$ arms correspond to the parameter configuration $\btheta \in \Real{K}$ of the exponential family of Markov chains,
as described in~\autoref{eqn:exp-fam-MC}.
Without loss of generality assume that the $K$ arms are ordered so that,
\[
\mu (\theta_1) \ge \ldots \ge \mu (\theta_N) > 
\mu (\theta_{N+1}) \ldots = \mu (\theta_M) = \ldots = \mu (\theta_L) >
\mu (\theta_{L+1}) \ge \ldots \ge \mu (\theta_K).
\]
Fix $\epsilon \in \left(0, \min (\mu (\theta_N) - \mu (\theta_M), \mu (\theta_M) - \mu (\theta_{L+1}))\right)$.
The KL-UCB adaptive allocation rule for Markovian rewards and multiple plays,~\autoref{alg:KL-UCB}, 
with the choice $g (t) = \log t + 3 \log \log t$, enjoys the following finite-time upper bound on the regret,
\[
R_\btheta^\bphi (T) \le
\sum_{b=L+1}^K
\frac{\mu (\theta_M) -\mu (\theta_b)}
{\KL{\mu (\theta_b)}{\mu (\theta_M) - \epsilon}} \log T 
+ c_1 \sqrt{\log T} 
+ c_2 \log \log T + c_3 \sqrt{\log \log T} + c_4,
\]
where $c_1, c_2, c_3, c_4$ are constants with respect to $T$,
which are given more explicitly in the analysis.
\end{theorem}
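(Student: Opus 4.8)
The plan is to reduce the regret to the expected number of plays of the strictly suboptimal arms and then bound these counts by a case analysis of \autoref{alg:KL-UCB} together with the concentration results of \autoref{sec:concentration}. \emph{Step 1 (regret decomposition).} First I would rewrite the regret in terms of the play counts $N_b(T)$ of the strictly suboptimal arms $b \in \{L+1, \ldots, K\}$. Using $\sum_{a=1}^K N_a(T) = MT$, the ordering of the stationary means, and the fact that for an irreducible finite-state chain the finite-horizon reward $\Etheta[\sum_{n=1}^m Y_n^a]$ differs from $m \mu(\theta_a)$ by a quantity bounded uniformly in $m$ (a standard consequence of the Poisson equation for ergodic chains), I would establish
\[
R_\btheta^\bphi(T) \le \sum_{b=L+1}^K \bigl(\mu(\theta_M) - \mu(\theta_b)\bigr)\, \Etheta[N_b(T)] + c_4',
\]
where $c_4'$ absorbs the transient corrections into the eventual $c_4$. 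The structural point is that a play of a strictly suboptimal arm displaces a play worth $\mu(\theta_M)$, whereas interchanges among the equal-mean arms $\{N+1, \ldots, L\}$ cost nothing, so only the arms $b > L$ contribute to leading order.

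\emph{Step 2 (counting plays of a suboptimal arm).} Fix $b > L$. In \autoref{alg:KL-UCB} arm $b$ enters $\phi_{t+1}$ in only two ways: either $b$ is a leader, $b \in L_t$, or $b$ is the round-robin challenger (so $b \equiv t+1 \pmod{K}$) that is swapped in because $U_b(t) > \min_{a \in L_t} \Bar{Y}_a(t)$. On a good event $\calG_t$ where the sample means of the genuinely optimal arms stay above $\mu(\theta_M) - \epsilon$, one has $\min_{a \in L_t}\Bar{Y}_a(t) \ge \mu(\theta_M) - \epsilon$, so a swap requires $U_b(t) > \mu(\theta_M) - \epsilon$. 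Since $b \ge L+1$ gives $\mu(\theta_b) \le \mu(\theta_{L+1}) < \mu(\theta_M) - \epsilon$, the definition of the KL-UCB index and the monotonicity of $\KLr{\Bar{Y}_b(t)}{\cdot}$ in its second argument force $N_b(t)\,\KLr{\Bar{Y}_b(t)}{\mu(\theta_M) - \epsilon} \le g(t)$; restricting further to the event that $\Bar{Y}_b(t)$ is close to $\mu(\theta_b)$ gives $N_b(t) \lesssim g(T)/\KLr{\mu(\theta_b)}{\mu(\theta_M) - \epsilon}$. With $g(T) = \log T + 3\log\log T$, this produces the leading term $\frac{\log T}{\KLr{\mu(\theta_b)}{\mu(\theta_M) - \epsilon}}$ together with the $c_2 \log\log T$ correction.

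\emph{Step 3 (failure probabilities and error terms).} The remaining contributions come from the complements of the good events, and this is where the maximal inequality for Markov chains, \autoref{lem:maximal}, is essential. The index $U_b(t)$ is read off at the random, data-dependent sample size $N_b(t)$, so establishing that it is a valid upper confidence bound uniformly in $N_b(t)$ cannot be done with a fixed-$n$ Chernoff bound; instead a peeling argument over the possible values of $N_b(t)$ combined with \autoref{lem:maximal} keeps the error at the $\sqrt{\log T}$ scale. I would likewise use \autoref{lem:maximal} to bound the probability that an optimal arm's sample mean dips below $\mu(\theta_M) - \epsilon$, and the probability that a suboptimal arm's sample mean is atypically high, so that $b \in L_t$ or $U_b(t)$ stays large despite sufficient sampling. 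The forcing set $W_t = \{a : N_a(t) \ge \lceil \delta t\rceil\}$, well defined by \autoref{prop:well-defined}, guarantees every mature arm has $\Omega(t)$ samples, making these probabilities summable; converting the confidence level $g(t) = \Theta(\log t)$ into a sample-mean deviation of order $\sqrt{g(t)/N}$ is what yields the $c_1\sqrt{\log T}$ and $c_3\sqrt{\log\log T}$ terms, while the residual constant folds into $c_4$. Substituting the bound on $\Etheta[N_b(T)]$ into Step 1 gives the claim.

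\emph{Main obstacle.} The delicate core is the time-uniform control of the Markovian self-normalized deviations in Step 3: because the index is evaluated at the stopping-time-like count $N_b(t)$, one genuinely needs a maximal inequality rather than a pointwise bound, and the Markovian dependence forces this through the Doeblin condition and the exponential martingale of \autoref{lem:martingale}. The challenge is to arrange the peeling and the choice of the slack $\epsilon$ so that these deviation corrections stay at the $\sqrt{\log T}$ scale and do not degrade the optimal leading constant $\frac{1}{\KLr{\mu(\theta_b)}{\mu(\theta_M)}}$ obtained in the limit $\epsilon \to 0$.
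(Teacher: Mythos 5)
Your Steps 2 and 3 follow essentially the paper's route: the case split on whether arm $b$ enters $\phi_{t+1}$ as a leader or as the round-robin challenger, the use of the good event on the leaders' sample means to force $U_b(t) > \mu(\theta_M)-\epsilon$ and hence $N_b(t)\,\KLr{\Bar{Y}_b(t)}{\mu(\theta_M)-\epsilon}\le g(t)$, and the role of the maximal inequality (via the exponential martingale and the Doeblin condition) in controlling the index at the data-dependent count $N_b(t)$, with the Chernoff bound producing the $\sqrt{\log T}$ correction. That part of the sketch is sound.

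The genuine gap is in Step 1. Your decomposition
\[
R_\btheta^\bphi(T) \le \sum_{b=L+1}^K \bigl(\mu(\theta_M) - \mu(\theta_b)\bigr)\, \Etheta[N_b(T)] + c_4'
\]
is not correct for multiple plays, and the justification ``interchanges among the equal-mean arms $\{N+1,\ldots,L\}$ cost nothing'' is where it fails. The correct proxy (Equation~\eqref{eqn:regret-proxy} in the paper) has a second nonnegative term, $\sum_{a=1}^N (\mu(\theta_a)-\mu(\theta_M))\,\Etheta[T-N_a(T)]$: whenever one of the strictly better arms $a\le N$ is \emph{not} played, its slot may be filled by an arm from $\{N+1,\dots,L\}$ with mean exactly $\mu(\theta_M)<\mu(\theta_a)$, which incurs regret $\mu(\theta_a)-\mu(\theta_M)>0$ while contributing nothing to $\sum_{b>L}(\mu(\theta_M)-\mu(\theta_b))N_b(T)$. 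So this term cannot be bounded by the plays of the arms $b>L$, and it cannot be absorbed into a constant $c_4'$ without a separate argument. The paper explicitly flags this as the delicate feature of the multiple-plays setting and proves (Lemma~\ref{lem:best-not-played}, via the events $A_r\cap B_r$ and Lemma 5.3 of Anantharam--Varaiya--Walrand) that $\Etheta[T-N_a(T)] = O(\log\log T)$ for each $a\le N$, which is one of the sources of the $c_2\log\log T$ term in the statement. A related, smaller omission: a suboptimal arm can be a leader not only because its own sample mean is atypically high, but also because a good arm has too few plays to belong to $W_t$; handling this requires the time-uniform control of the good arms' indices at small sample counts (the events $C_r$), which is again an $O(\log\log T)$ contribution rather than something summable to a constant.
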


\begin{corollary}[Asymptotic optimality]\label{cor:optimal}
In the context of~\autoref{thm:main} the KL-UCB adaptive allocation rule,~\autoref{alg:KL-UCB}, is asymptotically optimal, and,
\[
\lim_{T \to \infty} \frac{R_\btheta^\bphi (T)}{\log T} =
\sum_{b=L+1}^K
\frac{\mu (\theta_M) -\mu (\theta_b)}
{\KL{\mu (\theta_b)}{\mu (\theta_M)}}.
\]
\end{corollary}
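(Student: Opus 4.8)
The plan is to establish the limit by a squeeze argument, sandwiching $R_\btheta^\bphi(T)/\log T$ between two matching quantities, the upper one coming from \autoref{thm:main} and the lower one from the asymptotic lower bound \eqref{eqn:lower-bound}. Since the $\liminf$ is always at most the $\limsup$, it suffices to show that the $\limsup$ is bounded above, and the $\liminf$ bounded below, by the same number.

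For the upper direction I would start from the finite-time bound of \autoref{thm:main}, divide both sides by $\log T$, and take $\limsup_{T \to \infty}$. Each of the lower-order terms $c_1 \sqrt{\log T}$, $c_2 \log\log T$, $c_3 \sqrt{\log\log T}$, $c_4$ is $o(\log T)$, so it vanishes after the division, leaving
\[
\limsup_{T \to \infty} \frac{R_\btheta^\bphi(T)}{\log T} \le \sum_{b=L+1}^K \frac{\mu(\theta_M) - \mu(\theta_b)}{\KL{\mu(\theta_b)}{\mu(\theta_M) - \epsilon}}
\]
for every admissible $\epsilon \in \left(0, \min(\mu(\theta_N) - \mu(\theta_M), \mu(\theta_M) - \mu(\theta_{L+1}))\right)$. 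Because the left-hand side carries no dependence on $\epsilon$, I may now let $\epsilon \downarrow 0$. For each $b \ge L+1$ we have $\mu(\theta_b) < \mu(\theta_M)$, so $\mu(\theta_M) - \epsilon$ stays strictly above $\mu(\theta_b)$ for small $\epsilon$; by the continuity of $\nu \mapsto \KL{\mu(\theta_b)}{\nu}$ on $\bar{\calM}$ established in \autoref{sec:exp-fam}, together with its strict monotonicity for $\nu > \mu(\theta_b)$, each denominator converges to the positive limit $\KL{\mu(\theta_b)}{\mu(\theta_M)}$ while staying bounded away from zero. Hence the right-hand side converges to $\sum_{b=L+1}^K (\mu(\theta_M) - \mu(\theta_b))/\KL{\mu(\theta_b)}{\mu(\theta_M)}$, which is the desired bound on the $\limsup$.

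For the lower direction I would invoke \eqref{eqn:lower-bound}. To apply it legitimately I must first check that \autoref{alg:KL-UCB} is uniformly good across all parameter configurations; this is immediate from \autoref{thm:main}, whose bound is $O(\log T) = o(T^\alpha)$ for every $\alpha > 0$ and every configuration $\btheta$ (only the constants change). The lower bound then gives $\liminf_{T \to \infty} R_\btheta^\bphi(T)/\log T \ge \sum_{b=L+1}^K (\mu(\theta_M) - \mu(\theta_b))/\KLr{\theta_b}{\theta_M}$, and under the identification $\KLr{\theta_b}{\theta_M} = \KL{\mu(\theta_b)}{\mu(\theta_M)}$ (recall $\mu = \dot{\Lambda}$ and the convention $\KL{\mu}{\nu} = \KLr{\dot{\Lambda}^{-1}(\mu)}{\dot{\Lambda}^{-1}(\nu)}$ from \autoref{sec:exp-fam}) the right-hand side coincides exactly with the limit obtained above. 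Combining $\liminf \ge \text{RHS}$ with $\limsup \le \text{RHS}$ forces the limit to exist and equal the common value.

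Nearly all of the work is discharged by \autoref{thm:main} and \eqref{eqn:lower-bound}, so the corollary itself is a soft limiting argument. The only genuinely delicate point I anticipate is the ordering of the two limits in the upper direction: I take $T \to \infty$ first and $\epsilon \to 0$ second, which is sound precisely because the $\limsup$ over $T$ is already $\epsilon$-free, so no uniformity in $\epsilon$ is required; and I must confirm that sending $\epsilon \to 0$ never drives a denominator to zero, which is guaranteed by the strict inequality $\mu(\theta_b) < \mu(\theta_M)$ holding for every index $b \ge L+1$.
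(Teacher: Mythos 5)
Your proposal is correct and follows essentially the same route as the paper's proof: divide the finite-time bound of \autoref{thm:main} by $\log T$, take the $\limsup$, send $\epsilon \downarrow 0$ using continuity of the divergence rate, and match against the asymptotic lower bound \eqref{eqn:lower-bound}. Your explicit verification that \autoref{alg:KL-UCB} is uniformly good (so that the lower bound applies) and your care with the order of the limits in $T$ and $\epsilon$ are details the paper leaves implicit, but they do not change the argument.
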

\section{The Round-Robin KL-UCB Adaptive Allocation Rule for Multiple Plays and i.i.d. Rewards}

As a byproduct of our work in~\autoref{sec:analysis} we further obtain a finite-time regret bound, which is asymptotically optimal, for the case of
multiple plays and i.i.d. rewards, from an exponential family of probability densities.

We first review the notion of an exponential family of probability densities, for which the standard reference is~\cite{Brown86}.
Let $(X, \calX, \rho)$ be a probability space.
A one-parameter exponential family is a family of probability densities
$\{p_\theta : \theta \in \Theta\}$ with respect to the measure $\rho$ on $X$, of the form,
\begin{equation}\label{eqn:exp-fam}
p_\theta (x) = \exp\{\theta f (x) - \Lambda (\theta)\} h (x),
\end{equation}
where $f : X \to \Real{}$ is called the sufficient statistic, is $\calX$-measurable, and there is no $c \in \Real{}$ such that $f (x) \stackrel{\rho - a.s.}{=} c, ~ h : X \to \Realp{}$ is called the carrier density,
and is a density with respect to $\rho$, and $\Lambda$ is called the log-Moment-Generating-Function and is given by 
$\Lambda (\theta) = \log \int_X e^{\theta f (x)} h (x) \rho (dx)$, which is finite for $\theta$ in the natural parameter space $\Theta = \{\theta \in \Real{} : \int_X e^{\theta f (x)} h (x) \rho (dx) < \infty\}$.
The log-MGF, $\Lambda (\theta)$, is strictly convex and its derivative forms a bijection between the natural parameters, $\theta$,
and the mean parameters, $\mu (\theta) = \int_X f (x) p_\theta (x) \rho (d x)$. The Kullback-Leibler divergence between $p_\theta$ and $p_\lambda$, for $\theta, \lambda \in \Theta$, can be written as $\KL{\theta}{\lambda} = \Lambda (\lambda) - \Lambda (\theta) - \dot{\Lambda} (\theta) (\lambda - \theta)$.

For this section, each arm $a \in [K]$ with parameter $\theta_a$ corresponds to the i.i.d. process $\{X_n^a\}_{n \in \Intpp{}}$, where each $X_n^a$ has density $p_{\theta_a}$ with respect to $\rho$, which gives rise to the i.i.d. reward process $\{Y_n^a\}_{n \in \Intpp{}}$, with $Y_n^a = f (X_n^a)$.

\begin{remark}\label{rmk:IID-Markov}
When there is a finite set $S \in \calX$ such that $\rho (S) = 1$,
then the exponential family of probability densities in~\autoref{eqn:exp-fam}, is just a special case of the exponential family of Markov chains in~\autoref{eqn:exp-fam-MC}, as can be seen by setting $P (x, \cdot) = h (\cdot)$, for all $x \in S$. Then $v_\theta (x) = 1$ for all $x \in S$, the log-Perron-Frobenius eigenvalue coincides with the log-MGF, and $\Theta = \Real{}$.
Therefore,~\autoref{thm:main} already resolves the case of multiple plays and i.i.d. rewards from an exponential family of finitely supported densities.
\end{remark}

\begin{theorem}[i.i.d. rewards and multiple plays: finite-time guarantees]\label{thm:main-IID}
Let $(X, \calX, \rho)$ be a probability space, 
$f : X \to \Real{}$ a $\calX$-measurable function, and $h : X \to \Realp{}$
a density with respect to $\rho$.
Assume that the $K$ arms correspond to the parameter configuration $\btheta \in \Theta^K$ of the exponential family of probability densities, as described in~\autoref{eqn:exp-fam}.
Without loss of generality assume that the $K$ arms are ordered so that,
\[
\mu (\theta_1) \ge \ldots \ge \mu (\theta_N) > 
\mu (\theta_{N+1}) \ldots = \mu (\theta_M) = \ldots = \mu (\theta_L) >
\mu (\theta_{L+1}) \ge \ldots \ge \mu (\theta_K).
\]
Fix $\epsilon \in \left(0, \min (\mu (\theta_N) - \mu (\theta_M), \mu (\theta_M) - \mu (\theta_{L+1}))\right)$.
The KL-UCB adaptive allocation rule for i.i.d. rewards and multiple plays,~\autoref{alg:KL-UCB}, 
with the choice $g (t) = \log t + 3 \log \log t$, enjoys the following finite-time upper bound on the regret,
\[
R_\btheta^\bphi (T) \le
\sum_{b=L+1}^K
\frac{\mu (\theta_M) -\mu (\theta_b)}
{\KL{\mu (\theta_b)}{\mu (\theta_M) - \epsilon}} \log T 
+ c_1 \sqrt{\log T} 
+ c_2 \log \log T + c_3 \sqrt{\log \log T} + c_4,
\]
where $c_1, c_2, c_3, c_4$ are constants with respect to $T$.

Consequently, the KL-UCB adaptive allocation rule,~\autoref{alg:KL-UCB}, is asymptotically optimal, and,
\[
\lim_{T \to \infty} \frac{R_\btheta^\bphi (T)}{\log T} =
\sum_{b=L+1}^K
\frac{\mu (\theta_M) -\mu (\theta_b)}
{\KL{\mu (\theta_b)}{\mu (\theta_M)}}.
\]
\end{theorem}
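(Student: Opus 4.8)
The plan is to observe that \autoref{alg:KL-UCB} is literally the same algorithm in the i.i.d.\ and the Markovian settings, so the entire regret decomposition carried out in \autoref{a:analysis} for \autoref{thm:main} transfers with no change; the only ingredients that are genuinely Markov-specific are the concentration results of \autoref{sec:concentration}, and these must be replaced by their i.i.d.\ analogues. By \autoref{rmk:IID-Markov} the case of a finitely supported carrier density is already subsumed by \autoref{thm:main}, so the substantive content is to handle a general probability space $(X, \calX, \rho)$, where in particular the sufficient statistic $f$ may be unbounded.

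First I would specialize the three deviation lemmata of \autoref{sec:concentration} to the i.i.d.\ case. Because $v_\theta \equiv 1$ here, the exponential martingale of \autoref{lem:martingale} collapses to the classical Wald martingale $M_n^\theta = \exp\{\theta (f(X_1) + \cdots + f(X_n)) - n \Lambda(\theta)\}$, which is a nonnegative mean-one martingale directly from independence and the identity $\E[e^{\theta f(X_1)}] = e^{\Lambda(\theta)}$, valid for every $\theta$ in the natural parameter space $\Theta$. The maximal inequality of \autoref{lem:maximal} then follows from the same peeling-over-$k$ argument combined with Ville's maximal inequality applied to $M_n^\theta$; crucially, the boundary ratio $v_\theta(X_n)/v_\theta(X_0)$ that forced the Doeblin assumption in the Markov proof is now identically one, so the corresponding constant becomes an absolute constant that is uniform in $\epsilon$, and no Doeblin-type hypothesis is needed. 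In the same way the Chernoff bound \autoref{lem:Chernoff} reduces to the classical Cram\'er large-deviation estimate for the exponential family \eqref{eqn:exp-fam}, using the convex-duality expression $\KL{\theta}{\lambda} = \Lambda(\lambda) - \Lambda(\theta) - \dot\Lambda(\theta)(\lambda - \theta)$ together with the strict convexity of $\Lambda$.

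With these i.i.d.\ concentration statements in hand, I would then rerun the argument of \autoref{a:analysis} unchanged. Concretely, since arms $N+1, \ldots, L$ share the optimal mean $\mu(\theta_M)$ and contribute no regret, it suffices to bound $\E_\btheta^\bphi[N_b(T)]$ for each truly suboptimal arm $b > L$. Splitting the event that $b$ is pulled at round $t+1$ into a \emph{typical} regime, where $b$ has already been sampled often enough that $\bar Y_b(t)$ and the index $U_b(t)$ concentrate around $\mu(\theta_b) < \mu(\theta_M) - \epsilon$ (which yields the leading term $\log T / \KL{\mu(\theta_b)}{\mu(\theta_M) - \epsilon}$ through the definition \eqref{eqn:UCB-def} of the index and the choice $g(t) = \log t + 3\log\log t$), and an \emph{atypical} regime of concentration failures (controlled by the i.i.d.\ maximal inequality and Chernoff bound, contributing only the $\sqrt{\log T}$, $\log\log T$, $\sqrt{\log\log T}$ and constant terms), and summing over $b$, gives the stated finite-time bound. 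The asymptotic optimality then follows by dividing by $\log T$, letting $T \to \infty$, then letting $\epsilon \downarrow 0$ and invoking the continuity of $\KL{\cdot}{\cdot}$.

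The main obstacle I anticipate is precisely the passage to unbounded $f$. In the finite-state Markov analysis the reward function is automatically bounded, whereas for a general exponential family of densities the tails of $f$ are controlled only through the finiteness of $\Lambda$ on $\Theta$ and the steepness of $\dot\Lambda$. I would therefore need to check that the peeling grid in the maximal inequality and the inversion of the rate function in the Chernoff bound remain valid, with constants that do not degrade, when $f$ ranges over an unbounded set; this amounts to verifying that all the monotonicity and bijectivity properties of $\dot\Lambda$ and of $\KL{\cdot}{\cdot}$ recorded for the Markov family continue to hold for the density family, which they do by the standard exponential-family theory. Once this is secured, every remaining step is a mechanical transcription of the Markovian proof.
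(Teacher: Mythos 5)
Your proposal is correct and follows essentially the same route as the paper, whose entire proof of \autoref{thm:main-IID} is the observation that the argument for \autoref{thm:main} goes through verbatim once every occurrence of the right Perron--Frobenius eigenvector $v_\theta(x)$ is replaced by one (so the exponential martingale becomes the Wald martingale, the Doeblin constant $C_-$ and the Chernoff constant $C_+$ become $1$, and no Doeblin hypothesis is needed). Your additional care about unbounded $f$ is a reasonable refinement of the same argument rather than a different approach; the paper does not address it explicitly.
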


\begin{remark}\label{rmk:single-plays}
For the special case of single plays, $M=1$,
such a finite-time regret bound is derived in~\cite{Cappe-Garivier-Maillard-Munos-Stoltz-13}, and here we generalize it for multiple plays, $1 \le M  \le K$. One striking difference is that we consider calculations of KL upper confidence bounds in a round-robin way, as opposed to calculating them for all the arms at each round. 
But computing KL-UCB indices adds an extra computational overhead, as it entails inverting an increasing function via the bisection method.
Thus, our approach has important practical implications as it leads to significantly more efficient algorithms.
We verify this via simulations in~\autoref{sec:simulations}.
\end{remark}
\section{Simulation Results}\label{sec:simulations}

In the context of~\autoref{e:example},
we set $p = 0.49, q = 0.45, K = 14$, and $T = 10^6$.
We generated the bandit instance $\theta_1, \ldots, \theta_K$ by drawing i.i.d. $N (0, 1/16)$ samples.
Four adaptive allocation rules were taken into consideration:
\begin{enumerate}
    \item \textbf{UCB}: at reach round calculate all UCB indices,
    \[
    U_a^{\mathrm{UCB}} (t) = \Bar{Y}_a (t) + \beta \sqrt{\frac{2 \log t}{N_a (t)}},
    ~\text{for}~ a = 1, \ldots, K.
    \]
    
    \item \textbf{Round-Robin UCB}: at reach round calculate a single UCB index,
    \[
    U_b^{\mathrm{UCB}} (t) = \Bar{Y}_b (t) + \beta \sqrt{\frac{2 \log t}{N_b (t)}},
    ~\text{only for}~ b \equiv t + 1 \pmod{K}.
    \]
    
    \item \textbf{KL-UCB}: at reach round calculate all KL-UCB indices,
    \[
    U_a^{\mathrm{KL-UCB}} (t) =
    \sup \left\{\mu \in \calM : \KL{\Bar{Y}_a (t)}{\mu} \le \frac{\log t + 3 \log \log t}{N_a (t)}\right\},
    ~\text{for}~ a = 1, \ldots, K.
    \]
    
    \item \textbf{Round-Robin KL-UCB}: at reach round calculate a single KL-UCB index,
    \[
    U_b^{\mathrm{KL-UCB}} (t) =
    \sup \left\{\mu \in \calM : \KL{\Bar{Y}_b (t)}{\mu} \le \frac{\log t + 3 \log \log t}{N_b (t)}\right\},
    ~\text{only for}~ b \equiv t + 1 \pmod{K}.
    \]
\end{enumerate}
For the UCB indices, after some tuning, we picked $\beta = 1$ which is significantly smaller than the theoretical values of $\beta$ from~\cite{Tekin-Liu-10,Tekin-Liu-12,Moulos-Hoeffding-20}.
For each of those adaptive allocation rules $10^4$ Monte Carlo iterations were performed in order to estimate
the expected regret, and the simulation results are presented in the following plots.

\begin{figure}[H]
  \centering
  \begin{minipage}[b]{0.45\textwidth}
    \includegraphics[width=\textwidth]{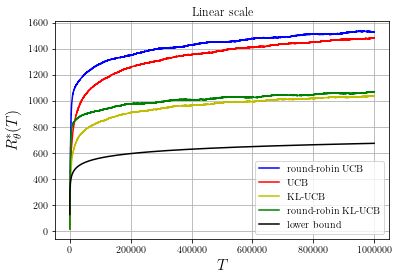}
    \caption{Regret of the various algorithms as a function of time in linear scale.}
  \end{minipage}
  \hfill
  \begin{minipage}[b]{0.45\textwidth}
    \includegraphics[width=\textwidth]{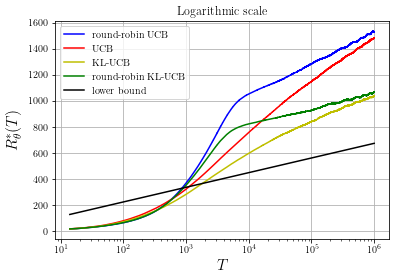}
    \caption{Regret of the various algorithms as a function of time in logarithmic scale.}
  \end{minipage}
\end{figure}

For our simulations we used the programming language C, to produce highly efficient code,
and a personal computer with a 2.6GHz processor and 16GB of memory.
We report that the simulation for the Round-Robin KL-UCB adaptive allocation rule was $14.48$ times faster than the
simulation for the KL-UCB adaptive allocation rule. 
This behavior is expected since each calculation of a KL-UCB index induces a significant computation cost as it involves finding the inverse of an increasing function using the bisection method.
Additionally, the simulation for the Round-Robin UCB adaptive allocation rule was $3.15$ times faster than the simulation for the KL-UCB adaptive allocation rule, and this is justified from the fact that calculating mathematical functions such as $\log(\cdot)$ and $\sqrt{\cdot}$, is more costly than calculating averages which only involve a division. Our simulation results yield that in practice round-robin schemes are significantly faster than schemes that calculate the indices of all the arms at each round, and the computational gap is increasing with the number of arms $K$, while the behavior of the expected regrets is very similar.

\section*{Acknowledgements} We would like to thank 
Venkat Anantharam, Jim Pitman and 
Satish Rao for many helpful discussions.
This research was supported in part by the NSF grant CCF-1816861.

\bibliographystyle{apalike}
\bibliography{references}

\appendix

\begin{appendices}
\section{Concentration Lemmata for Markov Chains}\label{a:concentration}

We first develop a Chernoff bound, which remarkably does not impose any conditions on the Markov chain other than irreducibility, which is though a mandatory requirement for the stationary mean to be well-defined.

\begin{lemma}[Chernoff bound for irreducible Markov chains]\label{lem:Chernoff}
Let $\{X_n\}_{n \in \Int{}_{\ge 0}}$ be an irreducible Markov chain
over the finite state space $S$
with transition probability matrix $P$, initial distribution $q$,
and stationary distribution $\pi$.
Let $f : S \to \Real{}$ be a nonconstant function on the state space.
Denote by $\mu (0) = \sum_{x \in S} f (x) \pi (x)$ the stationary mean when $f$ is applied, and by
$\bar{Y}_n = \frac{1}{n} \sum_{k=1}^n Y_k$ the empirical mean,
where $Y_k = f (X_k)$.
Let $F$ be a closed subset of $\calM \cap [\mu (0), \infty)$. Then,
\[
\Pr \left( \bar{Y}_n \ge \mu \right)
\le
C_+ e^{- n \KLr{\mu}{\mu (0)}},~\text{for}~ \mu \in F,
\]
where $\KLr{\cdot}{\cdot}$ stands for the Kullback-Leibler divergence rate
in the exponential family of stochastic matrices generated by $P$ and $f$,
and $C_+ = C_+ (P, f, F)$ is a positive constant depending only on the transition probability matrix $P$, the function $f$ and the closed set $F$.
\end{lemma}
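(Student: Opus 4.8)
The plan is to run a Chernoff-type argument, but instead of trying to bound the moment generating function of $\sum_{k=1}^n f(X_k)$ directly---which is awkward for Markov chains because successive rewards are dependent---I would exponentially tilt using the martingale $\{M_n^\theta\}$ supplied by~\autoref{lem:martingale}. The only two properties of this martingale that I need are that $M_n^\theta > 0$ (since $v_\theta$ is a strictly positive Perron right eigenvector on the finite state space), and that it is normalized: the martingale property gives $\E[M_n^\theta] = \E[M_1^\theta]$, and a one-line computation using the right-eigenvector identity $\sum_y P(x,y) e^{\theta f(y)} v_\theta(y) = \rho(\theta) v_\theta(x)$ shows $\E[M_1^\theta] = 1$, so $\E[M_n^\theta] = 1$ for every $n \ge 1$. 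These feed straight into Markov's inequality.

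Fix $\mu \in F$. Since $F \subseteq \calM \cap [\mu(0), \infty)$ and $\dot{\Lambda}$ is a strictly increasing bijection from $\Real{}$ onto $\calM$ with $\dot{\Lambda}(0) = \mu(0)$, there is a unique tilt $\theta = \dot{\Lambda}\inv(\mu) \ge 0$. On the event $\{\bar{Y}_n \ge \mu\} = \{\sum_{k=1}^n f(X_k) \ge n\mu\}$, nonnegativity of $\theta$ gives $\exp\{\theta \sum_{k=1}^n f(X_k)\} \ge e^{n\theta\mu}$, and bounding the nuisance ratio below by $c_\theta := \min_x v_\theta(x) / \max_x v_\theta(x) > 0$ yields, on that event, $M_n^\theta \ge c_\theta \exp\{n(\theta\mu - \Lambda(\theta))\}$. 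Markov's inequality together with $\E[M_n^\theta] = 1$ then delivers $\Pr(\bar{Y}_n \ge \mu) \le c_\theta\inv \exp\{-n(\theta\mu - \Lambda(\theta))\}$.

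It remains to identify the exponent with the divergence rate. Because the generator $P$ is stochastic its Perron eigenvalue is $1$, so $\Lambda(0) = \log \rho(0) = 0$; combined with $\mu(0) = \dot{\Lambda}(0)$, specializing $\KLr{\theta}{\lambda} = \Lambda(\lambda) - \Lambda(\theta) - \dot{\Lambda}(\theta)(\lambda - \theta)$ to $\lambda = 0$ gives precisely $\KLr{\mu}{\mu(0)} = \theta\mu - \Lambda(\theta)$ (using the abuse of notation $\KLr{\mu}{\mu(0)} = \KLr{\dot{\Lambda}\inv(\mu)}{0}$). This turns the previous display into $\Pr(\bar{Y}_n \ge \mu) \le c_\theta\inv e^{-n \KLr{\mu}{\mu(0)}}$.

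Finally I would upgrade $c_\theta\inv$ to a single constant valid for all $\mu \in F$. Since $\calM$ is a bounded open interval, a closed $F \subseteq \calM \cap [\mu(0), \infty)$ is in fact compact, so $\dot{\Lambda}\inv(F)$ is a compact subset of the natural parameter space. The Perron right eigenvector $\theta \mapsto v_\theta$ depends analytically (in particular continuously) on $\theta$ and stays strictly positive, hence $\theta \mapsto \max_x v_\theta(x)/\min_x v_\theta(x)$ is continuous and attains a finite maximum $C_+ = C_+(P, f, F)$ over $\dot{\Lambda}\inv(F)$, manifestly independent of $n$ and of the initial distribution $q$. I expect this uniformity step to be the main obstacle: it is what forces $F$ to stay away from the boundary of $\calM$, where $\theta \to \pm\infty$ and the eigenvector ratio can blow up. It is also exactly where fixing $n$ and controlling a single time---rather than a union over $k = 1, \ldots, n$ as in~\autoref{lem:maximal}---lets me get away with the crude eigenvector bound $c_\theta$ and avoid imposing any Doeblin-type condition.
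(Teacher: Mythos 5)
Your proof is correct and takes essentially the same route as the paper's: the same tilt $\theta_\mu = \dot{\Lambda}\inv (\mu) \ge 0$, the same eigenvector-ratio constant $\max_{x,y} v_{\theta_\mu}(y)/v_{\theta_\mu}(x)$, and the same compactness-of-$F$ plus continuity-of-$\theta \mapsto v_\theta$ argument for uniformity. The only cosmetic difference is that you reach the bound by applying Markov's inequality to the martingale $M_n^\theta$ of \autoref{lem:martingale} (using $\E[M_n^\theta]=1$), whereas the paper bounds $\E[e^{\theta(f(X_1)+\cdots+f(X_n))}]$ directly through the path-sum and the right-eigenvector identity --- two packagings of the identical computation.
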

\begin{proof}[Proof of~\autoref{lem:Chernoff}.] \hfill\break
Using the standard exponential transform followed by Markov's inequality we obtain that for any $\theta \ge 0$,
\[
\Pr (\Bar{Y}_n \ge \mu)
\le \Pr (e^{n \theta \Bar{Y}_n} \ge e^{n \theta \mu})
\le \exp\left\{-n \left(\theta \mu - 
\frac{1}{n} \log \E \left[e^{\theta (f (X_1) + \ldots + f (X_n))}\right]\right)\right\}.
\]
We can upper bound the expectation from above in the following way,
\begin{align*}
\E \left[e^{\theta (f (X_1) + \ldots + f (X_n))}\right] 
&=
\sum_{x_0, \ldots, x_n \in S} q (x_0) P (x_0, x_1) e^{\theta f (x_1)}
\ldots 
P (x_{n-1}, x_n) e^{\theta f (x_n)} \\
&= \sum_{x_0, x_n \in S} q (x_0) \Tilde{P}_\theta^n (x_0, x_n) \\
&\le \frac{1}{\min_{x \in S} v_\theta (x)} \sum_{x_0, x_n \in S} q (x_0) \Tilde{P}_\theta^n (x_0, x_n) v_\theta (x_n) \\
&= \frac{\rho (\theta)^n}{\min_{x \in S} v_\theta (x)} \sum_{x_0 \in S} q (x_0) v_\theta (x_0) \\
&\le \max_{x, y \in S} \frac{v_\theta (y)}{v_\theta (x)} \rho (\theta)^n,
\end{align*}
where in the last equality we used the fact that $v_\theta$ is a right Perron-Frobenius eigenvector of $\Tilde{P}_\theta$.

From those two we obtain,
\[
\Pr (\Bar{Y}_n \ge \mu) \le 
\max_{x, y \in S} \frac{v_\theta (y)}{v_\theta (x)} 
\exp\left\{-n (\theta \mu - \Lambda (\theta))\right\},
\]
and if we plug in $\theta_\mu = \Dot{\Lambda}\inv (\mu)$,
which is a nonnegative real number since $\mu \in F \subseteq \calM \cap [\mu (0), \infty)$, we obtain,
\[
\Pr (\Bar{Y}_n \ge \mu) \le 
\max_{x, y \in S} \frac{v_{\theta_\mu} (y)}{v_{\theta_\mu} (x)} 
\exp\left\{-n \KLr{\mu}{\mu (0)}\right\},
\]
We assumed that $F$ is closed, and moreover $F$ is bounded since it is a subset of the bounded open interval $\calM$.
Therefore, $F$ is compact, and so $\dot{\Lambda}\inv (F)$ is compact as well. 
Then due to the fact that $\theta \mapsto v_\theta (x)/v_\theta (y)$ is continuous, from Lemma 2 in~\cite{Moulos-Ananth-19},
we deduce that,
\[
\sup_{\theta \in \dot{\Lambda}\inv (F)} \max_{x, y \in S} \frac{v_\theta (y)}{v_\theta (x)} < \infty,
\]
which we define to be the finite constant $C_+$ of~\autoref{lem:Chernoff}, and which may only depend on $P, f$ and $F$.
\end{proof}
\begin{remark}\label{rmk:Chernoff-comparison}
This bound is a variant of Theorem 1 in~\cite{Moulos-Ananth-19}, where the authors 
derive a Chernoff bound under some structural assumptions on the transition probability matrix $P$ and the function $f$.
In our~\autoref{lem:Chernoff}, following their techniques, we derive a Chernoff bound
without any assumptions, relying though on the fact that $\mu$ lies in a closed subset of the mean parameter space.
\end{remark}

Next, we proceed with the proofs of the lemmata in~\autoref{sec:concentration}.

\begin{proof}[Proof of~\autoref{lem:martingale}.] \hfill\break
\begin{align*}
\E (M_{n+1}^\theta \mid \calF_n)
&=
M_n^\theta
\frac{e^{- \Lambda (\theta)}}{v_\theta (X_n)}
\E (v_\theta (X_{n+1}) e^{\theta f (X_{n+1})} \mid \calF_n) \\
&=
M_n^\theta
\frac{e^{- \Lambda (\theta)}}{v_\theta (X_n)}
\sum_{x \in S} v_\theta (x) e^{\theta f (x)} P (X_n, y) \\
&=
M_n^\theta
\frac{e^{- \Lambda (\theta)}}{v_\theta (X_n)}
\sum_{x \in S} \Tilde{P}_\theta (X_n, x) v_\theta (x) \\
&=
M_n^\theta,
\end{align*}
where in the last equality we used the fact that $v_\theta$ is a right Perron-Frobenius eigenvector of $\Tilde{P}_\theta$.
\end{proof}

\begin{proof}[Proof of~\autoref{lem:maximal}.] \hfill\break
Our proof extends the argument from Lemma 11 in
~\cite{Cappe-Garivier-Maillard-Munos-Stoltz-13}, which deals with IID random variables.
In order to handle the Markovian dependence we need to use the exponential martingale for Markov chains from~\autoref{lem:martingale}, as well as continuity results for the right Perron-Frobenius eigenvector.

Following the proof strategy used to establish the law of the iterated logarithm, we split the range of the union $[n]$ into chunks of exponentially increasing sizes.
Denote by $\alpha > 1$ the growth factor, to be specified later,
and let $n_m = \lfloor \alpha^m \rfloor$ be the end point of the $m$-th chunk, with $n_0 = 0$. An upper bound on the number of chunks is
$M = \lceil \log n / \log \alpha \rceil$,
and so we have that
\begin{align*}
\bigcup_{k=1}^n \left\{
\mu (0) \ge \bar{Y}_k, ~
k \KLr{\bar{Y}_k}{\mu (0)} \ge \epsilon
\right\}
& \subseteq
\bigcup_{m=1}^M
\bigcup_{k = n_{m-1}+1}^{n_m} \left\{
\mu (0) \ge \bar{Y}_k, 
k \KLr{\bar{Y}_k}{\mu (0)} \ge \epsilon
\right\} \\
& \subseteq 
\bigcup_{m=1}^M 
\bigcup_{k = n_{m-1}+1}^{n_m} \left\{
\mu (0) \ge \bar{Y}_k, 
\KLr{\bar{Y}_k}{\mu (0)} \ge \frac{\epsilon}{n_m}
\right\}.
\end{align*}
Let $\mu_m = \inf \{\mu < \mu (0) : \KL{\mu}{\mu (0)} \le \epsilon/n_m \}$, and $\theta_m = \dot{\Lambda}\inv (\mu_m) < \dot{\Lambda}\inv (\mu (0)) = 0$
so that $\theta_m \mu_m - \Lambda (\theta_m) = \KL{\mu_m}{\mu (0)}$.
Then,
\begin{align*}
\left\{
\mu (0) \ge \bar{Y}_k, ~
\KL{\bar{Y}_k}{\mu (0)} \ge \frac{\epsilon}{n_m}
\right\}
&\subseteq \left\{
\bar{Y}_k \le \mu_m
\right\} \\
&= 
\left\{
e^{\theta_m k \Bar{Y}_k - k \Lambda (\theta_m)} \ge 
e^{k (\theta_m \mu_m - \Lambda (\theta_m))}
\right\} \\
&= 
\left\{
M_k^{\theta_m} \ge 
\frac{v_{\theta_m} (X_k)}{v_{\theta_m} (X_0)}
e^{k \KL{\mu_m}{\mu (0)}}
\right\} \\
&\subseteq 
\left\{
M_k^{\theta_m} \ge 
\frac{v_{\theta_m} (X_k)}{v_{\theta_m} (X_0)}
e^{(n_{m-1}+1) \KL{\mu_m}{\mu (0)}}
\right\}.
\end{align*}
At this point we use the assumption that $P$ is $\left(\argmin_{x \in S} f (x) \right)$-Doeblin in order to invoke Proposition 1 from~\cite{Moulos-Ananth-19}, which in our setting states that there exists a constant
$C_- = C_- (P, f) \ge 1$ such that,
\[
\frac{1}{C_-} \le 
\inf_{\theta \in \Real{}_{\le 0}, x, y \in S}
\frac{v_\theta (y)}{v_\theta (x)}.
\]

This gives us the inclusion,
\[
\left\{
M_k^{\theta_m} \ge 
\frac{v_{\theta_m} (X_k)}{v_{\theta_m} (X_0)}
e^{(n_{m-1}+1) \KL{\mu_m}{\mu (0)}}
\right\}
\subseteq 
\left\{
M_k^{\theta_m} \ge 
\frac{e^{(n_{m-1}+1) \KL{\mu_m}{\mu (0)}}}{C_-}
\right\}.
\]
In~\autoref{lem:martingale} we have established that
$M_k^{\theta_m}$ is a positive martingale,
which combined with a maximal inequality for martingales due to~\cite{Ville39} (see Exercise 4.8.2 in~\cite{Durrett19} for a modern reference), yields that,
\begin{align*}
\Pr \left(
\bigcup_{k=n_{m-1}+1}^{n_m}\left\{
M_k^{\theta_m} \ge 
\frac{e^{(n_{m-1}+1) \KL{\mu_m}{\mu (0)}}}{C_-}
\right\}\right)
&\le
C_- e^{-(n_{m-1}+1) \KL{\mu_m}{\mu (0)}} \\
&\le 
C_- e^{-\epsilon\frac{n_{m-1}+1}{n_m}} \le 
C_- e^{-\frac{\epsilon}{\alpha}}.
\end{align*}
To conclude, we pick the growth factor $\alpha = \epsilon/(\epsilon-1)$,
and we upper bound the number of chunks by
$M \le \lceil \epsilon \log n \rceil$.
\end{proof}
\section{Concentration Properties of Upper Confidence Bounds and Sample Means}\label{a:UCBs}

\begin{lemma}
    For every arm $a = 1, \ldots, K$, and $t \ge 3$, we have that,
    \begin{equation}\label{eqn:maximal}
        \Pr_{\theta_a} \left(\min_{n=1,\ldots,t} U_n^a (t) \le \mu (\theta_a)\right) \le \frac{4 e C_-^a}{t \log t},
    \end{equation}
    where $C_-^a$ is the constant prescribed in~\autoref{lem:maximal},
    when the maximal inequality is applied to the Markov chain with parameter $\theta_a$.
\end{lemma}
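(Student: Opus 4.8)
The plan is to reduce the event $\{\min_n U_n^a(t) \le \mu(\theta_a)\}$ to the form handled by the maximal inequality~\autoref{lem:maximal}, applied to arm $a$'s chain, and then substitute the explicit choice $g(t) = \log t + 3\log\log t$ to extract the stated rate. The first step is a pointwise event inclusion, valid for each fixed $n \le t$:
\[
\left\{U_n^a(t) \le \mu(\theta_a)\right\}
\subseteq
\left\{\mu(\theta_a) \ge \bar{Y}_n^a ~\text{and}~ n \KLr{\bar{Y}_n^a}{\mu(\theta_a)} \ge g(t)\right\}.
\]
This rests on reading $U_n^a(t)$ as the right endpoint of the sublevel interval $\{\mu \in \calM : \KLr{\bar{Y}_n^a}{\mu} \le g(t)/n\}$, which is an interval since $\mu \mapsto \KLr{\bar{Y}_n^a}{\mu}$ is convex with minimizer $\bar{Y}_n^a$. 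On the event $U_n^a(t) \le \mu(\theta_a)$ one first notes $\bar{Y}_n^a \le \mu(\theta_a)$, for otherwise $\bar{Y}_n^a$ is feasible and forces $U_n^a(t) \ge \bar{Y}_n^a > \mu(\theta_a)$. Then, since $\mu(\theta_a)$ lies at or to the right of the right endpoint $U_n^a(t)$, it is not in the interior of the sublevel set, so $\KLr{\bar{Y}_n^a}{\mu(\theta_a)} \ge g(t)/n$ (the trivial case $U_n^a(t) = \sup \calM > \mu(\theta_a)$ makes the left event empty).

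Taking the union over $n = 1, \ldots, t$ exhibits $\{\min_{n} U_n^a(t) \le \mu(\theta_a)\}$ as a subset of precisely the event appearing in~\autoref{lem:maximal}, with the stationary mean $\mu(0)$ instantiated as $\mu(\theta_a)$, the threshold $\epsilon$ as $g(t)$, and the union index running up to $t$. Since $t \ge 3$ gives $g(t) > \log t > 1$, the hypothesis $\epsilon > 1$ is met, and the Doeblin assumption on $P$ ensures the constant $C_-^a$ has no dependence on $t$. Invoking the inequality yields
\[
\Pr_{\theta_a}\left(\min_{n=1,\ldots,t} U_n^a(t) \le \mu(\theta_a)\right)
\le
C_-^a \, e \, \lceil g(t) \log t \rceil \, e^{-g(t)}.
\]

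It remains to substitute $g(t) = \log t + 3\log\log t$, for which $e^{-g(t)} = 1/(t (\log t)^3)$ and $\lceil g(t)\log t\rceil \le (\log t)^2 + 3 \log t \log\log t + 1$. The concluding step is the elementary bound $\lceil g(t)\log t\rceil \le 4 (\log t)^2$ for all $t \ge 3$, which reduces to verifying $\log\log t + 1/(3\log t) \le \log t$; setting $u = \log t \ge \log 3 > 1$, this is $u - \log u - 1/(3u) \ge 0$, and the left side is positive at $u = \log 3$ and has derivative $1 - 1/u + 1/(3u^2) > 0$ for $u \ge 1$, hence stays positive. Combining these gives the claimed bound $4 e C_-^a / (t \log t)$. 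I expect the only delicate point to be getting the inequality directions correct in the event inclusion, together with confirming that $g(t) > 1$ and that the ceiling is controlled by $4(\log t)^2$ down to the boundary case $t = 3$; the remainder is direct substitution.
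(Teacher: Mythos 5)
Your proof is correct and takes essentially the same route as the paper's: the same event inclusion (which the paper obtains from~\autoref{eqn:UCB} and the definition of $U_n^a(t)$), followed by an application of~\autoref{lem:maximal} with $\epsilon = g(t)$ and the union taken up to $t$, and the same elementary bound $\lceil g(t)\log t\rceil \le 4(\log t)^2$ before substituting $e^{-g(t)} = 1/(t(\log t)^3)$. You simply spell out the details (the feasibility of $\bar{Y}_n^a$, the check that $g(t)>1$ for $t\ge 3$, and the verification of the ceiling bound at $t=3$) that the paper leaves implicit.
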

\begin{proof}
    \begin{align*}
    \Pr_{\theta_a} \left(
    \min_{n=1,\ldots,t} U_n^a (t) \le \mu (\theta_a)\right)
    &\le \Pr_{\theta_a} \left(
    \bigcup_{n=1}^t \{\mu (\theta_a) > \Bar{Y}_n^a ~\text{and}~
    n \KL{\Bar{Y}_n^a}{\mu (\theta_a)} \ge g (t)\}\right) \\
    &\le C_-^a e \lceil g (t) \log t \rceil e^{- g (t)} 
    \le 4 C_-^a e (\log t)^2 e^{- g (t)}
    = \frac{4 e C_-^a}{t \log t},
    \end{align*}
    where for the first inequality we used~\autoref{eqn:UCB} and the definition of $U_n^a (t)$, while for the second inequality we used~\autoref{lem:maximal}.
\end{proof}

\begin{lemma}
    For every arm $a = 1, \ldots, K$, and for $\mu (\lambda) > \mu (\theta_a)$,
    \begin{align}
    \sum_{n=1}^\infty \Pr_{\theta_a} (\mu (\lambda) \le U_n^a (T))
    &\le
    \frac{g (T)}{\KL{\mu (\theta_a)}{\mu (\lambda)}} + 
    1 +
    8 \sigma_{\theta_a,\lambda}^2
    \left(
    \frac{\KLrd{\mu (\theta_a)}{\mu (\lambda)}}
    {\KLr{\mu (\theta_a)}{\mu (\lambda)}}\right)^2
    \label{eqn:sum-Chernoff} \\
    & \qquad + 2 \sqrt{2 \pi \sigma_{\theta_a,\lambda}^2} \sqrt{\frac{\KLrd{\mu (\theta_a)}{\mu (\lambda)}^2}{\KLr{\mu (\theta_a)}{\mu (\lambda)}^3}} \sqrt{g (T)},\nonumber
    \end{align}
    where $\sigma_{\theta, \lambda}^2 = \sup_{\theta \in [\theta_a, \lambda]} \ddot{\Lambda} (\theta) \in (0, \infty)$, and
    $\KLd{\mu (\theta_a)}{\mu (\lambda)} = \frac{d \KL{\mu}{\mu (\lambda)}}{d \mu} \mid_{\mu = \mu (\theta_a)}$.
\end{lemma}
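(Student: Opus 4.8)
The plan is to turn the event $\{\mu(\lambda)\le U_n^a(T)\}$ into a one-sided deviation event for the empirical mean and then bound the resulting series. Since $\{\mu\in\calM:\KL{\Bar{Y}_n^a}{\mu}\le g(T)/n\}$ is an interval containing $\Bar{Y}_n^a$ (where the divergence vanishes) and $U_n^a(T)$ is its right endpoint, the monotonicity of $\mu\mapsto\KL{\Bar{Y}_n^a}{\mu}$ yields the identity $\{\mu(\lambda)\le U_n^a(T)\}=\{\Bar{Y}_n^a\ge c_n\}$, where $c_n\le\mu(\lambda)$ is defined by $\KL{c_n}{\mu(\lambda)}=g(T)/n$. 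Thus the quantity to control is $\sum_{n\ge1}\Pr_{\theta_a}(\Bar{Y}_n^a\ge c_n)$, and the whole argument becomes one about upper deviations of $\Bar{Y}_n^a$ above a shrinking threshold.

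I would split the sum at the critical index $n_0=g(T)/\KL{\mu(\theta_a)}{\mu(\lambda)}$, the unique value for which $c_{n_0}=\mu(\theta_a)$; this uses the hypothesis $\mu(\lambda)>\mu(\theta_a)$. For $n\le n_0$ one has $c_n\le\mu(\theta_a)$, so I bound each probability trivially by $1$, contributing at most $\lceil n_0\rceil\le g(T)/\KL{\mu(\theta_a)}{\mu(\lambda)}+1$, which accounts for the first two terms of the stated bound. For $n>n_0$ we have $c_n\in(\mu(\theta_a),\mu(\lambda))$, a compact subset of $\calM\cap[\mu(\theta_a),\infty)$, so the Chernoff bound of \autoref{lem:Chernoff} applies and gives $\Pr_{\theta_a}(\Bar{Y}_n^a\ge c_n)\le C_+\,e^{-n\KL{c_n}{\mu(\theta_a)}}$.

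Writing $D=\KL{\mu(\theta_a)}{\mu(\lambda)}$ and $D'=|\KLd{\mu(\theta_a)}{\mu(\lambda)}|$, I would lower bound the exponent by two exponential-family estimates. First, since $\tfrac{d^2}{d\mu^2}\KL{\mu}{\mu(\theta_a)}=1/\ddot\Lambda\ge 1/\sigma_{\theta_a,\lambda}^2$ on $[\mu(\theta_a),\mu(\lambda)]$, and $\KL{\mu}{\mu(\theta_a)}$ has a zero with vanishing first derivative at $\mu=\mu(\theta_a)$, Taylor's theorem gives $\KL{c_n}{\mu(\theta_a)}\ge (c_n-\mu(\theta_a))^2/(2\sigma_{\theta_a,\lambda}^2)$. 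Second, as $\nu\mapsto|\KLd{\nu}{\mu(\lambda)}|$ is decreasing on $[\mu(\theta_a),\mu(\lambda)]$, the mean value theorem gives $D-g(T)/n\le D'(c_n-\mu(\theta_a))$, hence $c_n-\mu(\theta_a)\ge (D/D')(n-n_0)/n$. Combining the two yields $n\KL{c_n}{\mu(\theta_a)}\ge \frac{D^2}{2\sigma_{\theta_a,\lambda}^2 D'^2}\cdot\frac{(n-n_0)^2}{n}$.

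Finally I would estimate the tail in two regimes. For $n_0<n\le 2n_0$ I use $(n-n_0)^2/n\ge(n-n_0)^2/(2n_0)$ and $n_0=g(T)/D$ to reduce the summand to a Gaussian $e^{-a(n-n_0)^2}$ with $a=D^3/(4\sigma_{\theta_a,\lambda}^2 D'^2 g(T))$, and compare the sum to $\int_0^\infty e^{-ax^2}\,dx=\tfrac12\sqrt{\pi/a}$, producing the $\sqrt{g(T)}$ term. For $n>2n_0$ I use $(n-n_0)^2/n\ge(n-n_0)/2$, so the summand decays geometrically and $\sum_m e^{-bm}\le 1/b$ with $b=D^2/(4\sigma_{\theta_a,\lambda}^2 D'^2)$ produces the $\sigma_{\theta_a,\lambda}^2(D'/D)^2$ term. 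The main obstacle is exactly this tail estimate: one must establish the clean quadratic and linear lower bounds on $n\KL{c_n}{\mu(\theta_a)}$ holding uniformly in $n$ (not merely the local second-order behavior near $n_0$), so that the series genuinely compares to an explicit Gaussian integral and an explicit geometric series. The generous numerical constants ($8$ and $2\sqrt{2\pi}$) are there to absorb the factor-of-two slack from the $(n-n_0)^2/n$ splitting, the sum-to-integral comparison, and the Chernoff constant $C_+$ (which equals $1$ in the i.i.d. exponential-family specialization).
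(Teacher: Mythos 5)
Your proposal is correct and follows essentially the same route as the paper: reduce $\{\mu(\lambda)\le U_n^a(T)\}$ to an upper deviation of $\Bar{Y}_n^a$ past the threshold $\mu_{g(T)/n}$, split the sum at $n_0=g(T)/\KLr{\mu(\theta_a)}{\mu(\lambda)}$, apply~\autoref{lem:Chernoff} on the tail, lower-bound the exponent by combining the Pinsker-type quadratic bound with the tangent-line convexity bound, and then treat the near and far regimes as a Gaussian tail and a geometric tail. The only cosmetic difference is that the paper first converts the tail sum into an integral (using monotonicity of $n\mapsto\KLr{\mu_{g(T)/n}}{\mu(\theta_a)}$) and splits it at $x=\KLr{\mu(\theta_a)}{\mu(\lambda)}/2$, which is precisely your split at $n=2n_0$; and, like the paper's own displayed bound, your final constants quietly absorb the Chernoff prefactor $C_+$, which equals $1$ only in the i.i.d.\ specialization.
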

\begin{proof}
    The proof is based on the argument given in Appendix A.2 of~\cite{Cappe-Garivier-Maillard-Munos-Stoltz-13}, adapted though for the case of Markov chains.
    If $\mu (\lambda) \le U_n^a (T)$, and $\Bar{Y}_n^a \le \mu (\lambda)$, then $\KL{\Bar{Y}_n^a}{\mu (\lambda)} \le g (T)/n$.
    Let $\mu_x = \inf \{\mu \le \mu (\lambda) : \KL{\mu}{\mu (\lambda)} \le x\}$. This in turn implies that
    $\KL{\Bar{Y}_n^a}{\mu (\lambda)} \le \KL{\mu_{g (T)/n}}{\mu (\lambda)}$, and using the monotonicity of $\mu \mapsto \KL{\mu}{\mu (\lambda)}$ for $\mu \le \mu (\lambda)$, we further have that $\Bar{Y}_n^a \ge \mu_{g (T)/n}$. This argument shows that,
    \[
    \Pr_{\theta_a} (\mu (\lambda) \le U_n^a (T)) \le 
    \Pr_{\theta_a} (\mu_{g (T)/n} \le \Bar{Y}_n^a).
    \]
    Therefore,
    \[
    \sum_{n=1}^\infty \Pr_{\theta_a} (\mu (\lambda) \le U_n^a (T)) \le 
    \frac{g (T)}{\KL{\mu (\theta_a)}{\mu (\lambda)}} + 1
    + \sum_{n=n_0+1}^\infty \Pr_{\theta_a} (\mu_{g (T)/n} \le \Bar{Y}_n^a),
    \]
    where $n_0 = \left\lceil \frac{g (T)}{\KL{\mu (\theta_a)}{\mu (\lambda)}} \right\rceil$.
    
    Fix $n \ge n_0 + 1$. Then $\KL{\mu (\theta_a)}{\mu (\lambda)} > g (T)/n$, and therefore $\mu_{g (T)/n} > \mu (\theta_a)$.
    Furthermore note that $\mu_{g (T)/n}$ is increasing to $\mu (\lambda)$
    as $n$ increases, therefore $\mu_{g (T)/n}$ lives in the closed interval
    $[\mu (\theta_a), \mu (\lambda)]$, and we can apply~\autoref{lem:Chernoff} for the Markov chain that corresponds to the parameter $\theta_a$, 
    \[
        \Pr_{\theta_a} (\Bar{Y}_n^a \ge \mu_{g (T)/n}) \le 
        C_+^a e^{-n \KL{\mu_{g (T)/n}}{\mu (\theta_a)}}.
    \]
Thus we are left with the task of controlling the sum,
\[
\sum_{n = n_0 + 1}^\infty e^{-n \KL{\mu_{g (T)/n}}{\mu (\theta_a)}}.
\]
First note that by definition $\mu_{g (T)/n}$ is increasing in $n$,
therefore $\KL{\mu_{g (T)/n}}{\mu (\theta_a)}$ is positive and increasing in $n$,
hence we can perform the following integral bound,
\begin{align}
\sum_{n = n_0 + 1}^\infty e^{-n \KL{\mu_{g (T)/n}}{\mu (\theta_a)}}
&\le
\int_{\frac{g (T)}{\KL{\mu (\theta_a)}{\mu (\lambda)}}}^\infty e^{-s \KL{\mu_{g (T)/s}}{\mu (\theta_a)}} ds 
\nonumber \\
&= 
g (T) \int_0^{\KL{\mu (\theta_a)}{\mu (\lambda)}}
\frac{1}{x^2} e^{-\frac{g (T)}{x} \KL{\mu_x}{\mu (\theta_a)}}
dx.\label{eqn:integral}
\end{align}
The function $\mu \mapsto \KL{\mu}{\mu (\lambda)}$ is convex thus,
\[
\KL{\mu}{\mu (\lambda)}  \ge
\KL{\mu (\theta_a)}{\mu (\lambda)} +
\KLd{\mu (\theta_a)}{\mu (\lambda)} (\mu - \mu (\theta_a)),
\]
where $\KLd{\mu (\theta_a)}{\mu (\lambda)} = \frac{d \KL{\mu}{\mu (\lambda)}}{d \mu} \mid_{\mu = \mu (\theta_a)}$.
Plugging in $\mu = \mu_x \ge \mu (\theta_a)$,
for $x \in [0, \KL{\mu (\theta_a)}{\mu (\lambda)}]$,
we obtain
\begin{equation}\label{eqn:convexity}
\KL{\mu (\theta_a)}{\mu (\lambda)} - x 
\le 
\KLd{\mu (\theta_a)}{\mu (\lambda)}
(\mu (\theta_a) - \mu_x).
\end{equation}
From Lemma 8 in~\cite{Moulos-Ananth-19} we have that,
\begin{equation}\label{eqn:Pinkser}
\KL{\mu_x}{\mu (\theta_a)} \ge \frac{(\mu_x - \mu (\theta_a))^2}{2 \sigma_{\theta_a, \lambda}^2},
\end{equation}
where $\sigma_{\theta_a, \lambda}^2 = \sup_{\theta \in [\theta_a, \lambda]} \ddot{\Lambda} (\theta) \in (0, \infty)$.

Combining~\autoref{eqn:convexity} and~\autoref{eqn:Pinkser} we deduce that,
\[
\KL{\mu_x}{\mu (\theta_a)} \ge 
\left(
\frac{\KL{\mu (\theta_a)}{\mu (\lambda)} - x}
{\sqrt{2} \sigma_{\theta_a, \lambda} \KLd{\mu (\theta_a)}{\mu (\lambda)}}
\right)^2.
\]
Now we use this bound and break the integral in~\autoref{eqn:integral}
in two regions,
$I_1 = [0, \KL{\mu (\theta_a)}{\mu (\lambda)}/2]$
and
$I_2 = [\KL{\mu (\theta_a)}{\mu (\lambda)}/2,
\KL{\mu (\theta_a)}{\mu (\lambda)}]$.
In the first region we use the fact that $x \le \KL{\mu (\theta_a)}{\mu (\lambda)}/2$ to deduce that,
\begin{align*}
\int_{I_1}
\frac{1}{x^2} e^{-\frac{g (T)}{x} \KL{\mu_x}{\mu (\theta_a)}}
dx
&\le 
\int_{I_1}
\frac{1}{x^2}
\exp\left\{ 
-\frac{g (T)}{8 \sigma_{\theta_a, \lambda}^2 x} \left(\frac{\KL{\mu (\theta_a)}{\mu (\lambda)}}{\KLd{\mu (\theta_a)}{\mu (\lambda)}}\right)^2
\right\}
dx \\
&\le
\frac{8 \sigma_{\theta_a, \lambda}^2}{g (T)} \left(\frac{\KLd{\mu (\theta_a)}{\mu (\lambda)}}
{\KL{\mu (\theta_a)}{\mu (\lambda)}}\right)^2.
\end{align*}
In the second region we use the fact that $\KL{\mu (\theta_a)}{\mu (\lambda)}/2 \le x \le \KL{\mu (\theta_a)}{\mu (\lambda)}$ to deduce that,
\begin{align*}
\int_{I_2}
\frac{1}{x^2} e^{-\frac{g (T)}{x} \KL{\mu_x}{\mu (\theta_a)}}
dx
&\le   
\int_{I_2}
\frac{4 \exp\left\{
-\frac{(x-\KL{\mu (\theta_a)}{\mu (\lambda)})^2}
{2 \Sigma_{\theta_a,\lambda}}
\right\}}
{\KL{\mu (\theta_a)}{\mu (\lambda)}^2}
dx \\
&\le 
\int_{-\infty}^{\KL{\mu (\theta_a)}{\mu (\lambda)}}
\frac{4 \exp\left\{
-\frac{(x-\KL{\mu (\theta_a)}{\mu (\lambda)})^2}
{2 \Sigma_{\theta_a,\lambda}}
\right\}}
{\KL{\mu (\theta_a)}{\mu (\lambda)}^2}
dx \\
&= 
\frac{2 \sqrt{2 \pi \sigma_{\theta_a, \lambda}^2}}{\sqrt{g (T)}}
\sqrt{\frac{\KLd{\mu (\theta_a)}{\mu (\lambda)}^2}
{\KL{\mu (\theta_a)}{\mu (\lambda)}^3}},
\end{align*}
where $\Sigma_{\theta_a,\lambda} = \frac{\sigma_{\theta_a, \lambda}^2 \KLd{\mu (\theta_a)}{\mu (\lambda)}^2 \KL{\mu (\theta_a)}{\mu (\lambda)}}{g (T)}$.
\end{proof}

\begin{lemma}
    For every arm $a = 1, \ldots, K$,
    \begin{equation}\label{eqn:means}
    \Pr_{\theta_a} \left(\max_{n = \lceil \delta t \rceil, \ldots, t} |\Bar{Y}_n^a - \mu (\theta_a)| \ge \epsilon \right) \le \frac{c  \eta^{\delta t}}{1-\eta},
    ~\text{for}~ \delta \in (0, 1), ~ \epsilon > 0,
    \end{equation}
    where $\eta = \eta (\btheta, \epsilon) \in (0, 1)$, and $c = c (\btheta, \epsilon)$ are constants with respect to $t$.
\end{lemma}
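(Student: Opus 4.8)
The plan is to reduce the two-sided maximal deviation to two one-sided large-deviation estimates, each handled by the Chernoff bound of \autoref{lem:Chernoff}, and then to sum a geometric series in $n$.

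First I would observe that it suffices to establish the bound for small $\epsilon$. Since $\{|\Bar{Y}_n^a - \mu(\theta_a)| \ge \epsilon\} \subseteq \{|\Bar{Y}_n^a - \mu(\theta_a)| \ge \epsilon'\}$ whenever $0 < \epsilon' \le \epsilon$, I may replace $\epsilon$ by $\epsilon' = \min(\epsilon, \epsilon_0)$, where $\epsilon_0 > 0$ is chosen so that $\mu(\theta_a) \pm \epsilon_0 \in \calM$ for every arm $a$; this is possible because $\calM$ is an open interval and there are finitely many arms. The resulting constants will then depend on $\epsilon$ only through $\epsilon'$, hence only on $\btheta$ and $\epsilon$, so from now on I assume $\mu(\theta_a) \pm \epsilon \in \calM$.

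Next I would split $\{|\Bar{Y}_n^a - \mu(\theta_a)| \ge \epsilon\} = \{\Bar{Y}_n^a \ge \mu(\theta_a)+\epsilon\} \cup \{\Bar{Y}_n^a \le \mu(\theta_a)-\epsilon\}$ and bound each tail. For the upper tail I apply \autoref{lem:Chernoff} to the chain with parameter $\theta_a$ (regarded as the generator of its own exponential family, with stationary mean $\mu(\theta_a)$), with reward $f$ and the compact singleton $F = \{\mu(\theta_a)+\epsilon\} \subseteq \calM \cap [\mu(\theta_a), \infty)$, obtaining
\[
\Pr_{\theta_a}\left(\Bar{Y}_n^a \ge \mu(\theta_a)+\epsilon\right) \le C_+^a\, e^{-n \KLr{\mu(\theta_a)+\epsilon}{\mu(\theta_a)}},
\]
with $\KLr{\mu(\theta_a)+\epsilon}{\mu(\theta_a)} > 0$ since the divergence rate vanishes only when the two means coincide. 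For the lower tail I would invoke the same lemma for the reward function $-f$, whose stationary mean under $\theta_a$ is $-\mu(\theta_a)$ and under which $\{\Bar{Y}_n^a \le \mu(\theta_a)-\epsilon\}$ becomes the upper-tail event $\{\frac{1}{n}\sum_{k=1}^n (-Y_k^a) \ge -\mu(\theta_a)+\epsilon\}$; this produces a bound $\tilde{C}^a\, e^{-n d_-^a}$ with $d_-^a > 0$. Adding the two tails gives, for every $n$,
\[
\Pr_{\theta_a}\left(|\Bar{Y}_n^a - \mu(\theta_a)| \ge \epsilon\right) \le c\, \eta^n, \qquad \eta = e^{-\min\left(\KLr{\mu(\theta_a)+\epsilon}{\mu(\theta_a)},\, d_-^a\right)} \in (0,1),
\]
where $c = C_+^a + \tilde{C}^a$, and both $c$ and $\eta$ depend only on $\btheta$ and $\epsilon$. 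A union bound over $n$ followed by extending the sum to infinity then finishes the argument,
\[
\Pr_{\theta_a}\left(\max_{n = \lceil \delta t\rceil, \ldots, t} |\Bar{Y}_n^a - \mu(\theta_a)| \ge \epsilon\right) \le \sum_{n=\lceil \delta t\rceil}^{\infty} c\, \eta^n = \frac{c\, \eta^{\lceil \delta t\rceil}}{1-\eta} \le \frac{c\, \eta^{\delta t}}{1-\eta},
\]
using $\eta \in (0,1)$ and $\lceil \delta t\rceil \ge \delta t$.

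The hard part will be the lower-tail estimate, because \autoref{lem:Chernoff} is stated only for deviations above the stationary mean; the fix is to reflect through $-f$ and to check that this reflected problem is a genuine instance of the lemma — namely that $-f$ is nonconstant (true, as $f$ is), that $\{-\mu(\theta_a)+\epsilon\}$ lies in the mean-parameter space of the family generated by $-f$ (which is the reflection $-\calM$ of $\calM$, so $\mu(\theta_a)-\epsilon \in \calM$ suffices, and this is exactly the content of the reduction to small $\epsilon$), and that the corresponding rate $d_-^a$ is strictly positive. The remaining points are routine but worth noting: the Chernoff constant is uniform over the arbitrary initial distribution $q_{\theta_a}$ — in the proof of \autoref{lem:Chernoff} it is bounded out via $\sum_{x_0} q(x_0) v_\theta(x_0) \le \max_x v_\theta(x)$ — so the estimate holds irrespective of the initial law, and the two rates merge into a single $\eta < 1$ that makes the geometric series summable.
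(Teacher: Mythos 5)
Your proof is correct and follows essentially the same route as the paper's: split the two-sided deviation into an upper and a lower tail, bound each by an exponentially tilted Markov inequality with a strictly positive rate, take a union bound over $n \ge \lceil \delta t \rceil$, and sum the geometric series. The only real difference is packaging: the paper reruns the computation from the proof of \autoref{lem:Chernoff} directly with free tilting parameters $\theta \ge 0$ and $\eta \le 0$ chosen so that $\theta(\mu(\theta_a)+\epsilon) - \Lambda_a(\theta) > 0$ and $\eta(\mu(\theta_a)-\epsilon) - \Lambda_a(\eta) > 0$, which never requires $\mu(\theta_a) \pm \epsilon$ to lie in $\calM$, whereas your use of \autoref{lem:Chernoff} as a black box forces both the truncation to $\epsilon' = \min(\epsilon, \epsilon_0)$ and the reflection through $-f$ for the lower tail --- two extra maneuvers that you identify and execute correctly.
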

\begin{proof}
    Using the same technique as in the proof of~\autoref{lem:Chernoff}, we have that for any $\theta \ge 0$ and any $\eta \le 0$,
    \begin{align*}
    \Pr_{\theta_a} \left(\max_{n = \lceil \delta t \rceil, \ldots, t} |\Bar{Y}_n^a - \mu (\theta_a)| \ge \epsilon \right)
    &\le \sum_{n = \lceil \delta t \rceil}^\infty
    \max_{x, y \in S}
    \frac{v_\theta^a (y)}{v_\theta^a (x)}
    e^{-n (\theta (\mu (\theta_a) + \epsilon) - \Lambda_a (\theta))} \\
    &\qquad + 
    \sum_{n = \lceil \delta t \rceil}^\infty
    \max_{x, y \in S}
    \frac{v_\eta^a (y)}{v_\eta^a (x)}
    e^{-n (\eta (\mu (\theta_a) - \epsilon) - \Lambda_a (\eta))},
    \end{align*}
    where by $\Lambda_a (\theta)$ we denote the log-Perron-Frobenious eigenvalue generated by $P_{\theta_a}$, and similarly by $v_\theta^a$ the corresponding right Perron-Frobenius eigenvector.
    
    By picking $\theta = \theta_\epsilon^a$ large enough, and $\eta = \eta_\epsilon^a$ small enough, we can ensure that
    $\theta (\mu (\theta_a) + \epsilon) - \Lambda_a (\theta) > 0$, and $\eta (\mu (\theta_a) - \epsilon) - \Lambda_a (\eta) > 0$, and so there are constants $\eta = \eta (\btheta, \epsilon) \in (0, 1)$ and
    $c = c (\btheta, \epsilon)$, such that for any $a = 1, \ldots, K$,
    \[
    \Pr_{\theta_a} \left(\max_{n = \lceil \delta t \rceil, \ldots, t} |\Bar{Y}_n^a - \mu (\theta_a)| \ge \epsilon \right) \le 
    c \sum_{n = \lceil \delta t \rceil}^\infty \eta^n \le \frac{c \eta^{\delta t}}{1 - \eta}.
    \]
\end{proof}
\section{Analysis of~\autoref{alg:KL-UCB}}\label{a:analysis}

As a proxy for the regret we will use the following quantity which involves directly the number of times each arm $a \in \{1, \ldots, N\}$ hasn't been played, and the number of times each arm $b \in \{L+1, \ldots, K\}$ has been played,
\begin{equation}\label{eqn:regret-proxy}
\Tilde{R}_\btheta^\bphi (T) =
\sum_{a=1}^N (\mu (\theta_a) - \mu (\theta_M))
\E_\btheta^\bphi [T - N_a (T)] \\
+
\sum_{b=L+1}^K (\mu (\theta_M) - \mu (\theta_b))
\E_\btheta^\bphi [N_b (T)].
\end{equation}
For the IID case $\Tilde{R}_\btheta^\bphi (T) = R_\btheta^\bphi (T)$,
and in the more general Markovian case $\Tilde{R}_\btheta^\bphi (T)$ is just a constant term apart from the expected regret $R_\btheta^\bphi (T)$.
Note that a feature that makes the case of multiple plays more delicate than the case of single plays, even for IID rewards, is the presence of the first summand in~\autoref{eqn:regret-proxy}.
For this we also need to analyze the number of times each of the best $N$
arms hasn't been played.
\begin{lemma}\label{lem:regret-proxy}
\begin{align*}
\left| R_\btheta^\bphi (T) -
\Tilde{R}_\btheta^\bphi (T)\right| \le
\sum_{a = 1}^K R_a \cdot \sum_{x \in S} |f (x)|,
\end{align*}
where $R_a = \E_{\theta_a} \left[\inf\{n \ge 1 : X_{n+1}^a = X_1^a\}\right] < \infty$.
\end{lemma}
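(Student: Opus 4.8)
The plan is to reduce the claim to a single per-arm estimate through a bookkeeping identity, and then to control each per-arm term by a regeneration argument for the reward chain.

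First I would rewrite both $R_\btheta^\bphi(T)$ and $\Tilde{R}_\btheta^\bphi(T)$ in terms of the play counts. Since exactly $M$ arms are played at every round, there is the deterministic identity $\sum_{a=1}^K N_a(T) = MT$, and one checks by the same grouping used to define $\Tilde{R}_\btheta^\bphi(T)$ (splitting the arms into the top $N$, the group tied at level $\mu(\theta_M)$, and the bottom $K-L$, and using $\mu(\theta_a) = \mu(\theta_M)$ on the tie group) that
\begin{equation*}
T \sum_{a=1}^M \mu(\theta_a) - \sum_{a=1}^K \mu(\theta_a)\, \E_\btheta^\bphi[N_a(T)] = \Tilde{R}_\btheta^\bphi(T).
\end{equation*}
Subtracting this from $R_\btheta^\bphi(T) = T\sum_{a=1}^M \mu(\theta_a) - \sum_a \E_\btheta^\bphi[\sum_{n=1}^{N_a(T)} Y_n^a]$ collapses everything to the Markovian correction
\begin{equation*}
R_\btheta^\bphi(T) - \Tilde{R}_\btheta^\bphi(T) = \sum_{a=1}^K \Big( \mu(\theta_a)\,\E_\btheta^\bphi[N_a(T)] - \E_\btheta^\bphi\Big[\sum_{n=1}^{N_a(T)} Y_n^a\Big]\Big),
\end{equation*}
so by the triangle inequality it suffices to bound each summand in absolute value by $R_a \sum_{x\in S}|f(x)|$.

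Next, fixing an arm $a$, I would analyze its reward process in local time via regeneration at the first observed state. Taking the cycles to be the excursions between successive returns of $\{X_n^a\}$ to $X_1^a$, the strong Markov property makes the (length, reward) pairs of the cycles i.i.d., and the ergodic ratio formula $\mu(\theta_a) = \E[\text{cycle reward}]/\E[\text{cycle length}]$ shows that each complete cycle contributes expectation zero to $\sum_{n=1}^{N_a(T)}(Y_n^a - \mu(\theta_a))$. The only surviving term is the final, incomplete cycle, whose accumulated reward in absolute value is at most the total absolute reward of one excursion; using the standard fact that the expected number of visits to $x$ during an excursion from $z$ equals $\pi_{\theta_a}(x)/\pi_{\theta_a}(z)$, this is bounded in expectation by $\sum_x |f(x)|\pi_{\theta_a}(x)\,\E[1/\pi_{\theta_a}(X_1^a)] = R_a \sum_x|f(x)|\pi_{\theta_a}(x) \le R_a \sum_x|f(x)|$, where I have used $R_a = \E[1/\pi_{\theta_a}(X_1^a)]$ and $\pi_{\theta_a}(x)\le 1$. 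Finiteness of $R_a$ follows since a finite irreducible chain has finite expected return times.

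The main obstacle is that $N_a(T)$ is \emph{not} a stopping time for arm $a$'s local filtration alone, because the allocation rule inspects all arms at each round; consequently the optional-stopping step that kills the completed cycles is not immediate. To legitimize it, I would condition on the full trajectories of the other $K-1$ arms: since the arms evolve independently and the allocation is non-anticipating, given this conditioning the map from arm $a$'s local reward history to $N_a(T)$ becomes adapted, so that $N_a(T)$ is a genuine stopping time in arm $a$'s local time and Wald's identity applies to the zero-mean i.i.d. cycle sums. Integrating the conditional bound back over the other arms yields the per-arm estimate, and summing over $a$ completes the proof.
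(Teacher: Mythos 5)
Your proposal is correct in substance and shares its skeleton with the paper's proof: the algebraic identity $T\sum_{a=1}^M\mu(\theta_a)-\sum_{a=1}^K\mu(\theta_a)\E_\btheta^\bphi[N_a(T)]=\Tilde{R}_\btheta^\bphi(T)$ (via $\sum_a N_a(T)=MT$ and the ties at level $\mu(\theta_M)$) is exactly the paper's second computation, and both arguments reduce the claim to the per-arm estimate $\bigl|\mu(\theta_a)\E_\btheta^\bphi[N_a(T)]-\E_\btheta^\bphi[\sum_{n\le N_a(T)}Y_n^a]\bigr|\le R_a\sum_{x\in S}|f(x)|$. Where you genuinely diverge is in how that estimate is obtained: the paper invokes \autoref{lem:renewal} (Lemma 2.1 of Anantharam--Varaiya--Walrand) as a black box, applying $|\E[N(x,\tau)]-\pi(x)\E[\tau]|\le R$ state by state with $\tau=N_a(T)$ and summing against $|f(x)|$, whereas you reprove that renewal estimate from scratch by regeneration at $X_1^a$ together with Wald's identity (your observation $R_a=\E[1/\pi_{\theta_a}(X_1^a)]$ is correct and is what closes the excursion bounds). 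Your route is more self-contained, and it has the merit of making explicit the stopping-time subtlety --- that $N_a(T)$ is a stopping time for arm $a$'s local filtration only after conditioning on the other arms' trajectories and using their independence --- which the paper passes over silently when it applies \autoref{lem:renewal}.

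One step is too quick. The surviving incomplete-cycle term is $\E[\sum_{n\in I}(Y_n^a-\mu(\theta_a))]$, and bounding it by ``the total absolute reward of one excursion'' drops the centering contribution $|\mu(\theta_a)|\,\E[|I|]$, which by itself costs another $R_a\sum_x|f(x)|\pi_{\theta_a}(x)$ and leaves you with a factor of $2$ that your claimed inequality $R_a\sum_x|f(x)|\pi_{\theta_a}(x)\le R_a\sum_x|f(x)|$ does not absorb. The clean fix is to run your regeneration argument state by state, i.e.\ bound $|\E[N_a(x,N_a(T))]-\pi_{\theta_a}(x)\E[N_a(T)]|$ directly: there the incomplete-cycle contribution is a difference of two \emph{nonnegative} quantities, each at most $\pi_{\theta_a}(x)R_a$, hence at most $\pi_{\theta_a}(x)R_a\le R_a$ in absolute value, and summing against $|f(x)|$ gives exactly the stated constant. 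This state-by-state version is precisely \autoref{lem:renewal}, so the repaired argument amounts to a proof of that lemma plus the paper's computation; the slip affects only the constant, which is harmless downstream, but it should be fixed if the lemma is to be proved as stated.
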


We start the analysis by establishing the relation between the expected regret,~\autoref{eqn:regret}, and its proxy,~\autoref{eqn:regret-proxy}.
For this we will need the following lemma.

\begin{lemma}[Lemma 2.1 in~\cite{Ananth-Varaiya-Walrand-II-87}]\label{lem:renewal}
Let $\{X_n\}_{n \in \Intp{}}$ be a Markov chain on a finite state space $S$, with irreducible transition probability matrix $P$, stationary distribution $\pi$, and initial distribution $q$.
Let $\calF_n$ be the $\sigma$-field generated by $X_0, \ldots, X_n$.
Let $\tau$ be a stopping time with respect to the filtration $\{\calF_n\}_{n \in \Intp{}}$ such that $\E [\tau] < \infty$.
Define $N (x, n)$ to be the number of visits to state $x$ from time $1$ to time $n$, i.e. $N (x, n) = \sum_{k=1}^n I \{X_k = x\}$. Then
\[
\left|\E [N (x, \tau)]  - \pi (x) \E [\tau]\right|\le R,
~\text{for}~x\in S,
\]
where $R = \E [\inf \{n \ge 1 : X_{n+1} = X_1\}] < \infty$.
\end{lemma}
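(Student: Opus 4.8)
The plan is to exploit the regenerative structure of the chain about the state it occupies at time $1$. Fix the target state $x$ and write $y = X_1$. Conditionally on $X_1 = y$, the strong Markov property makes $(X_{1+m})_{m \ge 0}$ a Markov chain started from $y$, so I would set up regeneration epochs at the successive returns to $y$: let $\rho_0 = 1$ and $\rho_j = \inf\{n > \rho_{j-1} : X_n = y\}$ for $j \ge 1$. The blocks $[\rho_{j-1}, \rho_j)$ are then i.i.d.\ (given $y$), and two elementary renewal/Kac identities do the bookkeeping: the expected length of a block is $\E_y[\rho_1 - \rho_0] = 1/\pi(y)$, and the expected number of visits to $x$ inside a block is $\pi(x)/\pi(y)$. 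In particular the per-step visiting rate to $x$ within a regeneration cycle is exactly $\pi(x)$, which is the cancellation I will need.

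Next I would introduce $J = \max\{j \ge 0 : \rho_j \le \tau\}$, the number of regeneration cycles completed by time $\tau$. A routine check shows $J$ is a stopping time for the block filtration $\{\calF_{\rho_j}\}$ and that $\E_y[J] \le \E_y[\tau] < \infty$. Applying Wald's identity separately to the i.i.d.\ block lengths and to the i.i.d.\ per-block visit counts, and using the rate identity from the previous paragraph, I obtain the exact equality $\E_y[N(x, \rho_J - 1)] = \pi(x)\,\E_y[\rho_J - 1]$: over an integer number of complete cycles the count and its stationary prediction agree on the nose. This is the engine of the proof, as it confines all the error to the single incomplete cycle straddling $\tau$.

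It then remains to compare the truncation at $\tau$ with the truncation at the last regeneration $\rho_J - 1$. Writing $N(x,\tau) = N(x,\rho_J - 1) + (\text{visits to } x \text{ during } [\rho_J, \tau])$ and $\tau = (\rho_J - 1) + (\tau - \rho_J + 1)$, and using that the number of visits during the leftover stretch lies between $0$ and its length $\tau - \rho_J + 1$, the complete-cycle cancellation yields
\[
\left| \E_y[N(x,\tau)] - \pi(x)\,\E_y[\tau] \right| \le \E_y[\tau - \rho_J + 1].
\]
I would then seek to control the leftover stretch by a single fresh return time to $y$: it is contained in the excursion $[\rho_J, \rho_{J+1})$, whose conditional law, by the strong Markov property at the stopping time $\rho_J$, is that of one return excursion from $y$, so that the target bound is $\E_y[\tau - \rho_J + 1] \le \E_y[\rho_1 - \rho_0] = 1/\pi(y)$. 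Averaging over $y = X_1 \sim qP$ and invoking Kac's identity $1/\pi(y) = \E_y[\inf\{n \ge 1 : X_{n+1} = y\}]$ then recognizes the averaged right-hand side as $R = \E[\inf\{n \ge 1 : X_{n+1} = X_1\}]$, which is finite because $S$ is finite and $\pi$ is everywhere positive by irreducibility.

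The main obstacle is precisely this last control of the straddling cycle. Because $\tau$ is an arbitrary stopping time rather than a regeneration epoch, the leftover stretch cannot be treated as a generic block and must be handled through the strong Markov property at $\rho_J$; keeping its expected contribution pinned to a single return time $1/\pi(y)$, rather than letting it inflate to a size-biased cycle length, is the delicate point on which the sharpness of the constant $R$ rests, and it is here that the finiteness and the exact value of $R$ must be extracted with care.
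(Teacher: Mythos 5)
The paper does not actually prove this lemma---it is imported verbatim from Anantharam--Varaiya--Walrand (their Lemma 2.1)---so your argument must be judged on its own merits, and it contains a fatal gap exactly at the point you flag as ``delicate.'' The random index $J = \max\{j \ge 0 : \rho_j \le \tau\}$ is \emph{not} a stopping time for the block filtration $\{\calF_{\rho_j}\}$: the event $\{J = j\} = \{\rho_j \le \tau < \rho_{j+1}\}$ requires observing the chain beyond time $\rho_j$ (the standard renewal-theoretic fact is that $J+1$ is a stopping time, $J$ is not). This invalidates both of your key moves at once. First, Wald's identity cannot be applied to $\sum_{j \le J}$, so your ``engine''---the exact equality $\E_y[N(x,\rho_J - 1)] = \pi(x)\,\E_y[\rho_J - 1]$---is unjustified, and in fact false: conditioning a cycle on having been \emph{completed} before $\tau$ biases it, so even the completed cycles need not have stationary statistics. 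Second, the strong Markov property cannot be invoked at $\rho_J$, so the straddling excursion $[\rho_J, \rho_{J+1})$ is not a fresh return excursion from $y$; it is size-biased (the inspection paradox), and its conditional mean length can exceed $1/\pi(y)$ by an arbitrary factor. Wald does hold at the stopping time $J+1$, but then the leftover term is precisely the size-biased straddling cycle, which is the quantity you cannot bound by $1/\pi(y)$.

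A concrete example shows the damage is not repairable along your route. Take $S = \{0,1,\dots,m\}$ with $P(0,0) = 1 - \tfrac1m$, $P(0,1) = \tfrac1m$, $P(i,i+1) = 1$ for $1 \le i \le m-1$, $P(m,0) = 1$, start at $X_0 = 0$, let $x = 0$, and let $\tau = \inf\{n \ge 1 : X_n = 1\}$ (a stopping time with $\E[\tau] = m < \infty$). Here $\pi(0) = \tfrac12$, and conditionally on $X_1 = 0$ every completed cycle before $\tau$ is a length-one self-loop at $0$, so $\E[N(0,\rho_J - 1)] = \E[\rho_J - 1] \ne \pi(0)\,\E[\rho_J - 1]$: your exact cancellation fails outright, because the completed cycles are exactly the conditioned-to-be-short ones. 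Moreover $\E[N(0,\tau)] - \pi(0)\E[\tau] = (m-1) - \tfrac{m}{2} = \tfrac{m}{2} - 1$, while $R = \bigl(1 - \tfrac1m\bigr)\cdot 2 + \tfrac1m \cdot 2m = 4 - \tfrac2m$, so for $m \ge 12$ the discrepancy even exceeds the constant $R$ as stated here. This indicates that no proof can close your argument with the constant $\E_y[\rho_1 - \rho_0] = 1/\pi(y)$ for arbitrary stopping times; correct arguments of this type must control the boundary error by worst-case expected \emph{hitting} times rather than the return time of $X_1$---for instance via optional stopping applied to $M_n = N(x,n) - n\pi(x) + h(X_n)$, with $h$ solving the Poisson-type equation $h - Ph = P(\cdot,x) - \pi(x)\one$, which yields the error term $\E[h(X_0)] - \E[h(X_\tau)]$, bounded by the oscillation of $h$.
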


\begin{proof}[Proof of~\autoref{lem:regret-proxy}.] \hfill\break
First note that,
\[
S_T = \sum_{a=1}^{K} \sum_{x \in S}
f (x) N_a (x, N_a (T)).
\]
For each $a \in [K]$, using first the triangle inequality, and then~\autoref{lem:renewal} for the stopping time $N_a (T)$, we obtain,
\begin{align*}
& \left|\sum_{x \in S} f (x) (\E_\btheta^\bphi [N_a (x, N_a (T))] -
\pi_{\theta_a} (x) \E_\btheta^\bphi [N_a (T)]) \right| \\
& \qquad\le 
\sum_{x \in S} |f (x)| \left|  \E_\btheta^\bphi [N_a (x, N_a (T))] -
\pi_{\theta_a} (x) \E_\btheta^\bphi [N_a (T)] \right| \\
& \qquad\le R_a \cdot \sum_{x \in S} |f (x)|.
\end{align*}
Hence summing over $a \in [K]$, and using the triangle inequality, we see that,
\[
\left|S_T - \sum_{a=1}^K \mu (\theta_a) \Etheta [N_a (T)]
\right| \le \sum_{a=1}^K R_a \cdot \sum_{x \in S} |f (x)|.
\]
To conclude the proof note that,
\begin{align*}
& T \sum_{a=1}^M \mu (\theta_a) -
\sum_{a=1}^K \mu (\theta_a) \Etheta [N_a (T)] \\
&\qquad=
\sum_{a=1}^N \mu (\theta_a) \Etheta [T - N_a (T)] + \mu (\theta_M) (M-N)
- \mu (\theta_M) \sum_{a=N+1}^K  \Etheta [N_{a} (T)] \\
&\qquad\qquad+
\sum_{b=L+1}^K (\mu (\theta_M) - \mu (\theta_b)) \Etheta [N_b (T)] \\
&\qquad= 
\sum_{a=1}^N (\mu (\theta_a) - \mu (\theta_M)) \Etheta [T - N_a (T)] +
\sum_{b=L+1}^K (\mu (\theta_M) - \mu (\theta_b)) \Etheta [N_b (T)],
\end{align*}
where in the last equality we used the fact that $\sum_{a=1}^N \Etheta [N_a (T)] + \sum_{a=N+1}^K \Etheta [N_a (T)] = T M$.
\end{proof}

Next we show that~\autoref{alg:KL-UCB} is well-defined.
\begin{proof}[Proof of~\autoref{prop:well-defined}.] \hfill\break
Recall that $\sum_{a \in [K]} N_a (t) = t M$,
and so there exists an arm $a_1$ such that $N_{a_1} (t) \ge t M/K$.
Then $\sum_{a \in [K] - \{a_1\}} N_a (t) \ge t (M-1)$, 
and so there exists an arm $a_2 \neq a_1$ such that $N_{a_2} (t) \ge t (M-1)/(K-1)$. Inductively we can see that there exist $M$ distinct arms
$a_1, \ldots, a_M$ such that $N_{a_i} (t) \ge t (M-i+1)/(K-i+1) \ge t/K > \delta t$, for $i=1 ,\ldots, M$.
\end{proof}

\subsection{Sketch for the rest of the analysis}

Due to~\autoref{lem:regret-proxy}, it suffices to upper bound the proxy for the expected regret given in~\autoref{eqn:regret-proxy}.
Therefore, we can break the analysis in two parts:
upper bounding $\Etheta [T - N_a (T)]$, for $a = 1, \ldots, N$,
and upper bounding $\Etheta [N_b (T)]$, for $b = L+1, \ldots, K$.

For the first part, we show in~\autoref{a:analysis} that the expected number of times that an arm $a \in \{1, \ldots, N\}$
hasn't been played, is of the order of $O (\log \log T)$.
\begin{lemma}\label{lem:best-not-played}
For every arm $a = 1, \ldots, N$,
\[
\E_\btheta^\bphi [T - N_a (T)] \le 
\frac{4 e \gamma^2 N C \left\lceil\frac{2 \log \gamma}{\log \frac{1}{\delta}}\right\rceil}{\log \gamma} \log \log T
+ \gamma^{r_0} +
\frac{c \gamma^2 \eta^\delta K}{(1-\eta) (1-\eta^\delta)^3},
\]
where $\gamma, r_0, \eta, c$ and $C$ are constants with respect to $T$.
\end{lemma}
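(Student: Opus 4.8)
The plan is to reduce $\Etheta[T - N_a(T)]$ to a sum of per-round probabilities that the good arm $a$ fails to be played, and then to show that each such event forces either an underestimating upper confidence bound for $a$ or an anomalously large empirical mean for some arm whose true mean is at most $\mu(\theta_M)$; the former summand will be $O(\log\log T)$ by the maximal inequality \autoref{lem:maximal}, and the latter will be $O(1)$ by \eqref{eqn:means}. Since the initialization pulls each arm exactly $M$ times, I would first record that
\[
\Etheta[T - N_a(T)] = (K-M) + \sum_{t=K}^{T-1}\Prtheta(a \notin \phi_{t+1}),
\]
so that the whole task is to bound this sum for a fixed $a \in \{1,\ldots,N\}$.

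The core is a structural observation about \autoref{alg:KL-UCB}. Fix a round $t$ and consider the favorable event $\calE_t$ on which (i) $U_a(t) > \mu(\theta_a)$, and (ii) every arm $c$ with $\mu(\theta_c) \le \mu(\theta_M)$ lying in $W_t$ is concentrated enough that $\bar{Y}_c(t) < \mu(\theta_a)$, with $\bar{Y}_a(t)$ exceeding all such $\bar{Y}_c(t)$ whenever $a \in W_t$. Because $|L_t| = M > N$, the set $L_t$ always contains at least one arm $c$ with $\mu(\theta_c) \le \mu(\theta_M)$; under (ii) such an arm has the smallest empirical mean in $L_t$, so if $a \in W_t$ then $a$ belongs to $L_t$ and is never the minimizer removed by the swap step, whence $a \in \phi_{t+1}$. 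Crucially, the round-robin candidate is examined regardless of $W_t$-membership: when $b \equiv t+1 \equiv a \pmod{K}$, part (i) gives $\min_{c \in L_t}\bar{Y}_c(t) < \mu(\theta_a) < U_a(t)$ (the minimum falling below $\mu(\theta_a)$ because of the non-good arm in $L_t$), which triggers the swap and plays $a$ even while $a$ is still under-played. A short induction on this last point, using that $a$ is revisited every $K$ rounds and that $\delta < 1/K$, shows that on the favorable events $N_a(t)$ grows at rate essentially $1/K$, so that after a burn-in of at most $\gamma^{r_0}$ rounds arm $a$ enters and remains in $W_t$; this is the source of the additive constant $\gamma^{r_0}$.

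It then remains to bound $\sum_t \Prtheta(\calE_t^c)$. The empirical-mean part is handled by a union bound over the $K$ arms in \eqref{eqn:means}, which is geometric in $t$ and sums to the constant $\tfrac{c\gamma^2\eta^\delta K}{(1-\eta)(1-\eta^\delta)^3}$. For the confidence-bound part, $\Prtheta(U_a(t) \le \mu(\theta_a)) \le \Prtheta(\min_{n\le t}U_n^a(t) \le \mu(\theta_a)) \le 4eC_-^a/(t\log t)$ by \eqref{eqn:maximal}; peeling the horizon into the geometric blocks $[\gamma^{r},\gamma^{r+1})$ used in the proof of \autoref{lem:maximal}, the contribution of block $r$ is of order $1/(r\log\gamma)$, and summing over the $O(\log T/\log\gamma)$ blocks produces the leading $\tfrac{4e\gamma^2 NC}{\log\gamma}\left\lceil\tfrac{2\log\gamma}{\log(1/\delta)}\right\rceil\log\log T$ term, with $C = \max_a C_-^a$ and the factor $\left\lceil 2\log\gamma/\log(1/\delta)\right\rceil$ accounting for how far past failures can set back the $W_t$-membership of $a$. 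The main obstacle is exactly this coupling, which is absent in the single-play analysis: one cannot conclude $a \in \phi_{t+1}$ from $\calE_t$ alone, since $a \in W_t$ is itself a consequence of earlier favorable rounds, so the delicate step is to quantify how a failure at an earlier time can evict $a$ from $W_t$ and to charge the resulting missed plays back to the summable failure events — this is where the interplay between the top-$M$ selection $L_t$, the round-robin swap, and the constraint $\delta < 1/K$ must be managed with care.
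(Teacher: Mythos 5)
Your architecture matches the paper's: reduce $\Etheta[T-N_a(T)]$ to $\sum_t \Prtheta(a\notin\phi_{t+1})$, absorb an initial segment into the additive constant $\gamma^{r_0}$, and split the failure event into an empirical-mean deviation (controlled by \eqref{eqn:means}, geometric in $t$, summing to the $O(1)$ term) and an under-shooting KL upper confidence bound (controlled by the maximal inequality via \eqref{eqn:maximal}, summing to the $O(\log\log T)$ term). The structural claim you make — that on the favorable event a suboptimal arm sits at the bottom of $L_t$, so $a$ survives the top-$M$ selection when $a\in W_t$ and is injected by the round-robin swap when its turn comes — is exactly the content of the paper's \autoref{lem:always-best-I}, imported from Lemma 5.3 of Anantharam--Varaiya--Walrand.

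The one genuine gap is the step you yourself flag as ``to be managed with care'' and then do not manage: your favorable event $\calE_t$ is defined per round, but $a\in\phi_{t+1}$ cannot be deduced from conditions at round $t$ alone, because $a\in W_t$ (equivalently $N_a(t)\ge\lceil\delta t\rceil$) is a consequence of what happened at earlier rounds. The paper resolves this by defining the good events $A_r$ and $B_r$ over the \emph{doubled} block $\gamma^{r-1}\le t\le\gamma^{r+1}$ — one full block of lookback — so that \autoref{lem:always-best-I} can run its induction entirely inside the block and conclude $[N]\subset\phi_{t+1}$ for all $t\in[\gamma^r,\gamma^{r+1}]$; the constants $\gamma\ge\lceil(1-K\delta)^{-1}\rceil+2$ and $r_0$ are chosen precisely so that one block of favorable history suffices to restore and maintain $W_t$-membership. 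Two smaller inaccuracies: the event $B_r$ controls $U_n^a(t)$ only for the \emph{small} local times $n<\lceil\delta t\rceil$ (the regime where $a\notin W_t$ and only the round-robin swap can save it), not the current index $U_a(t)$; and the factor $\left\lceil 2\log\gamma/\log\frac{1}{\delta}\right\rceil$ in \autoref{lem:bounds} does not account for ``how far past failures set back $W_t$-membership'' but for the number of anchor times $t_i=\lfloor\gamma^{r-1}/\delta^i\rfloor$ needed so that, using the monotonicity \eqref{eqn:incr}, a single application of \eqref{eqn:maximal} at each $t_i$ covers all $t$ in the block. Your proof would be complete once you either prove the block-level structural lemma or cite it as the paper does.
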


For the second part, if
$b \in \{ L+1, \ldots, K\}$, and $b \in \phi_{t+1}$, then there are three possibilities:
\begin{enumerate}
    \item $L_t \subseteq [L]$,
    and $|\Bar{Y}_a (t) - \mu (\theta_a)| \ge \epsilon$ for some $a \in L_t$,

    \item $L_t \subseteq [L]$,
    and $|\Bar{Y}_a (t) - \mu (\theta_a)| < \epsilon$ for all $a \in L_t$,
    and $b \in \phi_{t+1}$,
    
    \item $L_t \cap \{L+1, \ldots, K\} \neq \emptyset$.
    
\end{enumerate}
This means that,
\begin{align*}
\E_\btheta^\bphi [N_b (T)]
&\le M + \sum_{t=K}^{T-1} \Pr_\btheta^\bphi \left(L_t \subseteq [L], ~\text{and}~ |\Bar{Y}_a (t) - \mu (\theta_a)| \ge \epsilon ~\text{for some}~ a \in L_t\right) \\
&\quad + \sum_{t=K}^{T-1} \Pr_\btheta^\bphi \left(L_t \subseteq [L], ~\text{and}~ |\Bar{Y}_a (t) - \mu (\theta_a)| < \epsilon ~\text{for all}~ a \in L_t, ~\text{and}~ b \in \phi_{t+1}\right) \\
&\quad + \sum_{t=K}^{T-1} \Pr_\btheta^\bphi (L_t \cap \{L+1, \ldots, K\}
\neq \emptyset),
\end{align*}
and we handle each of those three terms separately.

We show that the first term is upper bounded by $O (1)$.
\begin{lemma}\label{lem:term-1}
\[
\sum_{t=K}^{T-1} \Pr_\btheta^\bphi \left(L_t \subseteq [L], ~\text{and}~ |\Bar{Y}_a (t) - \mu (\theta_a)| \ge \epsilon ~\text{for some}~ a \in L_t\right)
\le \frac{c L \eta^{\delta K}}{(1-\eta)(1-\eta^\delta)},
\]
where $c$ and $\eta$ are constant with respect to $T$.
\end{lemma}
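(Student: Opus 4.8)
The plan is to reduce the bandit-level probability to the single-arm deviation bound already furnished by \eqref{eqn:means}. The first observation is that the event forces the deviating arm to be one of the top $L$ arms: since the event requires $L_t \subseteq [L]$, any $a \in L_t$ satisfies $a \in [L]$. A union bound over $a = 1, \ldots, L$ therefore reduces the whole probability to a per-arm statement and produces the factor $L$ appearing in the claim. Concretely, I would bound
\[
\Pr_\btheta^\bphi \left(L_t \subseteq [L],~ |\Bar{Y}_a (t) - \mu (\theta_a)| \ge \epsilon ~\text{for some}~ a \in L_t\right)
\le \sum_{a=1}^L \Pr_\btheta^\bphi \left(a \in W_t,~ |\Bar{Y}_a (t) - \mu (\theta_a)| \ge \epsilon\right).
\]

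Next I would use the definition of $W_t$ to locate the number of pulls. Because $L_t \subseteq W_t$, every $a$ contributing to the sum has $N_a (t) \ge \lceil \delta t \rceil$; combined with the trivial bound $N_a (t) \le t$ (each arm enters $\phi_s$ at most once per round), the count $N_a (t)$ lies in the window $\{\lceil \delta t \rceil, \ldots, t\}$. Invoking the link $\Bar{Y}_a (t) = \Bar{Y}_{N_a (t)}^a$ between the global-time and local-time empirical means, the per-arm event is contained in $\{\max_{n = \lceil \delta t \rceil, \ldots, t} |\Bar{Y}_n^a - \mu (\theta_a)| \ge \epsilon\}$, which depends only on the first $t$ rewards of arm $a$.

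The third step is the passage from the adaptive law $\Pr_\btheta^\bphi$ to the single-chain law $\Pr_{\theta_a}$. Since the arms are independent and we are in the rested setting, the local reward process $\{Y_n^a\}_{n}$ of arm $a$ is distributed exactly as the Markov chain with parameter $\theta_a$, irrespective of how $\bphi$ interleaves plays across arms. Hence any event measurable with respect to this local process has the same probability under both laws, and \eqref{eqn:means} applies directly, giving $\Pr_{\theta_a}(\max_{n = \lceil \delta t \rceil, \ldots, t} |\Bar{Y}_n^a - \mu (\theta_a)| \ge \epsilon) \le c \eta^{\delta t}/(1-\eta)$. Summing over the $L$ arms yields the per-round bound $c L \eta^{\delta t}/(1-\eta)$.

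Finally I would sum over $t = K, \ldots, T-1$ and dominate by the full geometric tail $\sum_{t=K}^\infty (\eta^\delta)^t = \eta^{\delta K}/(1-\eta^\delta)$, valid since $\eta^\delta \in (0,1)$, which produces exactly the claimed bound $c L \eta^{\delta K}/\bigl((1-\eta)(1-\eta^\delta)\bigr)$. I expect the only genuinely delicate point to be the measure-change in the third step: carefully justifying that an event measurable with respect to the local reward trajectory of a single arm carries the same probability under $\Pr_\btheta^\bphi$ as under the stationary single-chain law $\Pr_{\theta_a}$. Everything else is a union bound, the range containment coming from the definition of $W_t$, and a geometric series.
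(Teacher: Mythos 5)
Your proposal is correct and follows essentially the same route as the paper, which simply cites \eqref{eqn:means} and sums the geometric series; you have filled in the implicit steps (the union bound over $[L]$, the containment $N_a(t) \in \{\lceil \delta t\rceil,\ldots,t\}$ via $L_t \subseteq W_t$, the reduction to the single-chain law, and the tail sum $\sum_{t \ge K}\eta^{\delta t} = \eta^{\delta K}/(1-\eta^{\delta})$) correctly. The measure-change point you flag is indeed the only subtlety, and your justification via the rested-bandit structure is the right one.
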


The second term is of the order of $O (\log T)$, and it is the term that causes the overall logarithmic regret.
\begin{lemma}\label{lem:term-2}
\begin{align*}
& \sum_{t=K}^{T-1} \Pr_\btheta^\bphi \left(L_t \subseteq [L], ~\text{and}~ |\Bar{Y}_a (t) - \mu (\theta_a)| < \epsilon ~\text{for all}~ a \in L_t, ~\text{and}~ b \in \phi_{t+1}\right) \\
& \qquad\le \frac{\log T + 3 \log \log T}{\KL{\mu (\theta_b)}{\mu (\theta_M) - \epsilon}} + 1 +
8 \sigma_{\mu (\theta_a),\mu (\theta_M) - \epsilon}^2
\left(\frac{\KLd{\mu (\theta_b)}{\mu (\theta_M) - \epsilon}}{\KL{\mu (\theta_b)}{\mu (\theta_M) - \epsilon}}\right)^2 \\
& \qquad\qquad + 2 \sqrt{2 \pi \sigma_{\mu (\theta_a), \mu (\theta_M) - \epsilon}^2} \sqrt{\frac{\KLd{\mu (\theta_b)}{\mu (\theta_M) - \epsilon}^2}
{\KL{\mu (\theta_b)}{\mu (\theta_M) - \epsilon}^3}}
\left(\sqrt{\log T} + \sqrt{3 \log \log T}\right),
\end{align*}
where $\sigma_{\mu (\theta_a), \mu (\theta_M) - \epsilon}^2$, and
$\KLd{\mu (\theta_b)}{\mu (\theta_M) - \epsilon} =
\frac{d \KL{\mu}{\mu (\theta_M) - \epsilon}}{d \mu} \mid_{\mu = \mu (\theta_b)}$, are constants with respect to $T$.
\end{lemma}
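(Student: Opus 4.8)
The plan is to show that, on the event appearing in the summand, the algorithm can play the suboptimal arm $b$ only through the round-robin swap, and that this forces the index $U_b(t)$ to exceed $\mu(\theta_M)-\epsilon$; then to convert the sum over global times $t$ into a sum over the local times of arm $b$, and finally to invoke the per-index bound~\eqref{eqn:sum-Chernoff}.

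First I would fix a sample path on which the event $E_t := \{L_t \subseteq [L],\ |\Bar{Y}_a(t) - \mu(\theta_a)| < \epsilon \text{ for all } a \in L_t,\ b \in \phi_{t+1}\}$ holds. Since $b \in \{L+1,\ldots,K\}$ while $L_t \subseteq [L]$, we have $b \notin L_t$, so the ``if'' branch of~\autoref{alg:KL-UCB} (which sets $\phi_{t+1} = L_t$) cannot place $b$ in $\phi_{t+1}$; hence $b$ can enter $\phi_{t+1}$ only through the ``else'' branch, which is taken precisely when $\min_{a \in L_t}\Bar{Y}_a(t) < U_b(t)$. On $E_t$, every $a \in L_t \subseteq [L]$ satisfies $\mu(\theta_a) \ge \mu(\theta_M)$ and $\Bar{Y}_a(t) > \mu(\theta_a) - \epsilon$, so $\min_{a\in L_t}\Bar{Y}_a(t) > \mu(\theta_M) - \epsilon$. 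Combining the two inequalities yields the deterministic implication
\[
E_t \ \Longrightarrow\ U_b(t) = U_{N_b(t)}^b(t) > \mu(\theta_M) - \epsilon .
\]

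Next I would carry out the global-to-local reduction. On a fixed sample path, each round $t$ with $E_t$ increments $N_b$, so the values $N_b(t)$ attained at such rounds are pairwise distinct; moreover, by the monotonicity~\eqref{eqn:incr} together with $t \le T-1$, the implication above gives $U_{N_b(t)}^b(T) \ge U_{N_b(t)}^b(t) > \mu(\theta_M)-\epsilon$. Therefore $\sum_{t=K}^{T-1}\one_{E_t} \le \#\{n \ge 1 : \mu(\theta_M) - \epsilon < U_n^b(T)\}$ pathwise, and taking expectations under $\Prtheta$, noting that the counted event depends only on arm $b$'s process whose law is $\Pr_{\theta_b}$, gives
\[
\sum_{t=K}^{T-1}\Prtheta(E_t) \le \sum_{n=1}^\infty \Pr_{\theta_b}\!\left(\mu(\theta_M)-\epsilon \le U_n^b(T)\right).
\]
Since $\epsilon < \mu(\theta_M) - \mu(\theta_{L+1})$ and $b \ge L+1$, the value $\mu(\theta_M) - \epsilon$ lies strictly above $\mu(\theta_b)$, so the hypothesis $\mu(\lambda) > \mu(\theta_a)$ of~\eqref{eqn:sum-Chernoff} is met with $a = b$ and $\mu(\lambda) = \mu(\theta_M)-\epsilon$. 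Applying~\eqref{eqn:sum-Chernoff}, substituting $g(T) = \log T + 3\log\log T$, and using the subadditivity $\sqrt{g(T)} \le \sqrt{\log T} + \sqrt{3\log\log T}$ yields exactly the claimed right-hand side.

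I expect the main obstacle to be the global-to-local reduction: one must argue carefully that each round with $E_t$ can be charged to a distinct local index $n = N_b(t)$, and that the monotonicity~\eqref{eqn:incr} transports the index inequality from the random time $t$ to the fixed horizon $T$, so that the pathwise counting bound and the subsequent exchange of summation and expectation are rigorous. The remaining ingredient, the per-index Chernoff sum~\eqref{eqn:sum-Chernoff}, is already established, so once the reduction is in place the conclusion follows by direct substitution.
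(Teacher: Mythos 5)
Your proposal is correct and follows essentially the same route as the paper: on the event in question the branching logic of the algorithm forces $U_b(t) > \min_{a \in L_t}\Bar{Y}_a(t) > \mu(\theta_M)-\epsilon$, the sum over global rounds is converted to a sum over the local times of arm $b$ (the paper does this via the stopping times $\tau_n^b = t+1$ and an exchange of summations, which is just a rewriting of your pathwise counting argument), monotonicity~\eqref{eqn:incr} lifts $U^b_{N_b(t)}(t)$ to $U^b_{N_b(t)}(T)$, and~\eqref{eqn:sum-Chernoff} is applied with target mean $\mu(\theta_M)-\epsilon > \mu(\theta_b)$. Your verification of the hypothesis of~\eqref{eqn:sum-Chernoff} via $\epsilon < \mu(\theta_M)-\mu(\theta_{L+1})$ and the final subadditivity step $\sqrt{g(T)} \le \sqrt{\log T}+\sqrt{3\log\log T}$ match the paper's conclusion exactly.
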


Finally, we show that the third term is upper bounded by $O (\log \log T)$.
\begin{lemma}\label{lem:term-3}
\[
\sum_{t=K}^{T-1} \Pr_\btheta^\bphi (L_t \cap \{L+1, \ldots, K\} \neq \emptyset) \le
\frac{4 e \gamma^2 L C \left\lceil\frac{2 \log \gamma}{\log \frac{1}{\delta}}\right\rceil}{\log \gamma} \log \log T
+ \gamma^{r_0} +
\frac{c \gamma^2 \eta^\delta K}{(1-\eta) (1-\eta^\delta)^3},
\]
where $\gamma, r_0, \eta, c$ and $C$ are constants with respect to $T$.
\end{lemma}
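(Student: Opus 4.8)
The plan is to exploit that the leader set $L_t$ has exactly $M$ elements and is drawn from the well-sampled arms $W_t$, so that whenever $L_t$ contains a suboptimal arm $b \in \{L+1,\ldots,K\}$, at least one of the $L$ arms with stationary mean at least $\mu(\theta_M)$ must be excluded from $L_t$. Concretely, I would first establish the set inclusion
\[
\{L_t \cap \{L+1,\ldots,K\} \neq \emptyset\}
\subseteq
\bigcup_{a=1}^{L}\{N_a(t) < \lceil \delta t\rceil\}
\cup
\bigcup_{a=1}^{L}\bigcup_{b=L+1}^{K}\{a,b\in W_t,\ \Bar{Y}_a(t) \le \Bar{Y}_b(t)\},
\]
using the defining property $\min_{a\in L_t}\Bar{Y}_a(t) \ge \sup_{b\in W_t - L_t}\Bar{Y}_b(t)$: an excluded optimal-or-tied arm is either not well-sampled, or it is well-sampled and lost the sample-mean comparison to the intruding suboptimal arm. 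Summing over $t$ then splits the bound into an under-sampling contribution and a misranking contribution, matching the two nontrivial pieces of the right-hand side. This mirrors the argument behind \autoref{lem:best-not-played}, with the $L$ optimal-or-tied arms here playing the role that the $N$ strictly optimal arms play there, which accounts for the appearance of $L$ in place of $N$ in the leading coefficient.

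For the misranking events I would invoke the concentration bound \eqref{eqn:means}. Since $\mu(\theta_a) \ge \mu(\theta_M)$ and $\mu(\theta_b) \le \mu(\theta_{L+1}) < \mu(\theta_M)$, the choice of $\epsilon$ guarantees a fixed positive gap between the two means, so $\{\Bar{Y}_a(t) \le \Bar{Y}_b(t)\}$ with both arms well-sampled forces one of the two empirical means, taken over a window $n \ge \lceil \delta t\rceil$, to deviate from its stationary mean by at least a constant. Applying \eqref{eqn:means} to each such arm and summing the resulting geometric series in $t$ produces the term $\frac{c\gamma^2\eta^\delta K}{(1-\eta)(1-\eta^\delta)^3}$, with $\eta$ and $c$ the constants from that concentration bound.

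The heart of the argument, and the source of both the $\log\log T$ growth and the additive constant $\gamma^{r_0}$, is controlling $\sum_{t}\sum_{a=1}^L \Pr_\btheta^\bphi(N_a(t) < \lceil \delta t\rceil)$. I would trace an under-sampling event back through the round-robin dynamics: if an optimal-or-tied arm $a$ has been played fewer than $\lceil\delta t\rceil$ times, then on many of the rounds $s<t$ at which $a$ was the round-robin candidate it failed to enter $\phi_{s+1}$, which by the algorithm forces $U_a(s) \le \min_{a'\in L_s}\Bar{Y}_{a'}(s)$, and hence $U_a(s)$ to fall below $\mu(\theta_a)$. The probability of such a drop is exactly what the maximal inequality \autoref{lem:maximal} controls, yielding the per-round bound $\frac{4eC_-^a}{s\log s}$ of \eqref{eqn:maximal}. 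Partitioning $[K,T-1]$ into geometric blocks of ratio $\gamma$, bounding the first $r_0$ blocks trivially (the $\gamma^{r_0}$ term) and applying the maximal inequality on the remaining blocks, the per-block contribution telescopes like $1/r$ over $r \le \log_\gamma T$ blocks, and $\sum_r 1/r = O(\log\log T)$ delivers the leading term.

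The main obstacle I anticipate is the bookkeeping that converts the static under-sampling event $\{N_a(t)<\lceil\delta t\rceil\}$ into a union of UCB-failure events at earlier times without double counting: one must argue that an arm lagging below the $\delta t$ threshold at a time $t\approx\gamma^{r+1}$ can be charged to a maximal-inequality failure occurring within only a bounded number of preceding blocks, which is what the factor $\lceil 2\log\gamma/\log\frac{1}{\delta}\rceil$ encodes. Getting this window count right, and ensuring that the leaders' minimum sample mean $\min_{a'\in L_s}\Bar{Y}_{a'}(s)$ indeed sits below $\mu(\theta_a)$ only on a maximal-inequality failure, is the delicate step; everything else reduces to the concentration estimates already established and the same geometric-block summation used for \autoref{lem:best-not-played}.
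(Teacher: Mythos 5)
There is a genuine gap, and it sits exactly where you flagged the ``delicate step.'' Your decomposition charges the event $\{L_t \cap \{L+1,\ldots,K\}\neq\emptyset\}$ in part to $\bigcup_{a=1}^{L}\{N_a(t)<\lceil\delta t\rceil\}$, and you then propose to bound $\Pr_\btheta^\bphi(N_a(t)<\lceil\delta t\rceil)$ for \emph{every} $a\in[L]$ by tracing each under-sampling back to a failure of the maximal inequality, ``with the $L$ optimal-or-tied arms playing the role that the $N$ strictly optimal arms play'' in \autoref{lem:best-not-played}. This extension from $[N]$ to $[L]$ does not go through. When $L>M$, the $L-N$ tied arms compete for only $M-N$ slots, so an individual tied arm $a\in\{N+1,\ldots,L\}$ can legitimately be played far less than $\lceil\delta t\rceil$ times without anything going wrong; the event $\{N_a(t)<\lceil\delta t\rceil\}$ need not be rare, and its probability summed over $t$ need not be $O(\log\log T)$. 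Your proposed mechanism for controlling it also breaks at the key inequality: for $a\in[N]$ one has $\min_{a'\in L_s}\Bar{Y}_{a'}(s)\le\mu(\theta_M)+\epsilon<\mu(\theta_a)$ on the concentration event (since $L_s$ must contain an arm outside $[N]$), so a rejection of $a$ forces $U_a(s)\le\mu(\theta_a)$ and the maximal inequality \eqref{eqn:maximal} applies; but for a tied arm with $\mu(\theta_a)=\mu(\theta_M)$ the leaders can all be tied arms whose sample means sit in $(\mu(\theta_M),\mu(\theta_M)+\epsilon)$ by ordinary fluctuation, so $U_a(s)\le\min_{a'\in L_s}\Bar{Y}_{a'}(s)$ does \emph{not} force $U_a(s)\le\mu(\theta_a)$, and there is no concentration failure to charge.

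The paper avoids this entirely: it never bounds $\Pr_\btheta^\bphi(N_a(t)<\lceil\delta t\rceil)$ for individual arms of $[L]$. Instead it bounds the event $\{L_t\not\subseteq[L]\}$ directly through \autoref{lem:always-best-II} (Lemma 5.3\,B of Anantharam--Varaiya--Walrand): on $A_r\cap C_r$ one has $L_t\subseteq[L]$ for all $t$ in the block $[\gamma^r,\gamma^{r+1}]$, where $C_r$ only requires the UCBs of arms in $[L]$ to stay above their \emph{own} means while those arms are under-sampled ($n<\lceil\delta t\rceil$), a statement the maximal inequality does control via \eqref{eqn:maximal}. The lemma then follows from the block decomposition and the bounds on $\Pr_\btheta(A_r^c)$ and $\Pr_\btheta(C_r^c)$ in \autoref{lem:bounds}, which is where the $L$, the $K$, the $\lceil 2\log\gamma/\log\frac1\delta\rceil$, the $\log\log T$, and the geometric-tail constant all come from. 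Your misranking term and your block/summation bookkeeping are fine and match the paper's; to repair the proof you should replace the per-arm under-sampling union by the single dynamical statement of \autoref{lem:always-best-II} conditioned on $A_r\cap C_r$.
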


This concludes the proof of~\autoref{thm:main}, modulo the four bounds of this subsection which are established in the next subsection.

\subsection{Proofs for the four bounds}

For the rest of the analysis we define the following events which describe
good behavior of the sample means and the upper confidence bounds.
For $\gamma, r \in \mathbb{Z}_{> 1}$ let,
\begin{align*}
A_r &= \bigcap_{a \in [K]} \bigcap_{\gamma^{r-1} \le t \le \gamma^{r+1}} \left\{\max_{n = \lceil \delta t \rceil, \ldots, t} |\Bar{Y}_n^a - \mu (\theta_a)| < \epsilon\right\}, \\
B_r &= \bigcap_{a \in [N]} \bigcap_{\gamma^{r-1} \le t \le \gamma^{r+1}}
\left\{\min_{n = 1, \ldots, \lceil \delta t \rceil - 1} U_n^a (t) > \mu (\theta_N)\right\}, \\
C_r &= \bigcap_{a \in [L]} \bigcap_{\gamma^{r-1} \le t \le \gamma^{r+1}}
\left\{\min_{n = 1, \ldots, \lceil \delta t \rceil - 1} U_n^a (t) > \mu (\theta_a)\right\}.
\end{align*}
Indeed, the following bounds, which rely on the concentration results of~\autoref{sec:concentration},
suggest that those events will happen with some good probability.
\begin{lemma}\label{lem:bounds}
\[
\Pr_\btheta (A_r^c) \le 
\frac{c K \eta^{\delta \gamma^{r-1}}}{(1-\eta)(1-\eta^\delta)},
\quad
\Pr_\btheta (B_r^c) \le 
\frac{4 e N C \left\lceil\frac{2 \log \gamma}{\log \frac{1}{\delta}}\right\rceil}{(r-1) \gamma^{r-1} \log \gamma},
\quad
\Pr_\btheta (C_r^c) \le 
\frac{4 e L C \left\lceil\frac{2 \log \gamma}{\log \frac{1}{\delta}}\right\rceil}{(r-1) \gamma^{r-1} \log \gamma},
\]
where $\eta \in (0, 1), ~c$ and $C$ are constants with respect to $r$.
\end{lemma}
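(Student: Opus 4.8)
The plan is to bound each of the three complements by a union bound over arms and over time, and then to feed each resulting term into one of the concentration estimates already proved: the sample-mean deviation bound \eqref{eqn:means}, the maximal inequality \eqref{eqn:maximal}, and the monotonicity \eqref{eqn:incr} of the indices $U_n^a(t)$ in $t$. The estimate for $A_r^c$ is routine, while the estimates for $B_r^c$ and $C_r^c$ require a peeling argument whose chunk width is dictated by $\delta$.

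First I would handle $\Pr_\btheta(A_r^c)$. Writing $A_r^c$ as the union over $a \in [K]$ and over integers $t \in [\gamma^{r-1}, \gamma^{r+1}]$ of the events $\{\max_{n}|\Bar{Y}_n^a - \mu(\theta_a)| \ge \epsilon\}$, I apply \eqref{eqn:means} to each term to get the bound $\frac{c\eta^{\delta t}}{1-\eta}$, extend the sum over $t$ to infinity, and sum the geometric series $\sum_{t \ge \gamma^{r-1}}(\eta^\delta)^t \le \frac{\eta^{\delta\gamma^{r-1}}}{1-\eta^\delta}$. Multiplying by $K$ gives $\frac{cK\eta^{\delta\gamma^{r-1}}}{(1-\eta)(1-\eta^\delta)}$ directly.

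For $\Pr_\btheta(C_r^c)$ a naive union bound over every $t$ fails, since $\sum_t \frac{1}{t\log t}$ over the whole window $[\gamma^{r-1},\gamma^{r+1}]$ is far too large. The trick I would use is to partition the window into sub-intervals of multiplicative width $1/\delta$, that is $[s, s/\delta]$; covering $[\gamma^{r-1},\gamma^{r+1}]$ needs $\lceil \frac{2\log\gamma}{\log(1/\delta)}\rceil$ of them, which is exactly the factor in the statement. The key claim is that within one sub-chunk, for every $t \in [s, s/\delta]$,
\[
\left\{\min_{n=1,\ldots,\lceil\delta t\rceil-1} U_n^a(t) \le \mu(\theta_a)\right\} \subseteq \left\{\min_{n=1,\ldots,s} U_n^a(s) \le \mu(\theta_a)\right\}.
\]
Indeed, any witnessing index satisfies $n \le \lceil\delta t\rceil-1 < \delta t \le s$ because $t \le s/\delta$, so $n$ is admissible at the left endpoint $s$, and the monotonicity \eqref{eqn:incr} gives $U_n^a(s) \le U_n^a(t) \le \mu(\theta_a)$. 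Thus the entire union over $t$ in the sub-chunk collapses to a single event at time $s$, whose probability is controlled by the maximal inequality \eqref{eqn:maximal} as $\frac{4eC_-^a}{s\log s} \le \frac{4eC_-^a}{\gamma^{r-1}(r-1)\log\gamma}$, using $s \ge \gamma^{r-1}$. Summing over the $\lceil\frac{2\log\gamma}{\log(1/\delta)}\rceil$ sub-chunks and over $a \in [L]$, and setting $C = \max_a C_-^a$, yields the stated bound for $C_r^c$.

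The estimate for $B_r^c$ is the same argument at the shifted comparison level $\mu(\theta_N)$: for $a \in [N]$ one has $\mu(\theta_N) \le \mu(\theta_a)$, and restricting the minimization range only raises the minimum, so the bad event is still contained in $\{\min_{n=1,\ldots,s} U_n^a(s) \le \mu(\theta_a)\}$ and \eqref{eqn:maximal} applies verbatim, now summed over $a \in [N]$. The hard part of the whole lemma is getting this peeling calibrated correctly: the sub-chunk ratio must be exactly $1/\delta$, so that the admissible index range $\{1,\ldots,\lceil\delta s'\rceil-1\}$ at any point of a sub-chunk already sits inside $\{1,\ldots,s\}$ at the left endpoint; this is precisely what lets the monotonicity \eqref{eqn:incr} transport the bad event back to time $s$ and what produces the constant $\lceil\frac{2\log\gamma}{\log(1/\delta)}\rceil$. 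Everything else is a union bound plus the two already-established tail estimates.
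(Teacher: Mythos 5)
Your proof is correct and follows essentially the same route as the paper: a union bound plus the geometric tail of \eqref{eqn:means} for $A_r^c$, and for $B_r^c$, $C_r^c$ a peeling of $[\gamma^{r-1},\gamma^{r+1}]$ into $\lceil 2\log\gamma/\log(1/\delta)\rceil$ sub-chunks of ratio $1/\delta$, using \eqref{eqn:incr} to transport each bad event to the chunk's left endpoint where \eqref{eqn:maximal} applies (the paper's $D_i$ events at $t_i=\lfloor\gamma^{r-1}/\delta^i\rfloor$ are exactly your collapsed endpoint events). No gaps.
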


\begin{proof}
The first bound follows directly from~\autoref{eqn:means} and a union bound.

For the second bound,
let $p = \left\lceil\frac{2 \log \gamma}{\log \frac{1}{\delta}}\right\rceil$, so that
$\left\lfloor \frac{\gamma^{r-1}}{\delta^p} \right\rfloor \ge \gamma^{r+1}$. For $i = 0, \ldots, p$ let $t_i = \left\lfloor \frac{\gamma^{r-1}}{\delta^i} \right\rfloor$, and define,
\[
D_i = \bigcap_{a \in [N]} \left\{
\min_{n=1,\ldots,t_i} U_n^a (t) > \mu (\theta_a)
\right\}.
\]
From~\autoref{eqn:maximal} we see that,
\[
\Pr_{\btheta} (D_i^c)
\le \frac{4 e N \max_{a \in [N]} C_-^a}{t_i \log t_i}
\le \frac{4 e N \max_{a \in [N]} C_-^a}{(r-1) \gamma^{r-1} \log \gamma},
\]
where $C_-^a$ is the constant from~\autoref{lem:maximal}.

Fix $a \in [N]$, and $\gamma^{r-1} \le t \le \gamma^{r+1}$.
There exists $i \in \{0, \ldots, p-1\}$ such that $t_i \le t \le t_{i+1}$,
and so $t_i > \delta t_i - 1 \ge \delta t - 1$, which gives that $t_i \ge \lceil \delta t 
\rceil - 1$.
On $D_i$, due to~\autoref{eqn:incr}, we have that,
\[
\min_{n = 1, \ldots, \lceil \delta t \rceil - 1} U_n^a (t) \ge
\min_{n = 1, \ldots, \lceil \delta t \rceil - 1} U_n^a (t_i) \ge
\min_{n=1,\ldots, t_i} U_n^a (t_i) >
\mu (\theta_a) \ge \mu (\theta_N).
\]
Therefore,
\[
\Pr_\btheta (B_r^c)
\le \sum_{i=0}^{p-1} \Pr_\btheta (D_i^c)
\le \frac{4 e N p \max_{a \in [N]} C_-^a}{(r-1) \gamma^{r-1} \log \gamma}.
\]
The third bound is established along the same lines.
\end{proof}

In order to establish~\autoref{lem:best-not-played} we need the following lemma which states that,
on $A_r \cap B_r$, an event of sufficiently large probability according to~\autoref{lem:bounds},
all the best $N$ arms are played.

\begin{lemma}[Lemma 5.3 in~\cite{Ananth-Varaiya-Walrand-I-87}]\label{lem:always-best-I}
Fix $\gamma \ge \lceil (1 - K \delta)\inv \rceil + 2$, and
let $r_0 = \lceil \log_\gamma \frac{2 K}{1 - K \delta - \gamma\inv} \rceil + 2$.
For any $r \ge r_0$,
on $A_r \cap B_r$ we have that 
$[N] \subset \phi_{t+1}$ for all
$\gamma^r \le t \le \gamma^{r+1}$.
\end{lemma}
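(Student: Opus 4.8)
The plan is to fix a best arm $a \in [N]$ and split the window $\gamma^r \le t \le \gamma^{r+1}$ into two regimes: a short \emph{burn-in} phase in which $a$ is still under-sampled ($N_a(t) < \lceil \delta t\rceil$, so $a \notin W_t$), and a \emph{steady-state} phase in which $a$ is well-sampled ($a \in W_t$). I would first show that, on $A_r$, once $a$ enters $W_t$ it never leaves and is played at every remaining round of the window; I would then bound the burn-in length and choose $r_0$ so that every best arm has entered $W_t$ by time $\gamma^r$, at which point the steady-state argument takes over. By \autoref{lem:bounds} the events $A_r, B_r$ hold with high probability, so this gives the claim on $A_r \cap B_r$.

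For the steady state, suppose $a \in W_t$ for some $t$ in the window. On $A_r$ every $c \in W_t$ satisfies $|\bar{Y}_c(t) - \mu(\theta_c)| < \epsilon$. The $N$ best arms then have empirical means above $\mu(\theta_N) - \epsilon$, while every arm with true mean at most $\mu(\theta_{L+1})$ has empirical mean below $\mu(\theta_M)$ (using $\epsilon < \mu(\theta_M) - \mu(\theta_{L+1})$); the band $\epsilon < \mu(\theta_N) - \mu(\theta_M)$ is what separates the best arms from the remaining arms, forcing the $N$ best arms to occupy the top empirical means in $W_t$ and hence, since $N < M$, to belong to $L_t$. Because $|L_t| = M > N$, the set $L_t$ contains a non-best arm, so $\argmin_{c \in L_t}\bar{Y}_c(t)$ is never a best arm; thus $a$ is played whether $\phi_{t+1} = L_t$ or the swap branch is taken. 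Consequently $N_a(t+1) = N_a(t) + 1 \ge \lceil \delta t\rceil + 1 \ge \lceil \delta(t+1)\rceil$ (as $\delta < 1$), so $a \in W_{t+1}$, and induction on $t$ shows $a$ stays well-sampled and is played through the end of the window.

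For the burn-in, while $a$ is under-sampled it can be played only when it is the round-robin candidate $b \equiv t+1 \pmod K$, once every $K$ rounds; I would check that on such a round $a$ \emph{is} played. Since $a \notin W_t \supseteq L_t$ and $N_a(t) \le \lceil\delta t\rceil - 1$, event $B_r$ gives $U_b(t) = U_{N_a(t)}^a(t) \ge \min_{n \le \lceil\delta t\rceil - 1}U_n^a(t) > \mu(\theta_N)$, while on $A_r$ the set $L_t$ contains an arm $c$ with $\mu(\theta_c) \le \mu(\theta_M)$, so $\min_{c \in L_t}\bar{Y}_c(t) < \mu(\theta_M) + \epsilon < \mu(\theta_N) < U_b(t)$; the swap branch of \autoref{alg:KL-UCB} therefore fires and $a \in \phi_{t+1}$. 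Hence an arm under-sampled throughout $[K,t]$ collects at least $\lfloor (t-K)/K\rfloor$ plays, giving $N_a(t) \gtrsim t/K$, which contradicts $N_a(t) < \delta t + 1$ once $t$ exceeds a threshold of order $K/(1-K\delta)$. The condition $\gamma \ge \lceil(1-K\delta)\inv\rceil + 2$ keeps $1 - K\delta - \gamma\inv > 0$, and choosing $r_0 = \lceil \log_\gamma \tfrac{2K}{1 - K\delta - \gamma\inv}\rceil + 2$ makes $\gamma^{r-1}$ large enough, for every $r \ge r_0$, that each best arm must have entered $W_t$ at some point no later than $\gamma^r$ while still inside the $A_r \cap B_r$ window.

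The main obstacle is the burn-in counting: one must track the ceilings $\lceil \delta t\rceil$ as $t$ ranges over $[\gamma^{r-1}, \gamma^{r+1}]$, relate $N_a(t)$ at the instant of transition from under- to well-sampled to the window endpoints (the origin of the $\gamma\inv$ correction and the ``$+2$'' in $r_0$), and verify that this first entry into $W_t$ happens while $A_r$ and $B_r$ are still in force, so the steady-state induction starts without a gap. The domination step in the steady-state phase is the other delicate point, since placing all $N$ best arms strictly above the remaining arms in $L_t$ relies entirely on the two-sided separation furnished by the admissible range of $\epsilon$.
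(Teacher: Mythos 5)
First, a framing point: the paper does not actually prove this lemma --- it is imported wholesale as Lemma 5.3 of \cite{Ananth-Varaiya-Walrand-I-87} --- so your proposal has to be judged on its own merits rather than against an in-paper argument. Your two-phase architecture (a burn-in in which an under-sampled best arm is forced in through the round-robin/swap branch via $B_r$, followed by a steady-state induction on membership in $W_t$ via $A_r$) is the right skeleton, and the burn-in mechanism itself is sound: for $b=a\notin W_t$ you correctly get $U_b(t)>\mu(\theta_N)$ from $B_r$, and $\min_{c\in L_t}\bar{Y}_c(t)<\mu(\theta_M)+\epsilon<\mu(\theta_N)$ from $A_r$ together with the fact that $|L_t|=M>N$ forces a non-best arm into $L_t$.

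The genuine gap is in the steady-state separation. You bound the empirical means of arms with true mean at most $\mu(\theta_{L+1})$, but you never handle the arms $N+1,\dots,L$, whose true mean equals $\mu(\theta_M)$ and whose empirical means $A_r$ only constrains to lie below $\mu(\theta_M)+\epsilon$. A best arm is only guaranteed $\bar{Y}_a(t)>\mu(\theta_N)-\epsilon$, and $\mu(\theta_N)-\epsilon>\mu(\theta_M)+\epsilon$ requires $2\epsilon<\mu(\theta_N)-\mu(\theta_M)$, which is strictly stronger than the admissible range $\epsilon<\mu(\theta_N)-\mu(\theta_M)$ you invoke. When $\epsilon$ lies in the upper half of that range and $L>M$, arms from $\{N+1,\dots,L\}$ can overtake a best arm in empirical mean, push it out of the top-$M$ set $L_t$, or make it the element of $\argmin_{c\in L_t}\bar{Y}_c(t)$ that the swap branch evicts --- and $B_r$ offers no protection once the arm is well-sampled. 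So the assertions that ``the $N$ best arms occupy the top empirical means in $W_t$'' and that the minimizer ``is never a best arm'' do not follow as written; the argument must be run with $2\epsilon$ strictly below the gap (equivalently, instantiate the events at $\epsilon/2$). A secondary, smaller issue: your burn-in tally starts counting round-robin visits at time $K$, but the forced plays are only guaranteed on $A_r\cap B_r$, i.e., for $t\ge\gamma^{r-1}$, so the count must read $N_a(t)\ge\lfloor(t-\gamma^{r-1})/K\rfloor$; this offset is exactly where the $\gamma\inv$ correction in $r_0$ comes from. You flag this in your list of obstacles but do not carry it out, so the choice of $r_0$ is not actually verified.
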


\begin{proof}[Proof of~\autoref{lem:best-not-played}.] \hfill\break
\begin{align*}
\E_\btheta^\bphi [T - N_a (T)]
&\le \gamma^{r_0} +
\sum_{r=r_0}^{\lceil \log_\gamma (T-1) \rceil - 1} \sum_{\gamma^r \le t \le \gamma^{r+1}} \Pr_\btheta^\bphi (a \not \in \phi_{t+1}) \\
&\le \gamma^{r_0} +
\sum_{r=r_0}^{\lceil \log_\gamma (T-1) \rceil - 1} \sum_{\gamma^r \le t \le \gamma^{r+1}}
(\Pr_\btheta (A_r^c) + \Pr_\btheta (B_r^c)) \\
&\le \gamma^{r_0} + \sum_{r=r_0}^{\lceil \log_\gamma (T-1) \rceil - 1} \left(
\frac{c K \gamma^{r+1} \eta^{\delta \gamma^{r-1}}}{(1-\eta)(1-\eta^\delta)} +
\frac{4 e \gamma^2 N C \left\lceil\frac{2 \log \gamma}{\log \frac{1}{\delta}}\right\rceil}{(r-1) \log \gamma}
\right),
\end{align*}
where the second inequality follows from~\autoref{lem:always-best-I}, and the third from~\autoref{lem:bounds}.
Now we use a simple logarithmic upper bound on the harmonic number to obtain,
\[
\sum_{r=r_0}^{\lceil \log_\gamma (T-1) \rceil - 1} \frac{1}{r-1} \le 
\sum_{r=3}^{\lceil \log_\gamma (T-1) \rceil - 1} \frac{1}{r-1} \le
\log \log_\gamma T \le 
\log \log T.
\]
Finally, we can upper bound the other summand by a constant, with respect to $T$, in the following way,
\[
\sum_{r=r_0}^{\lceil \log_\gamma (T-1) \rceil - 1}
\gamma^{r-1} \eta^{\delta \gamma^{r-1}} \le 
\sum_{k=1}^\infty k \eta^{\delta k} =
\frac{\eta^\delta}{(1-\eta^\delta)^2}.
\]
\end{proof}

\begin{proof}[Proof of~\autoref{lem:term-1}.] \hfill\break
Using~\autoref{eqn:means} it is straightforward to see that
\[
\Pr_\btheta^\bphi \left(L_t \subseteq [L], ~\text{and}~ |\Bar{Y}_a (t) - \mu (\theta_a)| \ge \epsilon ~\text{for some}~ a \in L_t\right)
\le \frac{c L \eta^{\delta t}}{1-\eta},
\]
and the conclusion follows by summing the geometric series.
\end{proof}

\begin{proof}[Proof of~\autoref{lem:term-2}.] \hfill\break
Assume that $L_t \subseteq [L]$,
and $|\Bar{Y}_a (t) - \mu (\theta_a)| < \epsilon$ for all $a \in L_t$,
and $b \in \phi_{t+1}$. Then it must be the case that 
$b \equiv t + 1 \pmod{K}, ~ b \not\in L_t$, and
$U_b (t) >
\min_{a \in L_t} \Bar{Y}_a (t) >
\min_{a \in L_t} \mu (\theta_a) - \epsilon \ge
\mu (\theta_M) - \epsilon$.
This shows that,
\begin{align*}
& \Pr_\btheta^\bphi \left(L_t \subseteq [L], ~\text{and}~ |\Bar{Y}_a (t) - \mu (\theta_a)| < \epsilon ~\text{for all}~ a \in L_t, ~\text{and}~ b \in \phi_{t+1}\right) \\
& \qquad \le 
\Pr_\btheta^\bphi (b \in \phi_{t+1}, ~\text{and}~ U_b (t) > \mu (\theta_M) - \epsilon).
\end{align*}
Furthermore,
\begin{align*}
& \sum_{t=K}^{T-1} \Pr_\btheta^\bphi (b \in \phi_{t+1}, ~\text{and}~ U_b (t) > \mu (\theta_M) - \epsilon) \\
&\qquad =
\sum_{t=K}^{T-1} \sum_{n=M+1}^{M + T - K}
\Pr_\btheta^\bphi (\tau_n^b = t+1, ~\text{and}~ U_n^b (t) > \mu (\theta_M) - \epsilon) \\
&\qquad \le \sum_{t=K}^{T-1} \sum_{n=M+1}^{M + T - K}
\Pr_\btheta^\bphi (\tau_n^b = t+1, ~\text{and}~ U_n^b (T) > \mu (\theta_M) - \epsilon) \\
&\qquad = \sum_{n=M+1}^{M + T - K} \sum_{t=K}^{T-1}
\Pr_\btheta^\bphi (\tau_n^b = t+1, ~\text{and}~ U_n^b (T) > \mu (\theta_M) - \epsilon) \\
&\qquad \le \sum_{n=M+1}^{M + T - K} 
\Pr_{\theta_b} (U_n^b (T) > \mu (\theta_M) - \epsilon),
\end{align*}
where in the first inequality we used~\autoref{eqn:incr}.
Now the conclusion follows from~\autoref{eqn:sum-Chernoff}.
\end{proof}

In order to establish~\autoref{lem:term-3} we need the following lemma which states that,
on $A_r \cap C_r$, an event of sufficiently large probability according to~\autoref{lem:bounds},
only arms from $\{1, \ldots, L\}$ have been played at least $\lceil \delta t \rceil$ times and have a large sample mean.

\begin{lemma}[Lemma 5.3 B in~\cite{Ananth-Varaiya-Walrand-I-87}]\label{lem:always-best-II}
Fix $\gamma \ge \lceil (1 - K \delta)\inv \rceil + 2$, and
let $r_0 = \lceil \log_\gamma \frac{2 K}{1 - K \delta - \gamma\inv} \rceil + 2$.
For any $r \ge r_0$, on $A_r \cap C_r$ we have that 
$L_t \subseteq [L]$ for all
$\gamma^r \le t \le \gamma^{r+1}$.
\end{lemma}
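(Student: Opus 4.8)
The plan is to mirror the proof of~\autoref{lem:always-best-I}: I would reduce the set inclusion $L_t \subseteq [L]$ to a purely combinatorial statement about how many of the good arms $[L]$ lie in $W_t$, and then establish that statement using the event $C_r$ together with the round-robin mechanism. Throughout I fix $\gamma$ and $r_0$ as prescribed, take $r \ge r_0$ and $\gamma^r \le t \le \gamma^{r+1}$, and argue deterministically on $A_r \cap C_r$ (an event of large probability by~\autoref{lem:bounds}).

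First I would record the sample-mean separation furnished by $A_r$. Since $A_r$ controls $\Bar{Y}_n^a$ for every $\lceil \delta t \rceil \le n \le t$, any arm in $W_t$ has $|\Bar{Y}_a (t) - \mu (\theta_a)| < \epsilon$; hence a good arm $a \in [L] \cap W_t$ obeys $\Bar{Y}_a (t) > \mu (\theta_a) - \epsilon \ge \mu (\theta_M) - \epsilon$, while a bad arm $b \in \{L+1, \ldots, K\} \cap W_t$ obeys $\Bar{Y}_b (t) < \mu (\theta_b) + \epsilon \le \mu (\theta_{L+1}) + \epsilon$. For the prescribed range of $\epsilon$ these two bands are separated, so every good arm in $W_t$ outscores every bad arm in $W_t$. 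Consequently, if at least $M$ good arms lie in $W_t$, the $M$ largest sample means among $W_t$ are all attained by arms of $[L]$, forcing any valid leader set to satisfy $L_t \subseteq [L]$. It therefore suffices to prove the counting claim $|[L] \cap W_t| \ge M$ on $A_r \cap C_r$ for $r \ge r_0$.

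To establish the counting claim I would exploit $C_r$ exactly as $B_r$ is used in~\autoref{lem:always-best-I}. On $C_r$, an under-played good arm $a \in [L]$ (one with $N_a (t) < \lceil \delta t \rceil$, i.e. $a \notin W_t$) satisfies $U_a (t) = U_{N_a (t)}^a (t) \ge \min_{1 \le n \le \lceil \delta t \rceil - 1} U_n^a (t) > \mu (\theta_a) \ge \mu (\theta_M)$. When such an arm's turn comes up in the schedule (so $b = a$, and $a \notin L_t$ since $L_t \subseteq W_t$), the algorithm either swaps it into $\phi_{t+1}$, so that it gains a play, or declines because $\min_{c \in L_t} \Bar{Y}_c (t) \ge U_a (t) > \mu (\theta_M)$; but in the latter case all $M$ leaders have sample mean strictly above $\mu (\theta_M) > \mu (\theta_{L+1}) + \epsilon$, hence by $A_r$ are all good and $L_t \subseteq [L]$ holds outright. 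Since each arm's turn recurs within every $K$ rounds, an under-played good arm accumulates plays at rate at least $1/K$. Tracking this accumulation from the start of the enlarged window $[\gamma^{r-1}, \gamma^{r+1}]$, the choices $\gamma \ge \lceil (1 - K\delta)\inv \rceil + 2$ and $r \ge r_0$ make the gained count, of order $(t - \gamma^{r-1})/K$, exceed the moving threshold $\lceil \delta t \rceil$ for every $t \ge \gamma^r$ — this is exactly the inequality $\gamma (1 - K\delta) \ge 1$ encoded in the lower bound on $\gamma$. A clean way to organise the bookkeeping is a joint induction over $t$ in the window, simultaneously carrying "$L_t \subseteq [L]$" and "at least $M$ good arms have crossed the threshold," with the base case at $t = \gamma^r$ secured by the warm-up afforded by $r \ge r_0$ and $|W_t| \ge M$ from~\autoref{prop:well-defined}.

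The main obstacle is this counting step: one must couple the round-robin dynamics with the moving threshold $\lceil \delta t \rceil$ defining $W_t$ and with the evolving leader set, disposing carefully of the dichotomy (an under-played good arm is either selected or the leaders are already entirely good) so that the rate-$1/K$ accumulation provably outpaces $\delta t$ uniformly over the window. The constants $\gamma$ and $r_0$ are tuned precisely for this, following Lemma 5.3 B of~\cite{Ananth-Varaiya-Walrand-I-87}; once the counting claim is in hand, the sample-mean separation of the second paragraph makes the passage to $L_t \subseteq [L]$ routine.
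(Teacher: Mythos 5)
First, note that the paper does not prove this lemma at all: it is imported verbatim as Lemma 5.3 B of the cited Anantharam--Varaiya--Walrand paper, so there is no in-paper argument to compare yours against. Your first reduction is sound and is indeed the standard route: on $A_r$ every arm of $W_t$ has its sample mean within $\epsilon$ of its stationary mean, so, granting the separation of the two bands, $L_t \subseteq [L]$ becomes equivalent to the counting claim $|[L] \cap W_t| \ge M$. One quibble there: the separation you invoke requires $\mu(\theta_M) - \epsilon \ge \mu(\theta_{L+1}) + \epsilon$, i.e. $\epsilon \le (\mu(\theta_M) - \mu(\theta_{L+1}))/2$, which is stronger than the range of $\epsilon$ prescribed in \autoref{thm:main}; this factor of two is harmless for the asymptotics but should be flagged rather than asserted to follow from "the prescribed range."

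The genuine gap is in the counting step, which you correctly identify as the crux but do not close. The dichotomy you extract from the algorithm --- at a turn of an under-played good arm $a$, either $a$ gains a play, or $\min_{c \in L_s} \bar{Y}_c (s) \ge U_a (s) > \mu(\theta_M)$ forces $L_s \subseteq [L]$ --- is correct, but it does not yield the claimed rate-$1/K$ accumulation: in the second branch $a$ gains nothing, and the conclusion $L_s \subseteq [L]$ holds only at that particular turn time $s$, not at the later time $t$ at which the counting claim is needed. Moreover, a good arm can subsequently drop out of $W_t$, since the threshold $\lceil \delta t \rceil$ grows while a non-leader's play count stays fixed (and with $L > M$ not all good arms can be leaders simultaneously), so a conclusion established at time $s$ does not propagate forward for free. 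The "joint induction" that is supposed to reconcile the moving threshold, the drop-outs, and the two branches of the dichotomy is named but never formulated, and it is precisely where the constants $\gamma$ and $r_0$ must be spent. As it stands, your proposal is a correct reduction plus an accurate description of what Lemma 5.3 B must accomplish, rather than a proof of it; to make it self-contained you would need the explicit window-by-window bookkeeping, e.g. classifying, for each $a \in [L]$ under-played at time $t$, its turns in $(\gamma^{r-1}, t]$ according to the branches above and deriving a contradiction with $N_a (t) < \lceil \delta t \rceil$ from $t \ge \gamma^{r_0}$.
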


\begin{proof}[Proof of~\autoref{lem:term-3}.] \hfill\break
From~\autoref{lem:always-best-II} we see that,
\[
\sum_{t=K}^{T-1} \Pr_\btheta^\bphi (L_t \cap \{L+1, \ldots, K\} \neq \emptyset)
\le \gamma^{r_0} + \sum_{r=r_0}^{\lceil \log_\gamma (T-1) \rceil - 1} \sum_{\gamma^r \le t \le \gamma^{r+1}} (\Pr_\btheta (A_r^c) + \Pr_\btheta (C_r^c)).
\]
The rest of the calculations are similar with the proof of~\autoref{lem:best-not-played}.
\end{proof}

\begin{proof}[Proof of~\autoref{cor:optimal}.] \hfill\break
In the finite-time regret bound of~\autoref{thm:main} we divide by $\log T$,
let $T$ go to $\infty$, and then let $\epsilon$ go to $0$ in order to get,
\[
\limsup_{T \to \infty} \frac{R_\btheta^\bphi (T)}{\log T} \le
\sum_{b=L+1}^K
\frac{\mu (\theta_M) -\mu (\theta_b)}
{\KL{\mu (\theta_b)}{\mu (\theta_M)}}.
\]
The conclusion now follows by using the asymptotic lower bound from~\autoref{eqn:lower-bound}.
\end{proof}

\begin{proof}[Proof of~\autoref{thm:main-IID}.] \hfill\break
The proof of~\autoref{thm:main-IID} follows along the lines the proof of~\autoref{thm:main}, by replacing instances of entries of the right Perron-Frobenius eigenvector $v_\theta (x)$ with one, and is thus omitted.
\end{proof}
\section{General Asymptotic Lower Bound}\label{a:lower-bound}

Recall from~\autoref{sec:general-family} the general one-parameter family of Markov chains $\{\Pr_\theta : \theta \in \Theta\}$, where each Markovian probability law $\Pr_\theta$ is characterized by an initial distribution $q_\theta$ and a transition probability matrix $P_\theta$. 
For this family we assume that,
\begin{gather}
P_\theta ~\text{is irreducible for all}~ \theta \in \Theta. \label{eqn:irred} \\
P_\theta (x, y) > 0 \;\Rightarrow\; P_\lambda (x, y) > 0, ~\text{for all}~ \theta, \lambda \in \Theta, ~ x, y \in S. \label{eqn:supp} \\
q_\theta (x) > 0 \;\Rightarrow\; q_\lambda (x), ~\text{for all}~ \theta, \lambda \in \Theta, ~ x \in S. \label{eqn:init}
\end{gather}
In general it is not necessary that the parameter space $\Theta$ is the whole real line, but it is assumed to satisfy the following denseness condition. For all $\lambda \in \Theta$ and all $\delta > 0$,
there exists $\lambda' \in \Theta$ such that,
\begin{equation}\label{eqn:dense}
\mu (\lambda) < \mu (\lambda') < \mu (\lambda) + \delta.
\end{equation}
Furthermore, the Kullback-Leibler divergence rate is assumed to satisfy the following continuity property.
For all $\epsilon > 0$, and for all $\theta, \lambda \in \Theta$
such that $\mu (\lambda) > \mu (\theta)$, there exists $\delta > 0$
such that,
\begin{equation}\label{eqn:cts}
\mu (\lambda) < \mu (\lambda') < \mu (\lambda) + \delta 
\;\Rightarrow\;
|\KL{\theta}{\lambda} - \KL{\theta}{\lambda'}| < \epsilon.
\end{equation}
An adaptive allocation rule $\bphi$ is called \emph{uniformly good} if,
\[
R_\btheta^\bphi (T) = o (T^\alpha),
~\text{for all}~ \btheta \in \Theta^K,
~\text{and all}~  \alpha > 0.
\]
Under those conditions~\cite{Ananth-Varaiya-Walrand-II-87} establish the following asymptotic lower bound.
\begin{theorem}[Theorem 3.1 from~\cite{Ananth-Varaiya-Walrand-II-87}]\label{thm:lower-bound}
Assume that the one-parameter family of Markov chains on the finite state space $S$, together with the reward function $f : S \to \Real{}$,
satisfy conditions~\eqref{eqn:irred},~\eqref{eqn:supp},~\eqref{eqn:init},~\eqref{eqn:dense}, and~\eqref{eqn:cts}.
Let $\bphi$ be a uniformly good allocation rule.
Fix a parameter configuration $\btheta \in \Theta^K$, and without loss of generality assume that,
\[
\mu (\theta_1) \ge \ldots \ge \mu (\theta_N) > 
\mu (\theta_{N+1}) \ldots = \mu (\theta_M) = \ldots = \mu (\theta_L) >
\mu (\theta_{L+1}) \ge \ldots \ge \mu (\theta_K).
\]
Then for every $b = L+1, \ldots, K$,
\[
\frac{1}{\KL{\theta_b}{\theta_M}}
\le 
\liminf_{T \to \infty}
\frac{\E_\btheta^\bphi \left[N_b (T)\right]}{\log T}.
\]
Consequently,
\[
\sum_{b=L+1}^K
\frac{\mu (\theta_M) -\mu (\theta_b)}
{\KL{\theta_b}{\theta_M}} \le
\liminf_{T \to \infty} \frac{R_\btheta^\bphi (T)}{\log T}.
\]
\end{theorem}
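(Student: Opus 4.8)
The plan is to run the classical change-of-measure argument of \cite{Lai-Robbins-85}, adapted to additive functionals of Markov chains and to the multiple-plays setting. Fix a suboptimal arm $b \in \{L+1, \ldots, K\}$ and a small $\delta > 0$. First I would use the denseness condition~\eqref{eqn:dense} together with the continuity condition~\eqref{eqn:cts} to select a parameter $\lambda \in \Theta$ with $\mu(\lambda) > \mu(\theta_M)$ and $\KL{\theta_b}{\lambda} \le (1+\delta)\KL{\theta_b}{\theta_M}$. Let $\btheta'$ be the configuration obtained from $\btheta$ by replacing only the $b$-th coordinate $\theta_b$ with $\lambda$. Since $\mu(\lambda) > \mu(\theta_M)$ and only $N \le M-1$ arms strictly exceed $\mu(\theta_M)$ under $\btheta$, arm $b$ becomes one of the $M$ best arms under $\btheta'$; hence every round in which arm $b$ is not played charges the regret $R_{\btheta'}^\bphi(T)$, and uniform goodness yields $\E_{\btheta'}^\bphi[T - N_b(T)] = o(T^\alpha)$ for every $\alpha > 0$.

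Next I would set up the likelihood ratio. Because $\Pr_\btheta^\bphi$ and $\Pr_{\btheta'}^\bphi$ differ only through arm $b$, and the allocation probabilities coincide under both laws given the observed history, the Radon--Nikodym derivative on $\calF_T$ factorizes through the samples of arm $b$ alone. Writing $\calL_n = \sum_{k=0}^{n-1} \log \frac{P_{\theta_b}(X_k^b, X_{k+1}^b)}{P_\lambda(X_k^b, X_{k+1}^b)}$ for the cumulative log-likelihood ratio of the first $n$ transitions of arm $b$ (finite by the support condition~\eqref{eqn:supp}, with the initial-distribution term $O(1)$ by~\eqref{eqn:init}), the change-of-measure identity reads $\Pr_{\btheta'}^\bphi(A) = \E_\btheta^\bphi[\one_A\, e^{-\calL_{N_b(T)}}]$ for every $A \in \calF_T$. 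The expectation of the per-step increment under the stationary chain with parameter $\theta_b$ is exactly $\KL{\theta_b}{\lambda}$ by the definition~\eqref{eqn:KLr}, so the ergodic theorem for irreducible Markov chains gives $\calL_n/n \to \KL{\theta_b}{\lambda}$ almost surely under $\Pr_{\theta_b}$, and consequently the partial maxima satisfy $\max_{n \le m}\calL_n/m \to \KL{\theta_b}{\lambda}$ as well.

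The core estimate combines these two ingredients. Put $C_T = \frac{(1-\delta)\log T}{\KL{\theta_b}{\lambda}}$ and $G_T = \{\max_{n \le C_T}\calL_n \le (1-\delta/2)\log T\}$; the maximal strong law gives $\Pr_\btheta^\bphi(G_T^c) \to 0$, since $C_T\KL{\theta_b}{\lambda} = (1-\delta)\log T$. On $\{N_b(T) < C_T\} \cap G_T$ one has $\calL_{N_b(T)} \le (1-\delta/2)\log T$, hence $e^{-\calL_{N_b(T)}} \ge T^{-(1-\delta/2)}$, and the change-of-measure identity yields
\[
\Pr_\btheta^\bphi(N_b(T) < C_T,\, G_T) \le T^{1-\delta/2}\,\Pr_{\btheta'}^\bphi(N_b(T) < C_T).
\]
A Markov-inequality bound on $T - N_b(T)$ under $\btheta'$ gives $\Pr_{\btheta'}^\bphi(N_b(T) < C_T) = o(T^{\alpha-1})$; choosing $\alpha < \delta/2$ forces the right-hand side to $0$. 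Therefore $\Pr_\btheta^\bphi(N_b(T) \ge C_T) \to 1$, so $\liminf_T \E_\btheta^\bphi[N_b(T)]/\log T \ge (1-\delta)/\KL{\theta_b}{\lambda}$; letting $\delta \downarrow 0$ and using $\KL{\theta_b}{\lambda} \le (1+\delta)\KL{\theta_b}{\theta_M}$ recovers the per-arm bound $1/\KL{\theta_b}{\theta_M}$. Summing $(\mu(\theta_M)-\mu(\theta_b))\,\E_\btheta^\bphi[N_b(T)]$ over $b$ and discarding the nonnegative contribution of the best $N$ arms together with the $O(1)$ Markovian correction from~\autoref{lem:regret-proxy} gives the regret lower bound.

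I expect the main obstacle to be the two places where the Markovian structure departs from the i.i.d.\ template: establishing that the likelihood ratio genuinely factorizes through arm $b$ under adaptive, multiple-play sampling (so that the allocation randomness and the untouched arms cancel in the Radon--Nikodym derivative), and upgrading the ergodic-theorem strong law for the additive functional $\calL_n$ to the maximal control $\Pr_\btheta^\bphi(G_T^c)\to 0$. Both steps are routine in the i.i.d.\ case but here rely essentially on the support and irreducibility hypotheses~\eqref{eqn:irred}--\eqref{eqn:init} to guarantee absolute continuity and ergodicity of the log-likelihood increments.
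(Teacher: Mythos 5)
Your argument is essentially correct, but it follows a genuinely different route from the paper. You run the classical Lai--Robbins / Anantharam--Varaiya--Walrand change-of-measure: an explicit log-likelihood ratio $\calL_n$ accumulated along arm $b$'s trajectory, the in-probability statement $\Pr_\btheta^\bphi(N_b(T) \ge C_T) \to 1$ obtained by intersecting with a good event $G_T$ that controls the running maximum of $\calL_n$ via the strong law for the Markov additive functional, and a Markov-inequality bound on $T-N_b(T)$ under the perturbed configuration. The paper deliberately avoids this (it cites Lai--Robbins and AVW for the prototypical argument and states it gives ``an alternative simplified proof''): it instead upper-bounds $\KL{\Prtheta\mid_{\calF_T}}{\Prlambda\mid_{\calF_T}}$ by a Wald-type identity for Markov chains (\autoref{lem:KL-bound}, which absorbs the Markovian overshoot into the regeneration constant $R_{\theta_b}$), lower-bounds it by $\KLb{\Prtheta(\calE)}{\Prlambda(\calE)}$ via the data-processing inequality with $\calE = \{N_b(T) \ge \sqrt{T}\}$, and reads off the bound in expectation directly. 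Your route needs the almost-sure convergence $\calL_n/n \to \KL{\theta_b}{\lambda}$ together with maximal control of the trajectory, and in exchange delivers the slightly stronger high-probability lower bound on $N_b(T)$; the paper's route needs only the expectation identity and no uniform control of the likelihood ratio, which is why it is shorter. Both yield the same constant, and both reductions to the regret statement (dropping the nonnegative top-$N$ terms and the $O(1)$ correction of \autoref{lem:regret-proxy}) coincide.

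One small wrinkle you should repair in a full writeup: the event $\{N_b(T) < C_T\} \cap G_T$ is not $\calF_T$-measurable, since $G_T$ refers to samples $X_n^b$ with $N_b(T) < n \le C_T$ that have not been observed by time $T$, whereas your change-of-measure identity is stated for $A \in \calF_T$. The standard fix is to apply the identity to $A = \{N_b(T) < C_T,\ \calL_{N_b(T)} \le (1-\delta/2)\log T\} \in \calF_T$ and to bound the probability of the complementary event $\{N_b(T) < C_T,\ \calL_{N_b(T)} > (1-\delta/2)\log T\}$ by $\Pr_{\theta_b}(G_T^c)$, which is where the maximal strong law actually enters. With that rearrangement the proof is sound.
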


Lower bounds on the expected regret of multi-armed bandit problems
are established using a change of measure argument,
which relies on the adaptive allocation rule being uniformly good.
\cite{Lai-Robbins-85} gave the prototypical change of measure argument, for the case of i.i.d. rewards,
and \cite{Ananth-Varaiya-Walrand-II-87} extended this technique for the case of Markovian rewards.
Here we give an alternative simplified proof using the data processing inequality, an idea introduced in~\cite{Kaufmann-Cappe-Garivier-16,Combes-Proutiere-14} for the i.i.d. case.

We first set up some notation.
Denote by $\calF_T$ the $\sigma$-field generated by the random variables $\phi_1, \ldots, \phi_T,
 \{X_n^1\}_{n=0}^{N_1 (T)}, \ldots, \{X_n^K\}_{n=0}^{N_K (T)}$,
and let $\Pr_\btheta^\bphi \mid_{\calF_T}$ be the restriction of the probability distribution $\Pr_\btheta^\bphi$ on $\calF_T$.
For two probability distributions $\Pr$ and $\Qr$ over the same measurable space we define the \emph{Kullback-Leibler divergence} between $\Pr$ and $\Qr$ as
\[
\KL{\Pr}{\Qr} =
\begin{cases}
\E_{\Pr} \left[\log \frac{d \Pr}{d \Qr} \right], &\text{if}~ \Pr \ll \Qr, \\
\infty, &\text{otherwise},
\end{cases}
\]
where $\frac{d \Pr}{d \Qr}$ denotes the Radon-Nikodym derivative, when $\Pr$ is absolutely continuous with respect to $\Qr$.
Note that we have used the same notation as for the
Kullback-Leibler divergence rate between two Markov chains,
but it should be clear from the arguments whether we refer to the divergence or the divergence rate.
For $p,q \in [0, 1]$, the \emph{binary Kullback-Leibler divergence} is denoted by
\[
\KLb{p}{q} = p \log \frac{p}{q} + (1-p) \log \frac{1-p}{1-q}.
\]
The following lemma, from~\cite{Moulos19-bandits-identification}, will be crucial in establishing the lower bound.
\begin{lemma}[Lemma 1 in~\cite{Moulos19-bandits-identification}]\label{lem:KL-bound}
Let $\btheta, \blambda \in \Theta^K$ be two parameter configurations. 
Let $\tau$ be a stopping time with respect to $(\calF_t)_{t \in \Intpp{}}$, with $\Etheta [\tau], ~ \Elambda [\tau] < \infty$.
Then
\[
\KL{\Prtheta \mid_{\calF_\tau}}{\Prlambda \mid_{\calF_\tau}}
\le
\sum_{a=1}^K \KL{q_{\theta_a}}{q_{\lambda_a}} +
\sum_{a=1}^K \left(\Etheta [N_a (\tau)] + R_{\theta_a}\right) \KL{\theta_a}{\lambda_a},
\]
where $R_{\theta_a} = \E_{\theta_a}\: \left[\inf \{n \ge 1 : X_{n+1}^a = X_ 1^a\}\right] < \infty$, the first summand is finite due to~\eqref{eqn:init}, and the second summand is finite due to~\eqref{eqn:supp}.
\end{lemma}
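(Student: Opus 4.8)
The plan is to compute the Radon--Nikodym derivative $\frac{d\Prtheta}{d\Prlambda}$ restricted to $\calF_\tau$ explicitly and then take its logarithmic expectation under $\Prtheta$. Since the allocation rule $\bphi$ is the same non-anticipating functional of the observed history under both configurations, and since the $K$ arms evolve independently, the trajectory likelihood factorizes across arms into an initial-distribution factor times a product of one-step transition factors, with the allocation probabilities appearing identically in numerator and denominator, so they cancel in the ratio. Using the support conditions~\eqref{eqn:supp} and~\eqref{eqn:init} to guarantee absolute continuity, I would obtain
\[
\log \frac{d \Prtheta}{d \Prlambda}\Big|_{\calF_\tau}
= \sum_{a=1}^K \log\frac{q_{\theta_a}(X_0^a)}{q_{\lambda_a}(X_0^a)}
+ \sum_{a=1}^K \sum_{n=1}^{N_a(\tau)} \log\frac{P_{\theta_a}(X_{n-1}^a,X_n^a)}{P_{\lambda_a}(X_{n-1}^a,X_n^a)}.
\]
Taking $\Etheta$ of the first group of terms immediately produces $\sum_{a} \KL{q_{\theta_a}}{q_{\lambda_a}}$, the first summand of the claimed bound.

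The core of the argument is to control, for each arm $a$, the quantity $\Etheta\big[\sum_{n=1}^{N_a(\tau)} g_a(X_{n-1}^a,X_n^a)\big]$, where $g_a(x,y)=\log\frac{P_{\theta_a}(x,y)}{P_{\lambda_a}(x,y)}$ and the number of summands $N_a(\tau)$ is itself random. First I would pass to the compensator: writing $h_a(x)=\sum_y P_{\theta_a}(x,y)g_a(x,y)=\KL{P_{\theta_a}(x,\cdot)}{P_{\lambda_a}(x,\cdot)}\ge 0$, the process $\sum_{k=1}^n [g_a(X_{k-1}^a,X_k^a)-h_a(X_{k-1}^a)]$ is a martingale for arm $a$'s chain, so optional stopping (legitimate since $N_a(\tau)\le\tau$ has finite expectation and the increments are bounded, the state space being finite with common support) replaces the transition sum by $\Etheta\big[\sum_{n=0}^{N_a(\tau)-1} h_a(X_n^a)\big] = \sum_{x\in S} h_a(x)\,\Etheta[\text{visits to } x \text{ before local time } N_a(\tau)]$. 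The remaining non-stationary state-visit counts are precisely what \autoref{lem:renewal} controls: applied to arm $a$'s chain with stopping time $N_a(\tau)$, it gives $\Etheta[\text{visits to }x] \le \pi_{\theta_a}(x)\Etheta[N_a(\tau)]+R_{\theta_a}$ (up to a one-step shift of the indexing that the $R_{\theta_a}$ slack absorbs). Summing against the nonnegative weights $h_a(x)$ and using $\sum_x \pi_{\theta_a}(x)h_a(x)=\KLr{\theta_a}{\lambda_a}$ collapses the leading term to $\Etheta[N_a(\tau)]\,\KLr{\theta_a}{\lambda_a}$ and bounds the correction by $R_{\theta_a}$ times a divergence constant, after which summing over $a$ assembles the stated inequality.

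The main obstacle is a measurability subtlety hidden in the phrase ``apply \autoref{lem:renewal} with stopping time $N_a(\tau)$'': the global stopping time $\tau$, and hence $N_a(\tau)$, couples all $K$ arms through the allocation rule, so $N_a(\tau)$ is \emph{not} a stopping time for arm $a$'s own filtration $\sigma(X_0^a,\dots,X_n^a)$ in isolation. The resolution I would invoke is the independence of the arms together with the non-anticipating nature of $\bphi$: realizing all $K$ chains in advance, arm $a$'s chain remains a Markov chain with transition matrix $P_{\theta_a}$ with respect to the enlarged filtration that also carries the other arms and the allocation clock, and $N_a(\tau)$ \emph{is} a stopping time for this enlarged filtration, which is exactly the setting where both the optional stopping above and \autoref{lem:renewal} remain valid --- the same device already used implicitly when \autoref{lem:renewal} is invoked with $\tau=N_a(T)$ in the proof of \autoref{lem:regret-proxy}. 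A secondary, purely cosmetic point is that the crude per-state renewal bound yields a correction of the form $R_{\theta_a}\sum_x h_a(x)$ rather than the sharper $R_{\theta_a}\,\KLr{\theta_a}{\lambda_a}$ written in the statement; since this is an additive $O(1)$ term irrelevant to the $\log T$ asymptotics of the lower bound, I would either absorb it up to a constant or recover the exact form via a regenerative-cycle argument in which $g_a$ is averaged against the stationary edge measure $\pi_{\theta_a}(x)P_{\theta_a}(x,y)$ over a full return cycle.
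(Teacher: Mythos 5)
The paper does not prove this lemma; it is imported verbatim from \cite{Moulos19-bandits-identification}, so there is no in-paper proof to compare against. Your reconstruction follows the natural (and, judging from the cited source, the intended) route: factorize the stopped likelihood ratio across arms with the allocation probabilities cancelling, replace the edge-sum by its compensator $h_a(x)=\KL{P_{\theta_a}(x,\cdot)}{P_{\lambda_a}(x,\cdot)}\ge 0$ via optional stopping, and control the resulting state-visit counts with \autoref{lem:renewal}. You also correctly isolate and resolve the one genuine subtlety, namely that $N_a(\tau)$ is a stopping time only for the enlarged filtration carrying all $K$ chains and the allocation clock, not for arm $a$'s own filtration; the compensator step is needed precisely because $g_a(x,y)=\log\frac{P_{\theta_a}(x,y)}{P_{\lambda_a}(x,y)}$ is not sign-definite while $h_a$ is. The only loose end is the one you flag yourself: applying \autoref{lem:renewal} state by state yields a correction $R_{\theta_a}\sum_{x}h_a(x)$ rather than the stated $R_{\theta_a}\,\KLr{\theta_a}{\lambda_a}=R_{\theta_a}\sum_x\pi_{\theta_a}(x)h_a(x)$, which is genuinely smaller, so your main-line argument proves a weaker inequality than the one displayed. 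Your proposed repair is the right one: extend $N_a(\tau)$ to the next regeneration time $\tilde N\ge N_a(\tau)$, use $h_a\ge 0$ to bound the partial sum by the full-cycle sum, apply the cycle formula $\Etheta\bigl[\sum_{n<\tilde N}h_a(X_n^a)\bigr]=\Etheta[\tilde N]\,\KLr{\theta_a}{\lambda_a}$, and bound $\Etheta[\tilde N]\le\Etheta[N_a(\tau)]+R_{\theta_a}$; this recovers the exact constant. In any case the discrepancy is an additive $O(1)$ term and is immaterial for the only use of the lemma here, the $\log T$ asymptotics in \autoref{thm:lower-bound}.
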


\begin{proof}[Proof of~\autoref{thm:lower-bound}.] \hfill\break
Fix $b \in \{L+1, \ldots, K\}$,
and $\epsilon > 0$.
Due to~\autoref{eqn:dense} and~\autoref{eqn:cts},
there exists $\lambda \in \Theta$ such that
\[
\mu (\theta_M) < \mu (\lambda) , ~\text{and}~
|\KL{\theta_b}{\theta_M} - \KL{\theta_b}{\lambda}| < \epsilon.
\]
We consider the parameter configuration
$\blambda = (\lambda_1, \ldots, \lambda_K)$ given by,
\[
\lambda_a =
\begin{cases}
\theta_a, & \text{if}~ a \neq b, \\
\lambda, & \text{if}~ a = b.
\end{cases}
\]
Using~\autoref{lem:KL-bound} we obtain,
\[
\KL{\Pr_\btheta^\bphi \mid_{\calF_T}}{\Pr_\blambda^\bphi \mid_{\calF_T}}
\le
\KL{q_{\theta_b}}{q_\lambda} + R_{\theta_b} \KL{\theta_b}{\lambda} +
\E_\btheta^\bphi \left[N_b (T)\right] \KL{\theta_b}{\lambda}.
\]
From the data processing inequality, see the book of~\cite{CovThom06}, we have that for any 
event $\calE\in\calF_T$,
\[
\KLb{\Pr_\btheta^\bphi (\calE)}{\Pr_\blambda^\bphi (\calE)} \le
\KL{\Pr_\btheta^\bphi \mid_{\calF_T}}{\Pr_\blambda^\bphi \mid_{\calF_T}}.
\]
We select $\calE = \{N_b (T) \ge \sqrt{T}\}$.
Then using Markov's inequality, and the fact that $\bphi$
is uniformly good we obtain for any $\alpha > 0$,
\[
\Pr_\btheta^\bphi (\calE)
\le \frac{\E_\btheta^\bphi [N_b (T)]}{\sqrt{T}}
= \frac{o (T^\alpha)}{\sqrt{T}}, \quad
\Pr_\blambda^\bphi (\calE^c)
\le \frac{\E_\blambda^\bphi [T - N_b (T)]}{T-\sqrt{T}}
= \frac{o (T^\alpha)}{T-\sqrt{T}}.
\]
Using those two inequalities we see that,
\[
\liminf_{T \to \infty}
\frac{\KLb{\Pr_\btheta^\bphi (\calE)}
{\Pr_\blambda^\bphi (\calE)}}
{\log T}
=
\liminf_{T \to \infty}
\frac{\log \frac{1}{\Prlambda (\calE^c)}}
{\log T}
\ge 
\lim_{T \to \infty}
\frac{\log \frac{T-\sqrt{T}}{o (T^\alpha)}}{\log T} = 1.
\]
Therefore,
\[
\liminf_{T \to \infty} \frac{\E_\btheta^\bphi [N^b (T)]}{\log T} \ge 
\frac{1}{\KL{\theta_b}{\lambda}} \ge 
\frac{1}{\KL{\theta_b}{\theta_M} + \epsilon},
\]
and the first part of~\autoref{thm:lower-bound} follows by letting $\epsilon$ go to $0$. The second part follows from~\autoref{lem:regret-proxy}, and~\autoref{eqn:regret-proxy}.
\end{proof}
\end{appendices}

\end{document}